\newcommand{\wwidehat}[1]{\widehat{\widehat{#1}}}
\newtheorem{theorem}{Theorem}[section]
\newtheorem{remark}{Remark}
\newtheorem{assumption}{Assumption}
\newtheorem{corollary}{Corollary}[theorem]
\newtheorem{lemma}[theorem]{Lemma}
\theoremstyle{definition}
\newtheorem{definition}{Definition}[section]
\newcommand{\norm}[1]{\left\lVert#1\right\rVert}
\newcommand{\tildehat}[1]{\tilde{\widehat{#1}}}
\newenvironment{itquote}
{\begin{quote}\itshape}
	{\end{quote}\ignorespacesafterend}
\numberwithin{equation}{section}
\theoremstyle{plain}
\begin{document}

\begin{frontmatter}
\title{Model-free Bootstrap for a General Class of  Stationary Time Series}
\runtitle{Model-free Bootstrap for    Stationary   Series}

\begin{aug}
\author{\fnms{Yiren} \snm{Wang}\ead[label=e1]{yiw518@ucsd.edu}},
\author{\fnms{Dimitris} \snm{N. Politis}\ead[label=e2]{dpolitis@ucsd.edu}}
\affiliation{University of California, San Diego}
\runauthor{Y. Wang and D.N. Politis}
\address{Department of Mathematics\\
	University of California, San Diego\\
	La Jolla, CA 92093-0112, USA\\
	email: {\normalfont \color{blue} yiw518@ucsd.edu; dpolitis@ucsd.edu}
	}

\end{aug}

\begin{abstract}
	A model-free bootstrap procedure   for a general class of stationary time series
is introduced.  The theoretical framework is established, showing asymptotic validity of bootstrap confidence intervals for many statistics of interest. In addition,   asymptotic validity of  one-step ahead 
bootstrap prediction intervals is also demonstrated. Finite-sample experiments   are  
conducted to empirically confirm the performance of the new method, and to compare with 
 popular methods such as the block bootstrap and the autoregressive (AR)-sieve bootstrap.
\end{abstract}
\end{frontmatter}

\section{Introduction}
 The bootstrap, since its introduction by \cite{efron1979}, has been an invaluable tool for statistical inference
with independent data. 
Resampling for time series has also been a flourishing topic since the  late 1980s.
However, there is a plethora of ways to resample a stationary time series.
It is always important to validate the correctness of such bootstrap procedures, i.e. show their asymptotic validity and range of applicability with respect to common statistics. These problems have been well studied for popular methods like the block bootstrap and the autoregressive (AR)-sieve bootstrap.
For a summary of the state-of-the-art,  see \cite{McElroyPolitis2019}
and \cite{KreissPaparoditis2020}. 

 In a dependent setup, the main purpose of bootstrap is two-fold: one is to obtain confidence intervals for a parameter of interest and/or conduct a hypothesis test.
Another important aspect of time series analysis  is forecasting. A standard setup is the following: given the time series data $\{Y_t\}_{t =1 }^n$,  the goal is $h$-step ahead  prediction, i.e., predicting  
$Y_{t+h}$ for some integer $h\geq 1$. 

An optimal 
 $h$-step ahead point predictor $\widehat{Y}_{t+h}$ should minimize the expected loss between the true   $Y_{t+h}$ and itself, conditioned on the current data $\{Y_1,\cdots,Y_n\}$. The most widely used loss functions are $L^1$ and $L^2$. The $L^2$ loss,  $E\left(\left(\widehat{Y}_{t+h} - Y_{t+h}\right)^2 | Y_1,\cdots, Y_n\right)$ is minimized by $\widehat{Y}_{t+h} = E(Y_{n+h}|Y_1,\cdots Y_n)$. The  $L^1$ loss,  $E\left(| \widehat{Y}_{t+h} - Y_{t+h}|  | Y_1,\cdots, Y_n\right)$ is minimized by
the conditional median $med(Y_{t+h}|Y_1,\cdots,Y_n)$  instead.

 Besides point predictors, prediction intervals and joint prediction regions are quite useful; since any point predictor will invariably incur an error, it is important to provide a range of values where the 
future point $Y_{t+h}$ will be found with high probability.   Prediction intervals can be
  constructed by approximating the distribution of the so-called {\it predictive root}, i.e. $Y_{t+h} - \widehat{Y}_{t+h}$, and using the respective quantiles to produce upper and lower bounds. 
Approximating this  distribution typically requires one to fit a specific model to the data, which enables
a model-based resampling for $Y_{t+h}$ and $\widehat{Y}_{t+h}$ separately; see 
\cite{PanPolitis2016} for a review. 

However, model-fitting and prediction are two separate notions with very different objective functions.
Cross-validation ideas that are currently popular attempt to link the two notions, in choosing a model that 
is actually good for predictive purposes. 
Nevertheless, it is possible for the practitioner to proceed directly to prediction without the
intermediary step of model-fitting; this is the essence of the {\it model-free} prediction principle
of  \cite{Politis2013}, \cite{Politis2015}. To describe it, the goal is to find an invertible transformation that  transforms 
the data vector  $(Y_1, \ldots, Y_n)^\prime$  to a new data vector  $(e_1, \ldots, e_n)^\prime$ 
whose entries are independent, identically distributed (i.i.d.).
One can then employ the i.i.d.~bootstrap on 
 the $e_1, \ldots, e_m$ to generate  $e_1^*, \ldots, e_m^*$, 
and  use the inverse transform to get bootstrap samples $Y_1^*, \ldots, Y_m^*$  in
the domain of  the original data. 
Using $m=n$ is the standard framework for estimation and confidence intervals;
interestingly, using $m=n+h$ allows us to equally address the problem of
forecasting  $Y_{t+h}$ with prediction intervals. 

 Under regularity conditions, such a transformation  always exists but is not unique; 
see Ch. 2.3.3 of \cite{Politis2015}.
The challenge for the practitioner is to use the   structure of the data at hand in order to devise
a   transformation that works in the given setting, having features that can be estimated from the data.
In a model-based approach, these steps are analogous to choosing a model, and then fitting the model
using the data. Indeed, any model driven by i.i.d.~errors can be used to define a transformation of the data 
towards the i.i.d.~target; however,  the power of the model-free approach is that it can work
without restricting oneself to a model equation.  

To elaborate,  if the data arise as a stretch of a strictly stationary time series $\{Y_t\}$ 
with (absolutely) continuous distributions, then the Rosenblatt transformation (\cite{rosenblatt1952}) can
be used to transform  $ Y_1, \ldots, Y_n $ to a set of $n$ i.i.d.~Uniform random variables. 
In general, this application of the  Rosenblatt transformation can not be implemented in practice because
it involves $n$ unknown conditional distribution functions. However, if additional structure is assumed, 
e.g., when $\{Y_t\}$  stationary Markov sequence,
then this approach is feasible; see \cite{PanPolitis2016b} and Ch. 8 of \cite{Politis2015}.

To describe a different approach, recall
the Linear Process Bootstrap (LPB) of \cite{mcmurrypolitis2010} which essentially transforms the 
the  data vector  $(Y_1, \ldots, Y_n)^\prime$  to a   data vector  $(W_1, \ldots, W_n)^\prime$
that has uncorrelated entries, i.e., $\{W_t\}$ is a  `white noise'.
If $\{Y_t\}$  is a linear time series, then $\{W_t\}$ can further be claimed to be i.i.d.~(under some
conditions).   The LPB has parallels with the 
AR-sieve bootstrap since both are applicable to nonlinear time series as long as the
statistic of interest has a large-sample distribution   that only depends on the
first and second order moment structure of the data; see \cite{kreiss2011}
and \cite{Jentsch}.

Nevertheless,  in the search of
a transformation that renders the data i.i.d., it may be helpful to first devise a
transformation into Gaussianity; see e.g., Ch. 2.3.2 of \cite{Politis2015}.
For example, we can use a version of the Probability Integral Transform (PIT) in order 
to transform our time series data to Gaussian; the latter can then be transformed to i.i.d.~by
a decorrelating/whitening operation as in the LPB. This approach was first suggested in 
Ch. 9 of \cite{Politis2015}, and was practically implemented to the setting of
a locally stationary  time series by \cite{Das2017}.

In the paper at hand, we focus on  stationary  time series data, with the goal of 
establishing the realm of applicability  of the above mentioned procedure
which, for lack of a better word, we will call the {\it model-free bootstrap} (MFB). 
We will show   asymptotic validity of the MFB for a general class of stationary processes, and
for many types of statistics of interest. We will also establish MFB's validity for the
construction of one-step-ahead prediction intervals, i.e., to fix ideas we will focus on the
case $h=1$ in the above.

The remaining of the paper is organized as follows. Section 2 restates the MFB algorithm 
carefully. Section 3 introduces some necessary tools and assumptions to be used,    and summarizes some useful preliminary results for our proofs. Section 4 proves MFB's asymptotic validity 
for various estimation problems, while Section 5 shows its  validity for prediction
intervals. Numerical experiments that back up our asymptotic results are presented in Section 6.
Technical proofs are given in the Appendix.

\section{Model-free bootstrap algorithm}
\subsection{The MFB algorithm}
Here we describe the model-free bootstrap (MFB) algorithm for inference and prediction as  
proposed in Chaper 9 of \cite{Politis2015}. 
Given a time series $\{Y_t\}_{t \in \mathbb{Z}}$ that is strictly stationary, let $F_Y$ be the cumulative distribution function (CDF) of $Y_0$. The PIT defined by
$$U_t = F_Y(Y_t)$$
implies that $U_t$
is uniformly distributed on $[0,1]$, assuming $F_Y$ is continuous. See \cite{PITresult}. Let $\Phi$ be the CDF of standard normal distribution, and $\Phi^{-1}(p) = \inf\{x\in\mathbb{R}: \Phi(x) \geq p\}$ is the quantile function;
  then, $Z_t = \Phi^{-1}(U_t)$ is $\mathcal{N}(0,1)$ distributed. Also, stationarity is preserved for $\{U_t\}$ and $\{Z_t\}$. 

Let $\Sigma_n$ denote the covariance matrix of $\underline{Z}_n = (Z_1,\cdots,Z_n)$, and denote by 
$\Sigma_n^{-\frac{1}{2}}$   the lower triangular matrix from the Cholesky decomposition of $\Sigma_n^{-1}$.
Then, $\underline{\xi}_n = \Sigma_n^{-\frac{1}{2}}\underline{Z}_n$  is a vector of
i.i.d. $\mathcal{N}(0,1)$ entries, provided $Z_1,\cdots,Z_n$ are {\it jointly} normal. 

   Suppose we use a resampling scheme 
to  create  the i.i.d.~bootstrap sample  $ \xi_1^{*},\xi_2^{*},\cdots,\xi_n^{*} $.
Then,  letting $\underline{Z}_n^* = \Sigma_n^{\frac{1}{2}}\underline{\xi}_n^*$
where $\underline{\xi}_n^*=(\xi_1^{*},\xi_2^{*},\cdots,\xi_n^{*})^\prime $,
 and $Y_t^* = F_Y^{-1}\left(\Phi(Z_t^*)\right)$,  then $\{Y_t^*\}$ is our bootstrapped sample.  

Moreover, $\xi_{n+1}^*$ can also be generated through i.i.d. sampling, and $Z_{n+1}^*$ can be generated through the relation $(\underline Z_n, Z_{n+1}^*) = \Sigma_{n+1}^{1/2}(\underline\xi_n,\xi_{n+1}).$ Using the inverse of the previously mentioned transforms, the next bootstrap value can be generated by $Y_{n+1}^* = F_Y^{-1}(\Phi(Z_{n+1}^*))$.  It can be shown that by using   these theoretical transforms, $Y_{n+1}^*|\underline{Y}_n$ has the same distribution as $Y_{n+1}|\underline{Y}_n$.
 
Nevertheless, to use the above steps for practical purposes,  each transform must also be estimated 
in a consistent manner from the data at hand. Furthermore, the validity of the bootstrap procedure has to be investigated, both for estimation and prediction. Thus, several questions arise:
\begin{itemize}
	\item Under what circumstance are the entries of $\underline{Z}_n$ jointly normal?
	\item What estimators for $F_Y$ and $\Sigma_n$ should we use so that the above steps lead to validity of the  bootstrap?
	\item How should we create the i.i.d. bootstrap values $\{\xi_t^*\}$?
\end{itemize}
The first two points will be  addressed in the following paragraphs. For the third point, \cite{Politis2015} has  proposed two ways to do it. One way is sampling with replacement from $\{\widehat{\xi}_t\}_{t=1}^n$, with $\widehat{\xi}_t$ calculated from $Y_t$  using estimated transform functions. A
second way is to 
generate $\xi_t^*$ as i.i.d. $\mathcal{N}(0,1)$, which is presumably the limiting distribution of $\widehat{\xi}_t$. The first method is called \textbf{model-free} (MF), and the second is referred to as \textbf{limit model-free} (LMF) since the limit distribution is used. 

Frequently used notations include the following.
 Let $\widehat{F}$  and $\widehat{\Sigma}_n$ denote general estimators for $F_Y$ and $\Sigma_n$ respectively. The subscript $Y$ is dropped from $\widehat F$ for simplicity. $\Phi$ is the CDF of a standard normal distribution with $\Phi^{-1}$ its quantile function. Let $\tilde{\Phi}$ be the CDF of a thresholded standard normal distribution: suppose $X \sim \mathcal{N}(0,1)$, $X_c = X$ for $|X| \leq c$ and $X_c = sgn(X)c$ for $|X|>c$, where $sgn(\cdot)$ is the sign function. Then $\tilde\Phi$ denotes the CDF of $X_c$ and its inverse $\tilde \Phi^{-1}$ the quantile function. We omit $c$ in the notation for simplification.  Asymptotically we also require $c\rightarrow\infty$ such that $\tilde\Phi^{-1}$ converges to $\Phi^{-1}$. The reason of this augmentation is provided in Section 2.4 and asymptotic details are explained in Section 3. 
 
 By using these practical transforms, we can calculate $\widehat U_t = \widehat{F}(Y_t)$,
and  $\tilde{\widehat{Z}}_t = \tilde{\Phi}^{-1}(\widehat U_t)$, which are the estimations for the latent series $\{U_t\}$ and $\{Z_t\}$ respectively. Since $\{Z_t\}$ is 
latent,  $\widehat{\Sigma}_n$ can not be directly calculated. Instead, we use  $\wwidehat{\Sigma}_n$ which is the same estimator calculated based on $\{\tilde{\widehat{Z}}_t\}$.

 Let $\sigma_Z(k) = E Z_0Z_k$ be the lag-{\it k} autocovariance 
 of $Z_t$, $\widehat{\sigma}_Z(k)$ be its estimator 
 and $\wwidehat{\sigma}_Z(k)$ be the estimator calculated from $\tildehat{Z}_t$. Let $\norm{\cdot}_p = E(|\cdot|^p)^{\frac{1}{p}}$ denote the $p-norm$ of a random variable; $\norm{\cdot}_{op}$ denotes the operator norm of a matrix,
i.e., $\norm{M}_{op} = \sup_{x\in\mathbb{R}^n, \norm{x}_2 = 1} \norm{Mx}_2$
where  $M$ is a $n\times n$ square matrix.   
Relative quantities in the bootstrap world will be denoted by a superscript $*$. 
 
Given the above introduction, we can now describe the model-free bootstrap algorithm. 

\subsection{MFB for confidence intervals} Let $\theta_0$ be a population parameter of interest; $\widehat\theta_n$ an estimator of $\theta_0$ from data $\{Y_t\}_{t=1}^n$, and $\widehat\theta_n^*$ the  same estimator from bootstrapped data $\{Y_t^*\}_{t=1}^n$. 
Define the real-world {\it root}  $r = \theta_0 -  \widehat\theta_n$;
let $R$ denote its CDF. Then,  a $(1-\alpha)100\%$
equal-tailed confidence interval (CI) for $\theta_0$ is 
$$(\widehat \theta_n +   R^{-1}(\alpha/2), \widehat\theta_n +   R^{-1}(1-\alpha/2)),$$
 where  $  R^{-1}(x) = \inf\{r\in\mathbb{R}:  R(r) \geq x\}$ denotes the  quantile function of $  R$. The distribution $R$ could be approximated through bootstrap simulations.
\\

{\small\textbf{Algorithm 1} (Model-free   bootstrap for parameter inference)}
\begin{itquote}
\begin{enumerate}
	\item Given data $\{Y_t\}_{t=1}^n$,  let $\widehat U_t = \widehat{F}(Y_t)$;  $\tilde{\widehat{Z}}_t = \tilde{\Phi}^{-1}(\widehat U_t)$;
	$\widehat{\underline\xi}_n = \wwidehat{\Sigma}_{n}^{-\frac{1}{2}}\tilde{\widehat{\underline Z}}_n$.
	\item (MF) Let $\xi^*_{t}$ be i.i.d. samples from $\bar F_{\widehat\xi}$, where $\bar F_{\widehat\xi}$ is the empirical CDF of $\{\widehat\xi_t\}_{t=1}^n$, and $\underline Z_n^* = \wwidehat{\Sigma}_{n}^{\frac{1}{2}} \underline \xi^*_n$. Let $Y_t^* = \widehat{F}^{-1}(\Phi(Z_t^*))$. Calculate the bootstrap root $r^* =  \widehat{\theta}_n - \widehat\theta^*_n $.

	\item Do the above step $B$ times to form an empirical CDF $\Bar{R}$ based on the
$B$ replicates of $r^*$.  $\Bar{R}$ is used to approximate $R$; hence, 
	an approximate $(1-\alpha)100\%$ CI for $\theta_0$ is 
$$(\widehat\theta_n + \Bar R^{-1}(\alpha/2), \widehat\theta_n + \Bar R^{-1}(1-\alpha/2)).$$
\end{enumerate}
\end{itquote}
For limit model-free (LMF) bootstrap, replace   step 2  with following:\\
\begin{itquote}
2.\:\:(LMF) Let $\xi^*_{t}$ be i.i.d. samples from $\mathcal{N}(0,1)$, and $\underline Z_n^* = \wwidehat{\Sigma}_{n}^{\frac{1}{2}} \underline \xi^*_n$. Let $Y_t^* = \widehat{F}^{-1}(\Phi(Z_t^*))$. Calculate the bootstrap root $r^* =  \widehat{\theta}_n - \widehat\theta^*_n $.
\end{itquote}
As an alternative,   it may be easier to do:
\begin{itquote}
Let $\underline Z_n^* = \mathcal{N}(\mathbf{0},\wwidehat{\Sigma}_{n})$.
\end{itquote}
in order to omit finding the Cholesky decomposition of the covariance matrix $\wwidehat\Sigma_n$. (However in practice, finding the Cholesky decomposition of a covariance matrix is required for generating multivariate normal samples.)
\\

Below are some simple examples of statistics of interest.
\begin{itemize}
		\item The mean: $\theta_0 = E(Y_0)$; $\widehat{\theta}_n =\frac{\sum_{t=1}^n Y_t}{n}$. 
		\item Autocovariance: $\theta_0 = \gamma_Y(k)$; $\widehat{\theta}_n = \widehat{\gamma}(k) =  \frac{1}{n}\sum_{t=1}^{n-k} (Y_t - \bar Y_n)(Y_{t+k} - \bar Y_n)$.	
		\item Autocorrelation: $\theta_0 = \rho(k) = \frac{\gamma(k)}{\gamma(0)}$;  $\widehat{\rho}(k) = \frac{\widehat\gamma(k)}{\widehat\gamma(0)}$.
\end{itemize}
Additional examples   will be addressed in Section 4. 

\subsection{MFB for prediction intervals} For prediction problems, we want to use the  bootstrap to simulate the distribution of $Y_{n+1}$ \textbf{conditional} on past values $\{Y_t\}_{t=1}^n$.  For this purpose, we use bootstrap to approximate the conditional distribution of the predictive root $Y_{n+1} - \widehat Y_{n+1}$, where $\widehat Y_{n+1}$ is a predictor chosen by the practitioner. Let $G$ denote the conditional distribution of the predictive root defined above. Then a $(1-\alpha)100\%$ equal tailed prediction interval for $Y_{n+1}$ is :
$$(\widehat Y_{n+1} + G^{-1}(\alpha/2), \widehat Y_{n+1} + G^{-1}(1-\alpha/2))$$
Model-free bootstrap algorithm for one step ahead prediction is the following:\\

{\small \textbf{Algorithm 2} (Model-free bootstrap for 1-step ahead prediction)}
\begin{itquote}
	\begin{enumerate}
		\item Given data $\{Y_t\}_{t=1}^n$,  let  $\widehat{U}_t = \widehat F(Y_t)$, $\tildehat Z_t = \Tilde{\Phi}^{-1}(\widehat U_t)$.
		\item 
		Denote $\wwidehat\Sigma_{11} = \wwidehat\Sigma_{n}$, 
		$\wwidehat\Sigma_{12} = \begin{bmatrix}
		\wwidehat\sigma(n)\\ \vdots\\ \wwidehat\sigma(1)
		\end{bmatrix}$,
		 and $\wwidehat\Sigma_{22} = \wwidehat\sigma(0)$.  
		 Let $\wwidehat{\Sigma}_{n+1} = \begin{bmatrix}
		\wwidehat{\Sigma}_{11} & \wwidehat{\Sigma}_{12}\\
		\wwidehat{\Sigma}_{12}^T & \wwidehat{\Sigma}_{22}
		\end{bmatrix}$.
		Let $\{\xi^*_{t}\}_{t=1}^{n+1}$ be drawn randomly with replacement from $\{\widehat\xi_t\}_{t=1}^n$,
		 and $\underline Z_{n}^* = \wwidehat{\Sigma}_{n}^{\frac{1}{2}} \underline \xi^*_n$.
		Let $Z_{n+1}^*$ be the $(n+1)^{th}$ element of the vector 
		$\wwidehat{\Sigma}_{n+1}^{\frac{1}{2}}(\widehat\xi_n,\xi_{n+1}^*)$. 
		Denote the distribution of $Z_{n+1}^*$ as $\widehat{F}_Z^{(n+1)}$. This is also the estimated conditional distribution of $Z_{n+1}|\underline Y_n$.
		The form of this distribution is conditional on our data $\underline Y_n = (Y_1,\cdots,Y_n)$.
 Let $Y_{n+1}^* = \widehat{F}^{-1}(\Phi(Z_{n+1}^*))$. 
		\item Choose a predictor $\widehat Y_{n+1}$ for $Y_{n+1}$ based on $\underline Y_n $.
 For example, the $L^2$ optimal predictor as mentioned in Section 1 is the  expectation of $Z_{n+1}$ conditioning on $\underline Y_n$
that can be approximated by		
		$$\widehat{Y}_{n+1} = \int \widehat{F}^{-1}(\Phi(z)) d\widehat{F}_Z^{(n+1)}(z).$$
		 The above integral can be evaluated through Monte-Carlo simulation. The chosen predictor will be used as the center of our prediction interval, and the bootstrap procedure will be used to capture the distribution of the predictive root in the next steps.
		
		\item Re-estimate all the transforms, matrices and the distribution $\widehat{F}_Z^{(n+1)}$ used in the above calculation, with bootstrapped data $\underline Z_n^{*} = (Z_1^*,\cdots,Z_n^*)$ and $Y_t^* = \widehat{F}^{-1}(\Phi(Z_{t}^*))$. Let $(\widehat{F}_Z^{(n+1)})^*$ denote the re-estimated distribution function for $Z_{n+1}^*$ with bootstrap data $\underline Y_n^* = (Y_1^*,\cdots,Y_n^*)$.
		 Let $(Z_{n+1}|Y_n)^*$ denote  the random variable with estimated conditional distribution $(\widehat{F}_Z^{(n+1)})^*$.
		 
		 Let $\widehat Y_{n+1}^*$ denote the one step ahead predictor with re-estimated transforms based on the bootstrap pseudo data. In the $L^2$-optimal setting, 	$\widehat{Y}_{n+1}^* = \int (\widehat{F}^*)^{-1}(\Phi(z)) d(\widehat{F}_Z^{(n+1)})^*(z)$.
		\item The bootstrapped $L^2-$ optimal predictive root is:
		$$Y_{n+1}^* -\widehat Y_{n+1}^* $$

		\item Denote the empirical CDF of bootstrapped predictive roots as $\bar{G}$. The
approximate  $(1-\alpha)$ prediction interval for $Y_{n+1}$ is
		$$\left(\widehat Y_{n+1} + \bar G^{-1}(\alpha/2),\widehat Y_{n+1} +  \bar G^{-1}(1 - \alpha/2)\right). $$
		
	\end{enumerate}
\end{itquote}

{\small \textbf{Algorithm 3} (Limit model-free bootstrap for 1 step ahead prediction)}
\begin{itquote}
\begin{enumerate}
	\item Given data $\{Y_t\}_{t=1}^n$,  let $\widehat{U}_t = \widehat F(Y_t)$, $\tildehat Z_t = \Tilde{\Phi}^{-1}(\widehat U_t)$.
	\item (LMF)  
	Denote $\wwidehat\Sigma_{11} = \wwidehat\Sigma_{n}$, 	$\wwidehat\Sigma_{12} = \begin{bmatrix}
	\wwidehat\sigma(n)\\ \vdots\\ \wwidehat\sigma(1)
	\end{bmatrix}$. Let $\underline Z_n^{*} \sim \mathcal{N}(0,\wwidehat\Sigma_n)$; $Z_{n+1}^* \sim \mathcal{N}(\wwidehat\Sigma_{21}\wwidehat\Sigma_{11}^{-1}\tilde{\widehat{\underline Z}}_n,\wwidehat\Sigma_{22} - \wwidehat\Sigma_{21}\wwidehat\Sigma_{11}^{-1}\wwidehat\Sigma_{12} )$;
$Y_{n+1}^* = \widehat{F}^{-1}(\Phi(Z_{n+1}^*))$
	\item Choose a predictor for $Y_{n+1}$ based on $\underline Y_n$. For example, the $L^2$ optimal predictor is 
	$$\widehat Y_{n+1}= E (\widehat{F}^{-1}(\Phi(Z_{n+1}))|\underline Y_n).$$
	Where $Z_{n+1}|\underline Y_n \sim \mathcal{N}(\wwidehat\Sigma_{21}\wwidehat\Sigma_{11}^{-1}\tilde{\widehat{\underline Z}}_n,\wwidehat\Sigma_{22} - \wwidehat\Sigma_{21}\wwidehat\Sigma_{11}^{-1}\wwidehat\Sigma_{12} ). $
	\item Re-estimate all the transforms and matrices used in the above calculation, with $\underline Z_n^{*}$ and $Y_t^* = \widehat{F}^{-1}(\Phi(Z_{t}^*))$. The one step ahead predictor in the bootstrap world is $\widehat Y_{n+1}^* = E^*((\widehat{F}^*)^{-1}(\Phi(Z_{n+1}))|\underline Y_n)$,  
	where the expectation in the bootstrap world is calculated through the distribution of $(Z_{n+1}|\underline Y_n)^*$ that is $ \mathcal{N}(\wwidehat\Sigma_{21}^*(\wwidehat\Sigma_{11}^{*})^{-1}\tilde{\widehat{\underline Z}}_n,\wwidehat\Sigma_{22}^* - \wwidehat\Sigma_{21}^*(\wwidehat\Sigma_{11}^{*})^{-1}\wwidehat\Sigma_{12}^* )$ distributed. 
	\item The bootstrapped $L^2-$ optimal predictive root is:
	$$Y_{n+1}^* - \widehat Y_{n+1}^*$$
	
	\item Denote the empirical CDF of bootstraped predictive root as $\bar{G}$. The
approximate  $(1-\alpha)$ prediction interval for $Y_{n+1}$ is
		$$\left(\widehat Y_{n+1} + \bar G^{-1}(\alpha/2),\widehat Y_{n+1} +  \bar G^{-1}(1 - \alpha/2)\right). $$

\end{enumerate}
\end{itquote}
Here we provide an explanation to step \textit{5} above.  Bootstrap is supposed to capture the distribution of the predictive root $Y_{n+1} -  E (\widehat{F}^{-1}(\Phi(Z_{n+1}))|\underline Y_n),$ where for $Y_{n+1} = F^{-1}_Y(\Phi(Z_{n+1}))$, $Z_{n+1}$ has the conditional distribution $\mathcal{N}(\Sigma_{21}\Sigma_{11}^{-1}{\underline Z}_n,\Sigma_{22} - \Sigma_{21}\Sigma_{11}^{-1}\Sigma_{12} ) $; and in the expectation $Z_{n+1}|\underline Y_n \sim \mathcal{N}(\wwidehat\Sigma_{21}\wwidehat\Sigma_{11}^{-1}\tilde{\widehat{\underline Z}}_n,\wwidehat\Sigma_{22} - \wwidehat\Sigma_{21}\wwidehat\Sigma_{11}^{-1}\wwidehat\Sigma_{12} ) $ is estimated from data. Clearly, the randomness in the distribution of the predictive root not only comes from the randomness of the series, but also randomness in the estimation. Thus in the bootstrap world, we should replace theoretical transforms with their data-dependent analogue, and also account for all the errors arising from estimation, i.e. replace $F_Y$ with $\widehat F$, $\widehat{F}$ with $\widehat{F}^*$, and $\wwidehat{\Sigma}_n$ with $\wwidehat{\Sigma}_n^*$, resulting in the formula in step \textit{5}. In this way, the bootstrap procedure can capture all the randomness in the distribution of predictive root, which helps relieve potential {\it undercoverage} issue for finite data.
See also Ch. 9 of \cite{Politis2015}.  

\subsection{Appropriate estimators for $\widehat F$, $\tilde\Phi^{-1}$ and $\wwidehat \Sigma_n$}
 Now we discuss what should be the appropriate estimators $\widehat F$, $\wwidehat \Sigma_n$ and also why an augmented version of  $\Phi^{-1}$ might be needed. Firstly, it is necessary that $\widehat F$, $\widehat F^{-1}$, $\wwidehat{\Sigma}_n$ should be consistent in certain forms for $F_Y$, $F_Y^{-1}$ and $\Sigma_n$ respectively. For $\widehat F$, the first idea is to use the empirical CDF $\bar F(y) = \frac{1}{n} \sum_{t=1}^n I\{Y_t\leq y\}$, where $I\{\cdot\}$ is the indicator function, 
 and its inverse $\bar F^{-1} (p) = \inf\{y\in\mathbb{R}: \bar F(y) \geq p\}$. Under moment and short-range dependence assumptions, consistency of $\bar F$ and $\bar F^{-1}$ can be established by looking into the empirical process and quantile process.  Details are  in later sections and will play an important role  in our   proofs. 

Another natural candidate is the kernel smoothed CDF estimator 
$\widehat{F}_h(y) = \frac{1}{n}\sum_{t=1}^n K_h(y - Y_t)$, where $K_h(y- Y_t) = K(\frac{y - Y_t}{h})$ and $K$ is a smooth CDF function with additional assumptions. The obvious advantage of $\widehat{F}_h$ is that it is continuous, which is a   property  $\bar F$ is lacking.  An additional implication of using $\bar F$ is the resulting $\widehat{U}_t = \bar{F}(Y_t)$ only takes value in $\{\frac{1}{n},\frac{2}{n},\cdots,1\}$ and $Y_t^* = \bar{F}^{-1}(U_t^*)$ only takes value in $\{Y_t\}_{t=1}^n$. But by using the kernel estimator $\widehat F_h$ and its inverse, $Y_t^*$ can take values that did not appear in the original series. 
 If the data size $n$ is large, the influence of this is minimal; whereas if $n$ is small, $\widehat{F}_h$ is a better estimator because of its ability to interpolate unseen values compared to the coarse behavior of $\bar F$. It is also worth mentioning that when sample size is large, using $\bar F$ and its inverse will save computational time comparing to $\widehat F_h$.

Using  $\bar{F}$ for the first step transform $\widehat U_t = \bar F(Y_t)$ will result in having a value $1\in\{\widehat U_t\}_{t=1}^n$, and the following second step transform $\Phi^{-1}(1)$ is not well defined. To this respect, we use an augmented $\tilde\Phi^{-1}$, which is the inverse CDF of a thresholded standard normal $ \mathcal{N}_c(0,1)$ as defined in Section 2.1. 
 By doing this, $\tilde\Phi^{-1}$ is bounded on $[0,1]$, which relieves this problem.
Asymptotically, thresholding also controls the fast diverging behavior of $\Phi^{-1}$ at endpoints $0$ and $1$, which helps in analyzing convergence of the covariance estimator $\wwidehat{\Sigma}_n$ for both $\bar F$ and $\widehat F_h$ scenarios.  
Nevertheless, under finite sample settings, the correction from $\Phi^{-1}$ to $\tilde \Phi^{-1}$ doesn't require too much attention. For $\bar F$ scenario, we can simply change the value $1$ to $\frac{n-1}{n}$ from $\{\widehat U_t\}_{t=1}^n$ to avoid the problem of $\Phi^{-1}(1)$ and the effect of doing this is negligible. And for $\widehat F_h$, no correction is needed.
 
Consistent estimator $\widehat\Sigma_n$ of the autocovariance matrix $\Sigma_n$ has been well studied. \cite{Wu2009} established the first result on consistency of a banded  matrix estimator. 
Here we shall use the more general flat-top estimators of \cite{mcmurrypolitis2010}. Let $\kappa(x)$ be the tapering weight function:
\begin{equation}\label{eq:21}
\kappa(x) =
\begin{cases}
1 & \text{if  $|x|\leq 1$}\\
g(|x|) & \text{if $<1|x|\leq c_\kappa$}\\
0 & \text{if $|x| > c_\kappa$}
\end{cases}       
\end{equation}
where $|g(x)|<1$.  The most commonly used flat-top kernel is defined by
\begin{equation}
\kappa(x) =
\begin{cases}
1 & \text{if  $|x|\leq 1$}\\
2 - |x|& \text{if $1<|x|\leq 2$}\\
0 & \text{if $|x| > 2$} ;
\end{cases}       
\end{equation}
see \cite{politis1995}. 
Let $l$ be the bandwidth of choice and $\kappa_l(x) = \kappa(x/l)$. The tapered estimator $\widehat{\Sigma}_n = \widehat{\Sigma}^{(\kappa,l)}_n$ at entry $(i,j)$ has value:  

\begin{equation}\label{eq:23}
\widehat{\Sigma}^{(\kappa,l)}_n(i,j) = \kappa_l(k)\left(\frac{1}{n}\sum_{t=1}^{n-k} Z_tZ_{t+k}\right),
\end{equation}
where $|i-j| = k$.
\begin{remark} \rm   
It is worth noting that the above estimator is not guaranteed to be positive definite (PD) for finite samples, but can be corrected towards PD by looking into its Cholesky decomposition; for details see \cite{mcmurrypolitis2010}, \cite{mcmurry2015}. Asymptotically, the estimator is PD with probability tending to one, and the corrected estimator enjoys the same rate of convergence as the original estimator. Either way, it is not   a problem in asymptotic studies. 
\end{remark}
 
While convergence result exist  for $\widehat{\Sigma}_n$, the true series $\{Z_t\}$ is latent and can not be used in the calculation of $\widehat{\Sigma}_n$.
 We can only use the estimator
with the estimated $\tildehat{Z}_t$ defined in Section 2.1 that are calculated from original data $Y_t$, resulting in an estimator $\wwidehat{\Sigma}_n$, where 
 \begin{equation}
	\wwidehat{\Sigma}_n^{(\kappa,l)} (i,j)= \kappa_l(k)\left(\frac{1}{n}\sum_{t=1}^{n-k} \tildehat{Z}_t\tildehat{Z}_{t+k}\right).
 \end{equation}
  Consistency of $\wwidehat{\Sigma}_n$ to $\Sigma_n$ will be shown in the following.

\section{Assumptions and preliminary results}
\subsection{Acceptable forms of $\{Y_t\}$} The stationary series $\{Y_t\}$   in our setting 
will be assumed to have the form:
\begin{equation}\label{eq:31}
	Y_t = f(W_t)
\end{equation}
where $f$ is some continuously differentiable function such that the CDF  of $Y_t$ $F_Y$ is strictly increasing and continuously differentiable, and  $\{W_t\}$ is a strictly stationary Gaussian process.
Without loss of generality, we may assume that $EW_t=0$ since the mean of $W_t$
can be incorporated in the function $f$.

 Equation (\ref{eq:31}) is a common form of extension from Gaussian series to non-Gaussian case. It has been used in the study of long range dependence, as well as analyzing time series with heavy tails;
 see \cite{nonstablegaussian}.
 This assumption also figures in a completely different setting, namely that of Bayesian
machine learning; see \cite{NIPS2003_2481}.

By the Wold decomposition (see \cite{Brockwell1991}) coupled with  Gaussianity, $W_t$ admits the following expansion
\begin{equation}
W_t = \sum_{j=0}^\infty a_j\epsilon_{t-j} + V_t
\end{equation}
where $\epsilon_t\overset{i.i.d.}{\sim}\mathcal{N}(0,1)$, and $V_t$ is a deterministic process independent from $\{\epsilon_t\}$. 
 If we assume $W_t$ is purely nondeterministic, then $V_t$  vanishes, and 
it is clear that $Y_t$ is of the form 
$Y_t = h(\cdots,\epsilon_{t-1},\epsilon_t)$
for some function $h$. Such representation naturally appears in many time series and dynamical system models, and is also a common form used for developing short range dependence conditions;
see \cite{Wu2005b} for details.

   The following lemma holds.
\begin{lemma}\label{lm:31}
Let	$Z_t  = \Phi^{-1}\circ F_Y(Y_t)$. Them $Z_1,\cdots,Z_n $ are jointly normal if and only if $Y_t$ admits the representation in equation (\ref{eq:31}).

\end{lemma}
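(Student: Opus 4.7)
The plan is to prove the equivalence through two directions, using the elementary identity $F_Y(Y_t) = \Phi(Z_t)$, equivalently $Y_t = F_Y^{-1}(\Phi(Z_t))$, which follows immediately from the definition of $Z_t$ once $F_Y$ is continuous and strictly increasing.

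For sufficiency ($\Leftarrow$), I assume $Y_t = f(W_t)$ with $\{W_t\}$ strictly stationary Gaussian having zero mean and variance $\sigma_W^2$. My first step is to argue that the regularity hypotheses on $F_Y$ force $f$ to be strictly monotone. Continuity of $F_Y$ precludes $f$ from being constant on any interval (else $Y_0$ would have an atom), while $F_Y \in C^1$ precludes $f$ from having an interior local extremum: a change-of-variables computation shows that near the extreme value $y_0 = f(w_0)$, two preimages collapse to $w_0$ where $f'(w_0)=0$, so the density of $Y_0 = f(W_0)$ blows up and $F_Y'$ cannot be continuous at $y_0$. These two observations rule out non-monotone behaviour, so $f$ is strictly monotone; WLOG strictly increasing. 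Then $F_Y(y) = P(W_0 \leq f^{-1}(y)) = \Phi(f^{-1}(y)/\sigma_W)$, and applying $\Phi^{-1}$ gives $Z_t = f^{-1}(Y_t)/\sigma_W = W_t/\sigma_W$. Since $(W_1,\ldots,W_n)$ is jointly Gaussian and $(Z_1,\ldots,Z_n)$ is a fixed linear transformation of it, $(Z_1,\ldots,Z_n)$ is jointly normal. The strictly decreasing case is symmetric and yields $Z_t = -W_t/\sigma_W$.

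For necessity ($\Rightarrow$), I assume $(Z_1,\ldots,Z_n)$ is jointly normal and set $W_t := Z_t$, $f := F_Y^{-1} \circ \Phi$, so that $Y_t = F_Y^{-1}(\Phi(Z_t)) = f(W_t)$ supplies the representation (\ref{eq:31}). Verifying the hypotheses: $F_Y$ is $C^1$ and strictly increasing, so by the inverse function theorem $F_Y^{-1}$ is $C^1$, and composing with $\Phi \in C^\infty$ yields $f \in C^1$; the PIT guarantees $Z_t \sim \mathcal{N}(0,1)$, so $EW_t = 0$; and $\{W_t\} = \{Z_t\}$ is strictly stationary because each $Z_t$ is obtained by the same deterministic function of $Y_t$ while $\{Y_t\}$ is strictly stationary. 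Joint Gaussianity of $\{W_t\}$ is then exactly the standing hypothesis.

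The most delicate step I anticipate is the forward deduction that $f$ must be strictly monotone; this is where the otherwise mild-looking $C^1$ hypothesis on $F_Y$ does substantive work and rules out innocuous-seeming cases such as $f(w)=w^2$, for which $F_Y$ is continuous and strictly increasing on its support but fails to be $C^1$ at the critical value. Once monotonicity is in hand, the rest reduces to the standard observation that $\Phi^{-1} \circ F_W$ applied to a Gaussian $W$ returns a centred, rescaled version of $W$.
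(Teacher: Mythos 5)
Your proof is correct, but the sufficiency direction follows a genuinely different route from the paper's. The paper observes that $g := \Phi^{-1}\circ F_Y\circ f$ is a continuously differentiable transform carrying the normal $W_t$ to the normal $Z_t$, invokes Corollary 2 of \cite{mase1977} to conclude that such a normality-preserving smooth map must be affine, and then gets joint normality of $(Z_1,\cdots,Z_n)$ as a coordinatewise linear image of the jointly Gaussian $(W_1,\cdots,W_n)$; strict monotonicity of $f$ is only recorded afterwards, as Lemma \ref{lm:32}. You instead establish monotonicity of $f$ first, directly from the hypotheses on $F_Y$ (the no-atom argument excludes flat pieces, the derivative blow-up at a critical value excludes interior extrema), and then compute $F_Y = \Phi(f^{-1}(\cdot)/\sigma_W)$ explicitly, so that $Z_t = \pm W_t/\sigma_W$. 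Your route is more elementary and self-contained — no external normality-preservation theorem — it yields the paper's Lemma \ref{lm:32} as a byproduct rather than a corollary, and it exhibits the linear map concretely; the paper's route is shorter given the citation and avoids any case analysis at critical points. One place to tighten your version: the blow-up step as phrased relies on a change-of-variables density computation, which tacitly assumes a nondegenerate extremum; a cleaner argument bounds the left difference quotient of $F_Y$ at $y_0$ from below by $c/|f'(\xi_y)| \rightarrow \infty$ via the mean value theorem, which needs nothing beyond $f\in C^1$ and $f'(w_0)=0$ and contradicts differentiability of $F_Y$ at $y_0$. Your necessity direction coincides with the paper's and is in fact slightly more careful, since you verify that $f = F_Y^{-1}\circ\Phi$ is continuously differentiable, which requires positivity of the density $f_Y$ as in assumption (A2).
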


\begin{proof}
	If (\ref{eq:31}) holds, $Z_t = \Phi^{-1}\circ F_Y\circ f (W_t)$. Since both $W_t$ and $Z_t$ are normally distributed, $\Phi^{-1}\circ F_Y \circ f$ is a normality preserving, continuously differentiable transform. Therefore the transform is linear by Corollary 2 of \cite{mase1977}. 
Therefore, each $Z_t$ is linearly transformed from $W_t$, $t\in\{1,2,\cdots,n\}$. Collecting the n instaneous linear transforms we obtain a transform of the vector $(W_1,\cdots,W_n)$ to $(Z_1,\cdots,Z_n)$ that is linear. Since $(W_1,\cdots,W_n)$ are jointly normal and a linear transform preserves normality, $(Z_1,\cdots,Z_n)$ is multivariate normal; see Lemma 3.1 of  \cite{Das2017}.

Conversely, given $Z_t = \Phi^{-1}\circ F_Y (Y_t)$ is jointly normal and strict stationary, we have $Y_t = F_Y^{-1}\circ\Phi(Z_t)$, which is of the form (\ref{eq:31}).
\end{proof}
The above lemma essentially clarifies the scope of time series models the model-free algorithm can be applied to. Since $F^{-1}_Y\circ \Phi$ is  
monotone, it is reasonable to believe that the   function $f$ is monotone. In fact, $f(\cdot)$ is equivalent to $F^{-1}_Y\circ\Phi(\cdot)$ modulo affine differences in their arguments. Thus we have the following lemma:
\begin{lemma}\label{lm:32}
Under the setup in equation (\ref{eq:31}), given $F_Y$ is strictly increasing, then $f$ is a strictly monotone function.
\end{lemma}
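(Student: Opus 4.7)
The plan is to exploit the linearization already obtained inside the proof of Lemma~\ref{lm:31}. Recall that step concluded that $\Phi^{-1}\circ F_Y\circ f$ is a continuously differentiable normality-preserving transformation of the Gaussian variable $W_t$, and therefore, by Corollary~2 of \cite{mase1977}, is \emph{affine}. So I would start by writing
$$\Phi^{-1}\circ F_Y\circ f(w)=aw+b\qquad\text{for some constants }a,b\in\mathbb{R},$$
and then invert to obtain the explicit representation
$$f(w)=F_Y^{-1}\bigl(\Phi(aw+b)\bigr).$$

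Next, I would rule out the degenerate slope $a=0$. If $a$ were zero, the display above would make $f$ constant, hence $Y_t=f(W_t)$ would be almost surely constant, which contradicts the standing hypothesis that $F_Y$ is strictly increasing (equivalently, that $Y_t$ is nondegenerate). Thus $a\neq 0$.

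With $a\neq 0$ in hand, strict monotonicity follows by composition: $w\mapsto aw+b$ is strictly monotone (increasing if $a>0$, decreasing if $a<0$); $\Phi$ is strictly increasing on $\mathbb{R}$; and $F_Y^{-1}$ is strictly increasing on $(0,1)$ because $F_Y$ is strictly increasing and continuous. Composing three strictly monotone maps yields a strictly monotone $f$, with direction matching the sign of $a$.

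I expect no real obstacle: once Lemma~\ref{lm:31}'s appeal to Mase's theorem is reused, the argument is essentially a bookkeeping check. The only subtle point worth making explicit is the exclusion $a=0$, which requires invoking the nondegeneracy built into the assumption that $F_Y$ is strictly increasing; everything else is automatic from the monotonicity of the component maps.
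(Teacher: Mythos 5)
Your proof is correct and follows essentially the same route the paper takes: the paper does not write out a formal proof of this lemma but justifies it in the preceding paragraph by exactly this observation, namely that by Lemma~\ref{lm:31} (via Mase's corollary) $f$ equals $F_Y^{-1}\circ\Phi$ composed with an affine map, hence is strictly monotone. Your explicit exclusion of the degenerate slope $a=0$ via nondegeneracy of $F_Y$ is a detail the paper leaves implicit, and it is a worthwhile addition.
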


\begin{remark} \rm
 One large subclass of strictly monotone, continuously differentiable functions is an $f$ with derivative bounded away from $0$.  As it turns out, this subclass can simplify our short range dependence assumption to be introduced next.

\end{remark}

\subsection{Short range dependence (SRD) assumptions on $\{Y_t\}$} 
The SRD assumption is necessary for both consistency of mentioned transforms and central limit theorems. Finding the appropriate SRD assumption is a core problem  for proving bootstrap validity in our setting since
direct calculation of bootstrap variances is almost impossible.  We must take into consideration the special characteristics of both our bootstrap procedure and bootstrap pseudo data and find the SRD assumption that can be well incorporated in the proofs. Thus the SRD condition should fulfill the requirement that with it assumed:
\begin{enumerate}
\item Consistency of proposed transforms can be established in proper mathematical forms;
\item Certain central limit theorems can be established.
\item Consistency of bootstrap variance, as further explained in Section 4.
\end{enumerate}

 One of the most widely used SRD assumptions for strictly stationary time series is the strong mixing condition introduced by \cite{Rosenblatt43}, and extensively studied since. Many useful results are available for series under certain mixing rates. However such conditions are often hard to verify for general time series models. 
More recently, \cite{Wu2005b} introduced the 
physical dependence measure described in the following. Assume $Y_t = h(\cdots,\epsilon_{t-1},\epsilon_t)\in L^p$ with $\epsilon_t\overset{i.i.d.}{\sim} F_\epsilon$; then, the physical dependence measure is defined by:
\begin{equation}
	\delta_p(j) = \norm{Y_j - Y_j'}_p
\end{equation}
where $Y_j' = h(\cdots,\epsilon_{-1},\epsilon_0',\cdots,\epsilon_j)$ with $\epsilon_0'\sim F_\epsilon$  independent of $\{\epsilon_t\}$. The series $Y_t$ is called strongly $p-$ stable if
\begin{equation}
(C0)\quad\Delta_p =  \sum_{j=0}^\infty \delta_p(j) <\infty.
\end{equation}

The $p$-stable assumption was able to provide the first results on the convergence of banded
and/or tapered autocovariance matrix estimators by \cite{Wu2009} 
and  \cite{mcmurrypolitis2010}.
Yet results for functional central limit theorem for the empirical process--which relates to uniform consistency of empirical CDF--can not be readily established with this assumption,  and require  further conditions. To simplify assumptions, we hope for a condition that offers more flexibility than what is mentioned above. One short range dependence measure that is gaining interest is the $m-approximation$ assumption developed in a series of papers in \cite{BERKES20091298}, \cite{berkes2011} and  \cite{hormann2010}.

The following related conditions are used for proving different properties and are very important in our proofs.
\theoremstyle{definition}
\begin{definition}
The following different $m-approximation$ conditions are proposed. Let $\{Y_t\}$ be a strictly stationary time series,
there exists  an $m-$ dependent series $\{
Y_t^{(m)}\}$ that
\begin{itquote}

(C1) $\forall t\in\mathbf{Z}$,  $P(|Y_t -Y_t^{(m)}|>\gamma_m)<\delta_m$,  for some sequence $\gamma_m\rightarrow 0$ and $\delta_m\rightarrow0$.\\

(C2) both $Y_{t}$ and $Y_t^{(m)}$ are in  $L^p$;  $\exists \delta(m):\mathbb{N}\rightarrow\mathbb{R}_+$,
satisfying $\delta(m)\ll m^{-A}$ for some $A>0$, and such that $\norm{Y_t - Y_t^{(m)}}_p\leq\delta(m)$. \\

(C3) both $Y_{t}$ and $Y_t^{(m)}$ are in  $L^p$; $\sum_{m = 0}^{\infty} \norm{Y_m - Y_m^{(m)}}_p<\infty$.\\
\end{itquote}
Here, the notation  "$a_n\ll b_n$" means $\limsup{\left\lvert\frac{a_n}{b_n}\right\rvert}\rightarrow 0$.

\end{definition}

Assumption (C1) was introduced in \cite{BERKES20091298} to establish asymptotic behavior of  empirical process. (C2) was introduced in \cite{berkes2011} to establish invariance principle for partial sums. Both are relevant in our setting. (C3) appeared in \cite{hormann2010} in the context of functional time series. While it is not directly related to the setup here, there is an important relation between (C3) and the physical dependence measure (C0), under which consistency of tapered autocovariance matrix estimator can be established.

There are various methods to construct the $m$-dependent sequence $Y_t^{(m)}$. Typically one can use truncation, substitution and coupling method on the representation $Y_t = h(\cdots,\epsilon_{t-1},\epsilon_t)$; for details refer to \cite{BERKES20091298}. The coupling construction is most desirable: we replace $\epsilon$ by i.i.d. independent copies of $\epsilon'$ for times that are at least $m$ steps away from current time $t$, i.e. let $Y_t^{(m)} = h(\cdots,\epsilon_{t-m-1}',\epsilon_{t-m}',
\epsilon_{t-m+1},\cdots,\epsilon_t)$. The resulting $Y_t^{(m)}$ is $m-$ dependent, and also has the advantage that it has the same distribution and moments as $Y_t$.    This construction along with (C2) were also used in \cite{wu2005a}
where it was called the geometric-moment contracting property, assuming a faster geometric decay rate.  We will use it in what follows. 

The above SRD conditions are related;   the following lemma clarifies. 
\begin{lemma}\label{lm:33}
	Suppose that $\{Y_t^{(m)}\}$ is constructed by coupling as defined above, then: 
\quad	\\

1. Assume (C2)  with $A =C(\frac{1}{p} + \frac{1}{\theta})$ for some $C>0$, $\theta\in(0,1)$; then (C1) holds with $$P(|Y_t - Y_t^{(m)}|>m^{-C/\theta})<m^{-C}.$$\\

2. Assume (C2) with $A>1$; then (C3) holds.\\

3. Assume (C3); then (C0) holds.\\

4. (C2) is preserved (with a new rate) under  $\theta$-Lipschitz transforms. To elaborate, 
let $g$ be a $\theta-$Lipschitz function, i.e., for some constant $K$ and $\theta\in(0,1]$, $|g(x) - g(y)|\leq K|x-y|^\theta$. If $\norm{Y_t - Y_t^{(m)}}_p\leq \delta(m)$ with $\delta(m)\ll m^{-A}$, then 
$$\norm{g(Y_t) - g(Y_t^{(m)})}_p \leq K \delta(m)^\theta\ll m^{-\theta A}.$$
\end{lemma}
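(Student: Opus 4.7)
Since the lemma has four distinct claims, my plan is to dispatch them separately. The first two are one-line applications of Markov's inequality and a comparison test, the fourth is a short Jensen/H\"older calculation, and only the third---relating (C3) to the physical dependence measure---requires a genuine idea, namely an exchangeability swap of innovations.

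For part~1 I would apply Markov's inequality at the level of $p$-th moments: $P(|Y_t-Y_t^{(m)}|>\gamma_m)\leq \norm{Y_t-Y_t^{(m)}}_p^p/\gamma_m^p \leq \delta(m)^p/\gamma_m^p$. With $\gamma_m=m^{-C/\theta}$ and $\delta(m)\ll m^{-A}$ for $A=C(1/p+1/\theta)$, the bound becomes $\ll m^{-pA+pC/\theta}=m^{-C}$, giving (C1). For part~2, the comparison $\sum_m\norm{Y_m-Y_m^{(m)}}_p\leq \sum_m\delta(m)\ll \sum_m m^{-A}<\infty$ holds whenever $A>1$, which is (C3). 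For part~4, raise the $\theta$-H\"older estimate $|g(Y_t)-g(Y_t^{(m)})|\leq K|Y_t-Y_t^{(m)}|^\theta$ to the $p$-th power, take expectations, and apply Jensen (concave exponent $\theta\in(0,1]$) to obtain $E|Y_t-Y_t^{(m)}|^{p\theta}\leq (E|Y_t-Y_t^{(m)}|^p)^\theta$; taking $p$-th roots yields $K\delta(m)^\theta \ll m^{-\theta A}$, as claimed.

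The real work is part~3. The definitions differ in a crucial way: $\delta_p(j)=\norm{Y_j-Y_j'}_p$ replaces only the single innovation $\epsilon_0$, while the coupling $Y_j^{(m)}$ replaces the entire block of innovations with indices $s\leq j-m$. The plan is to bridge them by swapping $\epsilon_s\leftrightarrow \epsilon_s'$ simultaneously for every $s\leq -1$. Because the $(\epsilon_s,\epsilon_s')$ pairs are i.i.d., this map preserves their joint distribution; under it $Y_j\mapsto Y_j^{(j+1)}$ (which replaces $\epsilon_s$ for $s\leq -1$) and $Y_j'\mapsto Y_j^{(j)}$ (which additionally replaces $\epsilon_0$), yielding the key identity
\[
\norm{Y_j-Y_j'}_p \;=\; \norm{Y_j^{(j+1)}-Y_j^{(j)}}_p \;\leq\; \norm{Y_j-Y_j^{(j)}}_p + \norm{Y_j-Y_j^{(j+1)}}_p .
\]
Shift invariance of the joint sequence $(\epsilon,\epsilon')$ further gives $\norm{Y_j-Y_j^{(m)}}_p=\norm{Y_m-Y_m^{(m)}}_p$ for every $m$, so the right-hand side equals $\norm{Y_j-Y_j^{(j)}}_p+\norm{Y_{j+1}-Y_{j+1}^{(j+1)}}_p$. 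Summing over $j$ dominates $\Delta_p$ by at most twice the series in (C3), establishing $\Delta_p<\infty$, i.e., (C0). The main obstacle is the bookkeeping of the swap---verifying that the images under the exchange really are $Y_j^{(j+1)}$ and $Y_j^{(j)}$ in the paper's coupling convention---which is routine but easy to mis-index.
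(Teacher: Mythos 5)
Your proposal is correct, and for parts 1 and 2 it coincides with the paper's proof (Markov's inequality applied to $|Y_t-Y_t^{(m)}|^p$ with $\gamma_m=m^{-C/\theta}$, and summability of $\delta(m)\ll m^{-A}$ when $A>1$). For part 4 you write out the H\"older/Jensen computation explicitly, whereas the paper simply cites p.~2443 of Berkes, H\"ormann and Schauer (2011); your bound $\norm{g(Y_t)-g(Y_t^{(m)})}_p\le K\delta(m)^{\theta}$ is exactly what is needed. For part 3 the underlying idea is the same---exchangeability of the i.i.d.\ innovations plus one triangle inequality, yielding $\Delta_p\le 2\sum_m\norm{Y_m-Y_m^{(m)}}_p$---but the bookkeeping is mirror-imaged: the paper inserts a fresh doubly-primed sequence $h(\cdots,\epsilon''_{-1},\epsilon''_0,\epsilon_1,\cdots,\epsilon_m)$ as the middle term of the triangle inequality and then identifies each of the two resulting differences with $\norm{Y_m-Y_m^{(m)}}_p$ in distribution, while you first perform the coordinate swap $\epsilon_s\leftrightarrow\epsilon'_s$ for $s\le -1$ to rewrite $\delta_p(j)$ exactly as $\norm{Y_j^{(j+1)}-Y_j^{(j)}}_p$ and only then triangulate through $Y_j$, invoking the $t$-invariance of the law of $(Y_t,Y_t^{(m)})$. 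Your indexing is consistent with the paper's coupling convention ($Y_j^{(j+1)}$ replaces $\epsilon_s$ for $s\le -1$, $Y_j^{(j)}$ for $s\le 0$), the swap preserves the joint law because the pairs $(\epsilon_s,\epsilon'_s)$ are i.i.d.\ with exchangeable coordinates, and the extra stationarity step is harmless; so both routes deliver the same factor-2 bound, yours merely making explicit the distributional identity that the paper's second triangle-inequality term uses implicitly.
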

\begin{proof}
See Appendix.
\end{proof}
Based on the above lemma, it is convenient that we assume condition (C2) with appropriate rate since other dependence measures can be derived from it.

 We summarize our first assumptions as follows. \\

\begin{assumption}
	\quad \\
	
(A1) $Y_t = f(W_t)$, where $f: \mathbb{R} \rightarrow \mathbb{R}$ is a continuously differentiable function, and $W_t$ is a (zero mean) stationary Gaussian process with spectral density bounded and strictly bounded away from 0. 
Also assume $W_t$ is purely nondeterministic.\\

(A2) The CDF $F_Y(\cdot)$ is strictly increasing, $\theta$-Lipschitz and continuously differentiable with density function $f_Y(\cdot)>0$.\\

(A3) $Y_t\in L^p$ satisfies (C2) with $p>2$, and $A$ to be specified later. Also, $\exists c>0$, $|f'|\geq c >0$.
 \\

Or:\\

(A4) $W_t$ satisfies (C2)  with $p>2$, and $A$ to be specified later. Also,  $f$ preserves the  (C2) property.
\end{assumption}

\begin{remark} \rm 
	(a). It is possible to relax (A1) and (A2) when 
 $f$ is a possibly discontinous but strictly monotone function, and $F_Y$ is an absolutely continuous function subject to some extra constraints. It can be shown that Lemma \ref{lm:31} still holds under this assumption;
see Corollary 3.1 of \cite{Das2017}. Our simulations in Section 6 also confirms this.  However for the purpose of analysis and to avoid edge results, we require a stronger assumption as in (A1) and (A2).

	(b). Assumptions (A3) and (A4) have the same purpose: to establish SRD for  $\{Y_t\}$ and $\{Z_t\}$. It turns out with the derivative of the transfer function $f$ strictly bounded away from 0, SRD of $Y_t$ will deduce SRD for the other series; while otherwise SRD assumption on both $W_t$ and $Y_t$ is required. This is because it is not clear yet what kind of functions $f$ preserve the  $m-$approximable property.
	
	(c). Under assumptions (A1) and (A2), by Lemma \ref{lm:32}, $f$ is also a strictly monotone function. 
\end{remark}
\begin{lemma}
Assume (A1)--(A3), then  $\{W_t\}$ and $\{Z_t\}$ satisfy (C2). 
\end{lemma}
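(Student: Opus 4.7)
The plan is to exploit Lemma \ref{lm:33}(4), which propagates condition (C2) through Lipschitz transforms, together with two structural facts: under (A1)--(A3), the transfer function $f$ admits a Lipschitz inverse, and by Lemma \ref{lm:31} the map $W_t \mapsto Z_t$ is affine. Once both transfer maps are verified to be $1$-Lipschitz, (C2) for $\{Y_t\}$ pushes forward to $\{W_t\}$ and then to $\{Z_t\}$ in one step each.

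First I would verify that $f^{-1}$ is globally Lipschitz. By (A1) $f$ is continuously differentiable; by Lemma \ref{lm:32} it is strictly monotone; and by (A3) we have $|f'|\ge c>0$. Thus $f$ is a $C^1$ diffeomorphism of $\mathbb{R}$ onto its range, and by the inverse function theorem $f^{-1}$ is $C^1$ with $|(f^{-1})'(y)| = 1/|f'(f^{-1}(y))| \le 1/c$. The mean value theorem then gives $|f^{-1}(y_1) - f^{-1}(y_2)| \le (1/c)\,|y_1 - y_2|$ for all $y_1, y_2$. Given the $m$-dependent approximant $\{Y_t^{(m)}\}$ supplied by (A3), define $W_t^{(m)} := f^{-1}(Y_t^{(m)})$; because this is a pointwise deterministic function of $Y_t^{(m)}$, the new sequence inherits $m$-dependence. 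Applying Lemma \ref{lm:33}(4) with $g = f^{-1}$, $\theta = 1$, $K = 1/c$ yields
\[
\norm{W_t - W_t^{(m)}}_p \le \frac{1}{c}\,\delta(m) \ll m^{-A}.
\]
Integrability is immediate: $W_t \in L^p$ since $W_t$ is Gaussian, and the affine bound $|f^{-1}(y)| \le |f^{-1}(0)| + (1/c)|y|$ combined with $Y_t^{(m)} \in L^p$ gives $W_t^{(m)} \in L^p$.

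For $\{Z_t\}$, I would invoke Lemma \ref{lm:31}: since $Z_t = \Phi^{-1}\circ F_Y\circ f(W_t)$ is jointly normal, the Mase-based argument in the proof of that lemma identifies $\Phi^{-1}\circ F_Y\circ f$ as an affine map. By stationarity the coefficients are $t$-independent, so $Z_t = \alpha W_t + \beta$ for constants $\alpha,\beta$. Setting $Z_t^{(m)} := \alpha W_t^{(m)} + \beta$ gives an $m$-dependent sequence, and a second application of Lemma \ref{lm:33}(4) with the $|\alpha|$-Lipschitz affine map produces $\norm{Z_t - Z_t^{(m)}}_p \le |\alpha|\,\norm{W_t - W_t^{(m)}}_p \ll m^{-A}$. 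Integrability of $Z_t$ and $Z_t^{(m)}$ is automatic since both are affine images of Gaussians, respectively of the $L^p$ variable $W_t^{(m)}$.

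The argument is essentially mechanical and I do not foresee a genuine obstacle; the only place where the hypotheses truly bite is that the lower bound $|f'|\ge c>0$ in (A3) is precisely what converts the smoothness of $f$ into Lipschitzness of $f^{-1}$, enabling Lemma \ref{lm:33}(4) to operate in the direction $Y_t \to W_t$. Without this uniform bound one would need to posit SRD directly for $W_t$ (this is the role of the alternative assumption (A4)) or to identify a broader class of transformations that preserve (C2), which is explicitly why Remark (b) after the assumptions is needed. The same template---substitute the $m$-dependent approximant into a pointwise Lipschitz map, use Lemma \ref{lm:33}(4), and handle $L^p$ membership via the affine growth bound---handles both series uniformly.
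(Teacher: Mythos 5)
Your proposal is correct and follows essentially the same route as the paper's proof: use $|f'|\ge c>0$ together with continuous differentiability and strict monotonicity (Lemma \ref{lm:32}) to conclude $f^{-1}$ is Lipschitz, push (C2) from $\{Y_t\}$ to $\{W_t\}$ via the Lipschitz-preservation property in Lemma \ref{lm:33}, and then transfer to $\{Z_t\}$ using the linearity of the map $W_t \mapsto Z_t$ from Lemma \ref{lm:31}. Your write-up merely spells out details the paper leaves implicit (explicit construction of the $m$-dependent approximants, $L^p$ membership, and the exact Lipschitz constants $1/c$ and $|\alpha|$ rather than the ``$1$-Lipschitz'' slip in your opening paragraph), so no substantive difference remains.
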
\label{lm:34}
\begin{proof}
Under (A1), (A2) and by Lemma \ref{lm:32}, $f$ is continuously differentiable and strictly monotone. Therefore it is invertible and $|(f^{-1})'| = |\frac{1}{f'}|$ is bounded, which means $f^{-1}$ is a Lipschitz function. Since (C2) is preserved under Lipschitz transform and $W_t = f^{-1}(Y_t)$, $\{W_t\}$ also satisfies (C2) with same rate $\delta(m)$ as  $\{Y_t\}$.  By Lemma  3.1,  the transform from $W_t$ to $Z_t$ is linear; thus $\{Z_t\}$ also satisfies (C2) with same rate $\delta(m)$.\\
\end{proof}
\subsection{Preliminary results}
Now we quote the necessary theorems relative to our work.
 \begin{theorem}\label{th:35}(Berkes, H\"ormann, Schauer (2009))
\\
	Define $R(s,t) = \sum_{1\leq t\leq n} (I\{Y_k\leq s\} - F_Y(s))$.
	Assume $F_Y$ is $\theta$-Lipschitz continuous, and (C1) holds with $\gamma_m = m^{-C/\theta}$,
 $\delta_m = m^{-C}$ and some $C>4$. Then there exists a two parameter Gaussian process $K(s,t)$ with $E(K(s,t)K(s',t')) = (t\wedge t')\Gamma(s,s')$, such that
	\begin{equation}
	\sup_{s,t} |R(s,t) - K(s,t)| = o(n^{1/2}(\log n)^{-\alpha} ), \ a.s.
	\end{equation}
	for some $\alpha>0$. In addition,
	\begin{equation}\label{eq:36}
\Gamma(s,s') = \sum_{k\in\mathbb{Z}}E(I(Y_0\leq s) - F_Y(s))(I(Y_k\leq s') - F_Y(s'))
	\end{equation}
	 is absolutely convergent for all choices of $s,s'$.
\end{theorem}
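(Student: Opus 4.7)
The plan is to follow the standard strategy for strong approximation of the empirical process under $m$-approximable weak dependence: couple $\{Y_t\}$ to an $m$-dependent surrogate, perform a big-block/small-block decomposition, and apply a Kom\'los--Major--Tusn\'ady (KMT)-type Gaussian approximation for the resulting independent blocks, then let $m=m_n$ grow slowly enough to balance all the errors.

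First, I would replace $\{Y_t\}_{t=1}^n$ by its coupled $m$-dependent copy $\{Y_t^{(m)}\}$ and control the resulting error in $R(s,t)$. The $\theta$-Lipschitz continuity of $F_Y$ is the key ingredient: on the event $\{|Y_k - Y_k^{(m)}|\le \gamma_m\}$ the difference $|I\{Y_k\le s\} - I\{Y_k^{(m)}\le s\}|$ is nonzero only when $s$ lies in a window of $F_Y$-mass $O(\gamma_m^{\theta})$, while the complementary event has probability at most $\delta_m$. Summing over $t$ and taking $\gamma_m = m^{-C/\theta}$, $\delta_m = m^{-C}$ with $m=m_n$ of order $(\log n)^{\beta}$ for an appropriate $\beta$ makes the cumulative replacement error $o(n^{1/2}(\log n)^{-\alpha})$ almost surely.

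Next, for the $m$-dependent process I would partition $\{1,\dots,n\}$ into alternating big blocks of length $p_n$ and small blocks of length $m$, with $p_n \gg m$. By $m$-dependence the big-block contributions are jointly independent, while the small-block contributions are uniformly of order $\sqrt{n m/p_n}$ up to logarithmic factors by a maximal inequality. To the vector-valued big-block sums, indexed by a sufficiently fine grid in $s$, I would apply a multivariate KMT-type theorem, constructing Gaussian vectors with matching covariance on a suitably enlarged probability space. A chaining argument over $s$, using the Lipschitz property of $F_Y$ to interpolate between grid values, then yields the two-parameter Gaussian process $K(s,t)$ with the stated covariance $(t\wedge t')\Gamma(s,s')$ and the claimed uniform rate.

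Finally, absolute convergence of $\Gamma(s,s')$ I would verify by the same coupling idea: replacing $Y_k$ in $\mathrm{Cov}(I\{Y_0\le s\},I\{Y_k\le s'\})$ by the independent copy $Y_k^{(\lfloor k/2\rfloor)}$ kills the covariance exactly, and the replacement error is bounded by $O(\gamma_{k/2}^{\theta}+\delta_{k/2})=O(k^{-C})$, summable since $C>4$. The main obstacle is producing the joint strong approximation uniformly in $s$ at the sharp rate $n^{1/2}(\log n)^{-\alpha}$: one must delicately balance the block length $p_n$, the $m$-approximation parameter $m_n$, and the chaining entropy over the indicator class $\{I\{\cdot\le s\}\}$, and the assumption $C>4$ enters precisely to make this three-way trade-off feasible.
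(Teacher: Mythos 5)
First, be aware that the paper does not prove this statement at all: Theorem 3.5 is quoted from Berkes, H\"ormann and Schauer (2009) as a preliminary result, so there is no internal proof to compare against. Your sketch --- coupling to the $m$-dependent surrogate, a big-block/small-block decomposition, a multivariate KMT-type Gaussian approximation for the independent block sums on a grid in $s$, and chaining via the $\theta$-Lipschitz continuity of $F_Y$ --- is the standard route and is essentially the strategy behind the cited result. Your argument for the absolute convergence of $\Gamma(s,s')$, replacing $Y_k$ by the coupled copy $Y_k^{(\lfloor k/2\rfloor)}$ (which is indeed independent of $Y_0$ under the coupling construction) and bounding the replacement error by $O(\gamma_{\lfloor k/2\rfloor}^{\theta}+\delta_{\lfloor k/2\rfloor})=O(k^{-C})$, is correct and is the usual argument; $C>4$ more than suffices for summability there.

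There is, however, a concrete quantitative gap in your balancing step. With the polynomial rates $\gamma_m=m^{-C/\theta}$ and $\delta_m=m^{-C}$, the choice $m_n\asymp(\log n)^{\beta}$ cannot work: the expected number of indices $k\le n$ at which $I\{Y_k\le s\}$ and $I\{Y_k^{(m)}\le s\}$ differ is of order $n(\gamma_m^{\theta}+\delta_m)\asymp n(\log n)^{-C\beta}$, which dwarfs the target $n^{1/2}(\log n)^{-\alpha}$. To make the replacement error negligible you must take $m_n$ to be a (small) power of $n$, say $m_n\asymp n^{\kappa}$ with $C\kappa>1/2$, and then re-balance the big-block length $p_n$, the small blocks (now of polynomial length), and the grid mesh; this is precisely where the condition $C>4$ earns its keep, and with polylogarithmic $m_n$ the trade-off is infeasible. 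Two further points are glossed over: an almost-sure bound uniform in $s$ and $t$ does not follow from the expectation estimate you invoke --- you need exponential (Bernstein-type) inequalities for the number of $Y_k$ falling in shrinking $F_Y$-windows, uniformly over the $s$-grid, combined with a Borel--Cantelli argument along dyadic ranges of $n$; and the limit must be built as a Kiefer-type process jointly in $t$ (partial sums of the independent Gaussian block vectors), which your outline gestures at but does not pin down. So the plan is the right one, but as written the parameter choice would fail and the a.s.\ uniformity step is missing.
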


\begin{remark} \rm 
	We are more interested in the case $t = n$ such that the above theorem reduces to a 1-dimensional centered Gaussian process $K$ indexed by $s\in \mathbb{R}$. It turns out that tightness of $K$ plays an important role in the accuracy of the estimated $\widehat{U}_t$. Although Theorem \ref{th:35} does not guarantee the limiting Gaussian process to be tight, it is reasonable to believe so and also it can be shown that for series of the form (\ref{eq:31}) tightness can be shown through continuity and boundedness of the covariance structure.
\end{remark}

	 \begin{theorem}\label{th:36}(Berkes, H\"ormann, Schauer (2011))
	 	\\
		Assume (C2) with $p >2$, $\eta \in (0,1)$,  $A >\frac{p-2}{2\eta}(1-\frac{1+\eta}{p})\vee 1$.  Then $\sigma_{\infty}^2 = \sum_{k\in\mathbb{Z}} \gamma_Y(k)$ is absolutely convergent. Also,
		\begin{equation}
		\sum_{k=1}^n \left(Y_k -  EY_0\right) =  W_1(s_n^2) + W_2(t_n^2) + O(n^{(1+\eta)/p}),  \ a.s.
		\end{equation}
where $W_1$ and $W_2$ are two Brownian motions and $s_n^2\sim \sigma_\infty^2n$, $t_n^2\sim cn^{\gamma}$ with $\gamma\in (0,1)$. Here $a_n\sim b_n$ means $\lim_{n\rightarrow\infty} \frac{a_n}{b_n}= 1$; $a\vee b = \max\{a,b\}$.
	\end{theorem}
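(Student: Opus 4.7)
The plan is to establish this strong invariance principle via a standard blocking-plus-embedding strategy adapted to the $m$-approximation setting. First I would verify absolute convergence of $\sigma_\infty^2 = \sum_k \gamma_Y(k)$. Using the coupled construction $Y_t^{(m)}$, the variables $Y_0$ and $Y_k^{(\lfloor k/2\rfloor)}$ are built from disjoint innovations and hence independent; Cauchy--Schwarz then gives $|\gamma_Y(k)| \leq 2\|Y_0\|_2\,\delta(\lfloor k/2\rfloor) \ll k^{-A}$, which is summable since $A>1$.

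For the invariance principle, I would partition $\{1,\dots,n\}$ into alternating big blocks $I_j$ of length $p_n$ and separator blocks $J_j$ of length $q_n$, with $q_n \ll p_n$ to be chosen as powers of $n$, and decompose $S_n = \sum_{k=1}^n (Y_k - EY_0) = S_n^{(\mathrm{big})} + S_n^{(\mathrm{small})}$. Inside each big block I replace $Y_t$ with its coupled $m$-approximation $Y_t^{(q_n)}$, which makes the big-block sums $U_j := \sum_{t\in I_j}(Y_t^{(q_n)} - EY_0)$ independent across $j$ (since blocks are separated by the gap $J_j$ of length $q_n$). For this independent sequence, with finite $p$-th moments, I would invoke a Sakhanenko/KMT-type strong embedding for independent non-identically distributed summands in $L^p$ to obtain a Brownian motion $W_1$ with $\sum_{j\leq k} U_j = W_1\bigl(\sum_{j\leq k}\operatorname{Var} U_j\bigr) + O\bigl((kp_n)^{1/p}\,\mathrm{polylog}(n)\bigr)$ almost surely, and verify $\sum_j \operatorname{Var} U_j \sim \sigma_\infty^2 n$ using the absolute convergence established above together with a quantitative bound on $\sum_k|\gamma_Y(k) - \gamma_{Y^{(q_n)}}(k)|$ via the $m$-approximation rate.

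The residual splits into the $m$-approximation error $\sum_{t\in\mathrm{big}}(Y_t - Y_t^{(q_n)})$, whose $L^p$-norm is bounded via Rosenthal's inequality by $n^{1/2}q_n^{-A} + n^{1/p}q_n^{-A}$ (using $\|Y_t - Y_t^{(m)}\|_p \ll m^{-A}$), and the small-block contribution $S_n^{(\mathrm{small})}$, which involves $O(nq_n/p_n)$ terms. Tail bounds of Marcinkiewicz--Zygmund type combined with Borel--Cantelli upgrade convergence in probability to almost sure. The small-block sum together with leftover variance is absorbed into the second Brownian motion $W_2$ on a lower-order timescale $t_n^2 \sim c n^\gamma$ with $\gamma \in (0,1)$ depending on the choice of $p_n, q_n$.

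The main obstacle is the joint optimization over $p_n$, $q_n$, $\eta$, and $p$: the exponent $A > \frac{p-2}{2\eta}\bigl(1-\frac{1+\eta}{p}\bigr)\vee 1$ is precisely what is needed so that the Sakhanenko embedding remainder, the small-block contribution, and the $m$-approximation error all simultaneously sit inside $W_2(t_n^2) + O(n^{(1+\eta)/p})$. A secondary delicate point is ensuring that the replacement $Y_t \mapsto Y_t^{(q_n)}$ inside the variances of the $U_j$ does not perturb the leading $\sigma_\infty^2 n$ behavior, which requires the quantitative covariance-perturbation estimate mentioned above; this is the price paid for passing to independent summands before embedding.
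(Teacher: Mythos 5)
This statement is not proved in the paper at all: it appears in Section~3.3 as a quoted preliminary result, imported verbatim from Berkes, H\"ormann and Schauer (2011) (the paper's \cite{berkes2011}), and the authors only use its conclusion downstream (e.g.\ in Lemma~4.1). So there is no in-paper proof to compare against; what you have written is an attempt to reconstruct the proof of the cited source, and indeed your outline --- big blocks separated by gaps of length $q_n$, coupling to $m$-dependent approximants inside the big blocks so that block sums become independent, a Sakhanenko/KMT-type strong embedding for independent non-identically distributed $L^p$ summands giving $W_1$ at scale $\sum_j \operatorname{Var} U_j \sim \sigma_\infty^2 n$, and a second Wiener process $W_2$ at the lower scale $t_n^2 \sim c n^\gamma$ absorbing the small-block and residual contributions --- is precisely the ``split'' invariance-principle strategy of that paper, and your covariance-summability argument via $\operatorname{Cov}(Y_0, Y_k^{(\lfloor k/2\rfloor)})=0$ and Cauchy--Schwarz is the standard correct one.

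That said, treat your write-up as a plan rather than a proof: several steps are asserted where the real work lies. Rosenthal's inequality does not apply directly to $\sum_{t}(Y_t - Y_t^{(q_n)})$, since these differences are dependent within a block; you must first exploit the across-block independence (or a martingale/blockwise decomposition) before any moment inequality of that type is legitimate, and the almost-sure upgrade via Borel--Cantelli needs a maximal inequality over a dyadic or geometrically growing subsequence, not just pointwise tail bounds. The claim that the small-block sum can be ``absorbed into'' $W_2(t_n^2)$ also hides a genuine construction: $W_2$ must be built jointly with (and essentially independently of) $W_1$ on a common probability space, which in the original argument requires a separate embedding for the small-block partial sums rather than a mere error estimate. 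Finally, the exponent condition $A > \frac{p-2}{2\eta}\bigl(1-\frac{1+\eta}{p}\bigr)\vee 1$ emerges only after the explicit optimization over the block lengths $p_n, q_n$, which you have deferred; without carrying that computation through, the appearance of the remainder rate $O(n^{(1+\eta)/p})$ is not yet justified. For the purposes of this paper none of this is needed --- citing Berkes, H\"ormann and Schauer (2011), as the authors do, suffices.
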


 \begin{theorem}\label{th:37}(McMurry \& Politis(2010))
 	\\
	For the tapered estimator defined in equation (\ref{eq:23}) with tapering function (\ref{eq:21}), assume (C0) with $2<p\leq4$ for $\{Z_t\}$ and $\sum_{k\in \mathbb{Z}} \sigma(k)<\infty$, with $l = o (n^{\frac{p-2}{p}})$ then, 
	\begin{equation}
\sum_{k=0}^{\lfloor c_{\kappa}l\rfloor}|\widehat{\sigma}(k) - \sigma(k)|\underset{L^{p/2}}{\rightarrow} 0,
	\end{equation}
	and
	\begin{equation}
\norm{\widehat{\Sigma}_n^{(\kappa,l)}- \Sigma_n}_{op} \underset{L^{p/2}}{\rightarrow} 0.
	\end{equation}
	Furthermore, assuming $Z_t$ has spectral density $f_Z$ satisfying $0<c_1 \leq f_Z(w) \leq c_2<\infty, \forall w\in [0,2\pi]$, $\widehat{\Sigma}_n^{(\kappa,l)}$ is positive definite with probability tending to 1, and 
	\begin{equation}
\norm{(\widehat{\Sigma}_n^{(\kappa,l)})^{-1}- \Sigma_n^{-1}}_{op} = o_p(1) 
	\end{equation}
	$$$$
\end{theorem}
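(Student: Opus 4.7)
The plan is to prove the three claims in sequence: (i) the $L^{p/2}$ convergence of $\sum_{k=0}^{\lfloor c_\kappa l\rfloor}|\widehat\sigma(k) - \sigma(k)|$, (ii) the operator-norm convergence of $\widehat\Sigma_n^{(\kappa,l)}$ to $\Sigma_n$, and (iii) the positive-definiteness and inverse convergence. The main workhorse throughout is the $p$-stability hypothesis (C0), which, through the physical dependence framework of \cite{Wu2005b}, yields moment bounds on partial sums of the product processes $\{Z_tZ_{t+k}\}$.

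For step (i), I would first establish a lag-wise bound of the form $\norm{\widehat\sigma(k) - \sigma(k)}_{p/2} \leq C n^{-\alpha_p}$ for some exponent $\alpha_p > 0$ controlled by the $L^{p/2}$ moment of a centered partial sum of the product process. The product series $\{Z_tZ_{t+k}\}$ inherits a $p/2$-stability property from the $p$-stability of $\{Z_t\}$ via a Cauchy--Schwarz split of the coupling difference $Z_tZ_{t+k}-Z_t'Z_{t+k}'$, so Wu's maximal inequality transfers to this setting uniformly over $k$. After accounting for the edge bias from the $k/n$ missing products (negligible because $\sum_k|\sigma(k)|<\infty$), summing the bounds over $k \leq c_\kappa l$ and adding the tapering bias $\sum_k(1-\kappa_l(k))|\sigma(k)|$ yields an $L^{p/2}$ bound that tends to $0$ precisely when $l = o(n^{(p-2)/p})$.

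For step (ii), the key structural observation is that $D := \widehat\Sigma_n^{(\kappa,l)} - \Sigma_n$ is symmetric with $D_{ij} = 0$ whenever $|i-j|>c_\kappa l$, and its nonzero entries depend only on $|i-j|$. Since $\norm{D}_{op} \leq \max_i\sum_j|D_{ij}|$ for symmetric matrices, we obtain
\begin{equation*}
\norm{D}_{op} \;\leq\; 2\sum_{k=0}^{\lfloor c_\kappa l\rfloor}|\widehat\sigma(k) - \sigma(k)| \;+\; 2\sum_{k=0}^{\lfloor c_\kappa l\rfloor}(1-\kappa_l(k))|\sigma(k)| \;+\; 2\sum_{k>c_\kappa l}|\sigma(k)|.
\end{equation*}
The first sum is controlled by step (i); the second and third vanish because $\sum_k|\sigma(k)|<\infty$ and $l\to\infty$ (dominated convergence for the second, tail convergence for the third), giving $\norm{D}_{op}\to 0$ in $L^{p/2}$.

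For step (iii), the spectral-density bound translates via the classical identity $\lambda_{\min}(\Sigma_n) \geq 2\pi\inf_w f_Z(w) \geq 2\pi c_1$ into $\norm{\Sigma_n^{-1}}_{op} \leq (2\pi c_1)^{-1}$. Weyl's inequality then gives $\lambda_{\min}(\widehat\Sigma_n^{(\kappa,l)}) \geq 2\pi c_1 - \norm{D}_{op}$, which is at least $\pi c_1$ with probability tending to $1$ by step (ii); hence $\widehat\Sigma_n^{(\kappa,l)}$ is positive definite w.p.\ tending to $1$ and its inverse has bounded operator norm in probability. The perturbation identity $A^{-1}-B^{-1} = -A^{-1}(A-B)B^{-1}$ then closes the argument: $\norm{(\widehat\Sigma_n^{(\kappa,l)})^{-1} - \Sigma_n^{-1}}_{op}\leq \norm{(\widehat\Sigma_n^{(\kappa,l)})^{-1}}_{op}\norm{\Sigma_n^{-1}}_{op}\norm{D}_{op}=o_p(1)$. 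The hardest part I expect is step (i): getting the $L^{p/2}$-rate for $\widehat\sigma(k)$ to hold \emph{uniformly} in $k\leq c_\kappa l$, because Wu's moment bound is typically stated at fixed $k$ and one must verify that the implicit constants do not degrade with the lag and that the edge and tapering biases balance against the stated bandwidth rate $l = o(n^{(p-2)/p})$.
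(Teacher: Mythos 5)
The paper does not actually prove this statement: Theorem 3.3 is quoted verbatim as a preliminary result from McMurry and Politis (2010), so there is no internal proof to compare against. Your proposal is essentially a correct reconstruction of the original argument, and the one point you flag as hardest is indeed where the bandwidth condition comes from: since $2<p\leq 4$ means $p/2\in(1,2]$, Wu's moment inequality gives partial sums of the centered product process of order $n^{2/p}$ in $L^{p/2}$, hence a per-lag rate $n^{-(p-2)/p}$ for $\norm{\widehat\sigma(k)-\sigma(k)}_{p/2}$; summing over $O(l)$ lags forces $l=o(n^{(p-2)/p})$. The uniformity in $k$ that worries you is unproblematic, because the physical dependence coefficients of $Z_tZ_{t+k}$ satisfy $\delta^{(k)}_{p/2}(j)\leq \norm{Z_0}_p\left(\delta_p(j)+\delta_p(j+k)\right)$, whose sum over $j$ is bounded by $2\norm{Z_0}_p\Delta_p$ uniformly in $k$, so the constants in the maximal inequality do not degrade with the lag. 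Steps (ii) and (iii) are also sound: the symmetric-Toeplitz row-sum bound, the split into estimation error, taper bias and covariance tail, the eigenvalue bound $\lambda_{\min}(\Sigma_n)\geq 2\pi c_1$, Weyl's inequality and the resolvent identity are exactly the ingredients of the cited proof. One small inaccuracy: you assert that $D=\widehat\Sigma_n^{(\kappa,l)}-\Sigma_n$ has $D_{ij}=0$ for $|i-j|>c_\kappa l$; this is false, since only $\widehat\Sigma_n^{(\kappa,l)}$ is banded and $D_{ij}=-\sigma(i-j)$ beyond the band. Your displayed bound already carries the tail term $2\sum_{k>c_\kappa l}|\sigma(k)|$, so the argument survives, but the structural claim preceding it should be corrected.
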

We now proceed  to the proof of bootstrap validity for parameter estimation and prediction. 

\section{Model-free bootstrap validity for estimation}
For the following paragraphs, let $P^*$ denote the probability in the bootstrap world. To show that  \textbf{Algorithm 1} produces asymptotically correct bootstrap confidence interval,  the following type of results will be shown in this section:
\begin{equation}\label{eq:41}
	\sup_{x\in\mathbb{R}} |P^*(\tau_n(\widehat{\theta}^*_n - \widehat{\theta}_n)\leq x) - P(\tau_n(\widehat{\theta}_n - \theta_0)\leq x)| \xrightarrow{n\rightarrow\infty}
	  0,\: in\: probability
\end{equation}
Moving forward all convergence is with respect to $n\rightarrow\infty$ and this notation will be omitted for simplicity.
$\tau_n$ is the   rate of convergence for the estimator $\widehat\theta_n$. 
 In many circumstances, $\tau_n = \sqrt{n}$ and $\widehat\theta_n$ is a $\sqrt{n}-$consistent estimator for $\theta_0$; 
 the most important such case is the mean where $\theta_0 = E(Y_0)$
and $\widehat{\theta}_n = \frac{1}{n}\sum_{t=1}^n Y_t$. Moreover, the class of functions of linear statistics is most useful, namely when 
the statistic can be written as 
\begin{equation}\label{eq:42}
 q(\frac{1}{n-k+1}\sum_{t=1}^{n-k+1} g(Y_t,\cdots,Y_{t+k-1}))
\end{equation}
 where $q: \mathbb{R}^q\rightarrow\mathbb{R}^{\tilde{q}}$ and $g: \mathbb{R}^k \rightarrow \mathbb{R}^q$ are smooth functions.
By the $\delta$-method, the statistic (\ref{eq:42}) inherits the 
 $\sqrt{n}-$ consistency of the linear statistic. The class of statistics of the type (\ref{eq:42})  
includes a wide range of estimators in time series, e.g. sample mean, sample autocovariance and sample autocorrelation, to name a few. The class  (\ref{eq:42})  was   investigated by \cite{kunsch1989}, 
\cite{politis1992}, \cite{buhlmann1997},\cite{Lahiri2003} and \cite{kreiss2011}.

 \subsection{Bootstrap validity for the mean}
 In this section, we focus on model-free bootstrap for the mean $\theta_0 = E(Y_0)$ and  extension to statistic of the form (\ref{eq:42}) will be addressed in later section. Therefore we let $\tau_n = \sqrt{n}$ for later discussions. Typically, convergence of the type (\ref{eq:41}) is proved in a two step procedure:
 \begin{enumerate}
 	\item Asymptotic normality for the estimator and its bootstrapped analogue : 
 	\begin{equation}
 	d_\infty(\sqrt{n}(\widehat{\theta}_n - \theta_0),\mathcal{N}(0,\sigma^2))\rightarrow 0, \sigma^2 < \infty. 
 	\end{equation}
 	\begin{equation}
d_\infty(\sqrt{n}(\widehat{\theta}^*_n - E^*(\widehat{\theta}^*_n)), \mathcal{N}(0,(\sigma^*_n)^2))\rightarrow 0, in \: probability
 	\end{equation}
 	where $d_\infty(\cdot,\cdot)$ denotes the Kolmogorov metric between two (one dimensional) distributions defined by: $$d_\infty(F,G) = \sup_{x\in\mathbb{R}}|F(x) - G(x)|.$$

and where $\sigma^2$ and $(\sigma_n^*)^2$ are the asymptotic variances of the centered and $\sqrt{n}$-scaled estimator $\sqrt{n}(\widehat{\theta}_n - \theta_0)$ in the real world and $\sqrt{n}(\widehat{\theta}^*_n - E^*(\widehat{\theta}^*_n))$ in the bootstrap world, respectively.
 	\item Asymptotically equal variances:
 	\begin{equation}
		(\sigma^*_n)^2 \overset{P}{\rightarrow} \sigma^2.
 	\end{equation}
 \end{enumerate}
\begin{remark} \rm 
	It is believed that asymptotic normality is often necessary for the asymptotic validity of bootstrap for general inference problems, and thus $\tau_n = \sqrt{n}$ is also necessary. For a detailed explanation, see Ch. 6 of \cite{Dehling2002}.
\end{remark}

 Following the two step procedure, we have the following:

\begin{lemma}\label{th:41}
	Let $\{Y_t\}$ satisfy assumptions (A1)-(A4) with $p >2$, $\eta \in (0,1)$, $A >\frac{p-2}{2\eta}(1-\frac{1+\eta}{p})\vee 1$ and $(1 + \eta)/p - 1/2 <0$. Then  
	$\gamma_Y(k)$ is absolutely summable.
	Also, 
	\begin{equation}
		\frac{1}{\sqrt{n}}\sum_{k=1}^n \left(Y_k -  EY_0\right) \overset{d}{\rightarrow}  \mathcal{N}(0,\sigma_\infty^2)
	\end{equation}
	with $\sigma_\infty^2 = \sum_{k\in\mathbb{Z}}\gamma_Y(k)$.
\end{lemma}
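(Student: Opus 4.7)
The plan is to reduce the claim directly to Theorem \ref{th:36} (Berkes, Hörmann, Schauer 2011), which is already stated with exactly the parameter conditions appearing in the lemma. The only real work is verifying the hypothesis of that theorem for $\{Y_t\}$ and then handling the three terms in the almost-sure Komlós–Major–Tusnády style expansion.

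First I would verify that $\{Y_t\}$ satisfies (C2) with the required rate $A$ and exponent $p$. Under (A3) this is immediate; under (A4) the Gaussian driver $\{W_t\}$ satisfies (C2), and since $f$ is explicitly assumed to preserve (C2), $Y_t=f(W_t)$ inherits the property (with possibly a new constant). Because $Y_t\in L^p$ with $p>2$ and the rate condition on $A$ matches the hypothesis of Theorem \ref{th:36}, we conclude that $\sigma_\infty^2=\sum_{k\in\mathbb{Z}}\gamma_Y(k)$ converges absolutely, which gives the first conclusion of the lemma.

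Next, apply Theorem \ref{th:36} to write, almost surely,
\begin{equation*}
S_n := \sum_{k=1}^n (Y_k-EY_0) = W_1(s_n^2)+W_2(t_n^2)+O\!\left(n^{(1+\eta)/p}\right),
\end{equation*}
with $s_n^2\sim \sigma_\infty^2\, n$ and $t_n^2\sim c n^\gamma$, $\gamma\in(0,1)$. Dividing by $\sqrt{n}$, I would treat each term separately. For the leading Brownian term, the scaling property of Brownian motion gives $W_1(s_n^2)/\sqrt{n}\stackrel{d}{=}(s_n/\sqrt{n})\,W_1(1)$, and $s_n/\sqrt{n}\to\sigma_\infty$ implies $W_1(s_n^2)/\sqrt{n}\stackrel{d}{\to}\mathcal{N}(0,\sigma_\infty^2)$. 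For the second Brownian term, $W_2(t_n^2)/\sqrt{n}\stackrel{d}{=}(t_n/\sqrt{n})W_2(1)$, and since $t_n/\sqrt{n}\asymp n^{(\gamma-1)/2}\to 0$, this term vanishes a.s. The remainder $n^{-1/2}\,O(n^{(1+\eta)/p})=O\!\left(n^{(1+\eta)/p-1/2}\right)$ vanishes a.s.\ by the extra assumption $(1+\eta)/p-1/2<0$. Slutsky's theorem then yields $S_n/\sqrt{n}\stackrel{d}{\to}\mathcal{N}(0,\sigma_\infty^2)$.

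I do not expect any serious obstacle: absolute summability and the CLT are packaged into Theorem \ref{th:36}, and the delicate part is simply matching the exponent arithmetic so that the error term beats $\sqrt{n}$. The mild subtlety is that (A4) is given as an alternative to (A3); under (A4) one has to take on faith that $f$ preserves (C2), which the assumption explicitly states, so no additional verification (e.g.\ invoking Lemma \ref{lm:33}(4) for Lipschitz $f$) is strictly required. Assumption (A1)'s Gaussianity of $W_t$ and the spectral density bound play no role in this particular lemma—they are reserved for later results on $\wwidehat{\Sigma}_n$—so the argument is really a one-shot application of Theorem \ref{th:36}.
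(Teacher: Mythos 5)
Your proof is correct and takes essentially the same route as the paper: both arguments reduce the lemma to Theorem \ref{th:36}, obtain absolute summability of $\gamma_Y(k)$ from that theorem, and then dispose of the three terms after $\sqrt{n}$-scaling using $s_n^2/n\to\sigma_\infty^2$, $t_n^2/n\sim cn^{\gamma-1}\to 0$, and $(1+\eta)/p-1/2<0$. The only cosmetic difference is that the paper rescales inside the Brownian motions, writing $W_1(s_n^2/n)+W_2(t_n^2/n)$ so the second term vanishes by path continuity, whereas you argue via $\overset{d}{=}\,(t_n/\sqrt{n})W_2(1)$, which gives convergence to zero in probability rather than a.s.\ --- equally sufficient for the Slutsky step.
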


\begin{proof}
	 By Theorem \ref{th:35}, $\gamma_Y(k)$ is absolutely summable for $k\in\mathbb{Z}$. Then
	\begin{equation}
	\begin{split}
	\sqrt{n}(\bar{Y}_n - EY_0) &= \frac{1}{\sqrt{n}}\left( W_1(s_n^2) + W_2(t_n^2) + O_{ a.s.}(n^{(1+\eta)/p})\right)\\
	& = W_1(s_n^2/n) + W_2(t_n^2/n) + O_{ a.s.}(n^{(1+\eta)/p - 1/2}) \\
	&\overset{d}{\rightarrow} \mathcal{N}(0,\sigma^2_\infty).
	\end{split}
	\end{equation}
	since  $n^{\gamma -1}\rightarrow 0$ and  $n^{(1+\eta)/p - 1/2} \rightarrow0$.
	
\end{proof}

\begin{lemma}\label{th:42}
	Under the assumptions of Lemma \ref{th:41}, and the
additional assumption $\frac{A}{\frac{1}{p} + \frac{1}{\theta}}>4$, we have 
	\begin{equation}\label{eq:49}
			P(|Y_t - Y_t^{(m)}|>m^{-C/\theta})<m^{-C}
	\end{equation}
	with $C>4$, and
	\begin{equation}
	\sup_{t\in[n]}|\widehat{U}_t - U_t| = O_p(\frac{1}{\sqrt{n}})
	\end{equation}
\end{lemma}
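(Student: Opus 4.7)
The claim splits naturally into two essentially independent pieces, and the first is immediate: setting $C := A/\bigl(\tfrac{1}{p}+\tfrac{1}{\theta}\bigr)$, the additional hypothesis $A/\bigl(\tfrac{1}{p}+\tfrac{1}{\theta}\bigr)>4$ is just $C>4$. Since (A3) provides (C2) at rate $\delta(m)\ll m^{-A} = m^{-C(1/p+1/\theta)}$, Lemma \ref{lm:33}(1) applies verbatim and yields the coupling bound $P(|Y_t-Y_t^{(m)}|>m^{-C/\theta})<m^{-C}$ with $C>4$, which is (\ref{eq:49}). No further work is required.

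For the second assertion, the key identity is $\widehat U_t-U_t=\widehat F(Y_t)-F_Y(Y_t)$, so it suffices to show $\sup_y|\widehat F(y)-F_Y(y)|=O_p(n^{-1/2})$. Consider first $\widehat F=\bar F$, the empirical CDF, and let $R(y,n)=n(\bar F(y)-F_Y(y))$. Part 1 (applied with $\gamma_m=m^{-C/\theta}$, $\delta_m=m^{-C}$, $C>4$) together with (A2) verifies the hypotheses of Theorem \ref{th:35}, yielding a centered Gaussian process $K$ with covariance $(t\wedge t')\Gamma(s,s')$, such that
$$\sup_{y}|R(y,n)-K(y,n)|=o\bigl(n^{1/2}(\log n)^{-\alpha}\bigr)\quad \text{a.s.}$$
Hence the problem reduces to bounding $\sup_y|K(y,n)|$. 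The process $y\mapsto K(y,n)/\sqrt n$ is a centered Gaussian process indexed by $\mathbb R$ whose covariance $\Gamma$ (given by (\ref{eq:36})) is absolutely convergent, and is inherited from the bounded, monotone increments $\mathbbm 1\{Y_k\le y\}-F_Y(y)$. Under the form (\ref{eq:31}) and (A2) the covariance $\Gamma$ is continuous and bounded, so the standard tightness argument alluded to in the remark following Theorem \ref{th:35} (Dudley-style chaining against a monotone envelope class) gives $\sup_y|K(y,n)|=O_p(\sqrt n)$. Dividing by $n$ produces the claimed $O_p(n^{-1/2})$ rate. For the smoothed estimator $\widehat F_h$ one decomposes $\widehat F_h-F_Y=(\widehat F_h-K_h\!\ast\!F_Y)+(K_h\!\ast\!F_Y-F_Y)$: the first term is bounded by $\sup_y|\bar F(y)-F_Y(y)|$ via integration by parts against $K_h$, and the second is a deterministic bias of order $O(h)$ using Lipschitz smoothness of $F_Y$, so the rate transfers provided $h\lesssim n^{-1/2}$.

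The main obstacle I anticipate is the Gaussian tightness step: Theorem \ref{th:35} gives the strong approximation but does not itself assert a usable modulus of continuity for $K(\cdot,n)$, and Berkes--H\"ormann--Schauer formulate the result in a way that leaves $\sup_y|K(y,n)|$ unbounded a priori. The work is in converting absolute summability and continuity of $\Gamma$ into a quantitative maximal inequality for the Gaussian process indexed by the half-lines $\{Y\le y\}$. Everything else is either an appeal to a cited result (Part 1, Theorem \ref{th:35}) or a routine bias/stochastic decomposition.
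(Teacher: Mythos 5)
Your first part coincides with the paper's: taking $C=A/(\tfrac1p+\tfrac1\theta)>4$ and invoking Lemma \ref{lm:33}(1) gives (\ref{eq:49}) immediately, and your reduction of the second claim via Theorem \ref{th:35} to showing $\sup_s|K(s,1)|=O_p(1)$ is also exactly the paper's route. The problem is the step you defer: asserting that boundedness and continuity of $\Gamma$ plus a ``standard tightness argument'' give the supremum bound is not a proof, and in fact continuity of the covariance kernel of a centered Gaussian process does \emph{not} by itself imply sample-path boundedness --- one needs a quantitative modulus (an entropy or H\"older-type increment bound). Moreover, Theorem \ref{th:35} only gives absolute convergence of $\Gamma(s,s')$ pointwise; a uniform bound $\sup_s\Gamma(s,s)<\infty$ is not automatic. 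You correctly identify this as the main obstacle, but since it is precisely the content of the paper's proof, leaving it unverified is a genuine gap at the central point of the lemma.

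Concretely, the paper proves the missing maximal inequality as follows: reparametrize $K(s,1)$ as $K'(u)$ with $u=F_Y(s)\in[0,1]$; show $\sup_{t,t'}|\Gamma'(t,t')|<\infty$ by expressing the covariance through the bivariate Gaussian densities of $(W_0,W_k)$ (this is where the structural assumption (\ref{eq:31}), i.e.\ $Y_t=f(W_t)$ with $g=F_Y\circ f$ monotone, is used) and Taylor-expanding $f_W^{(k)}-f_W\otimes f_W$ in the off-diagonal covariance, which yields a bound of order $\sum_k|\sigma_W(k)|+\sum_k\sigma_W^2(k)<\infty$; then verify a Kolmogorov--Chentsov condition $E|K'(t)-K'(s)|^{\alpha}\le C\,\sigma_{s,t}^{\alpha-1}$ with $\sigma^2_{s,t}\le C(t-s)^{\epsilon_0}$ for some $\epsilon_0\in(0,1)$, by the same Gaussian-density expansion restricted to the set $A_{s,t}$ of $W$-values with probability mass $t-s$; and finally conclude $\sup_u|K'(u)|=O_p(1)$ via Borell's inequality. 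So the tightness rests on the specific Gaussian subordination structure and summable $\sigma_W(k)$, not on abstract continuity of $\Gamma$. Your final paragraph on the kernel CDF is outside the scope of this lemma (that is Lemma \ref{th:43}, proved in the paper under (A5)--(A8) including a Donsker assumption); your integration-by-parts reduction of $\sup_y|\widehat F_h(y)-E\widehat F_h(y)|$ to $\sup_y|\bar F(y)-F_Y(y)|$ is a reasonable alternative sketch that would bypass (A7), but it still presupposes the empirical-CDF bound whose proof is the part you have not supplied.
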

\begin{proof}
See Appendix.
\end{proof}
\begin{remark} \rm
	The above lemma argues that the uniform error for the first step transform $\widehat U_t = F_Y(Y_t)$
 is of $O_p(\frac{1}{\sqrt{n}})$ rate. This is important as consistency of $\wwidehat{\Sigma}_n$ to $\Sigma_n$ depends on the consistency of the estimated $\tildehat{Z}_t$ to $Z_t$, which itself depends on the consistency of $\widehat{U}_t$ to $U_t$. 
\end{remark}
To achieve the same result as in Lemma \ref{th:42} with kernel estimators $\widehat{F}_h$, we assume the following:
\begin{assumption}
		(A5) $F_Y$ is $2^{nd}$ order continuously differentiable such that 

		$F_Y^{(2)}(x) =\frac{d^2 F_Y(x)}{dx^2}$ is bounded and H\"older continuous. \\
		(A6) The kernel $K$ is a distribution function with density $k$ satisfying:
		\begin{enumerate}
				\item $k(x)$ is bounded.
				\item $k(x) = k(-x)$.
				\item $\int |x|^pk(x) dx<\infty$.
		\end{enumerate}
		(A7) The class of functions $\mathcal{F}_{K} = \{K(x_0 - x): x_0\in \mathbb{R}\}$ is Donsker with respect to $\{Y_t\}$ under condition (C1).  In other words, 
		there exists a tight Gaussian process $G_h(x), x\in\mathbb{R}$, such that
				\begin{equation}
			\sup_{x\in\mathbb{R}} \left\lvert \frac{1}{\sqrt{n}}\sum_{k=1}^n \left(K(\frac{x - Y_k}{h}) - E\left(K(\frac{x - Y_0}{h})\right) \right)- G_h(x)\right\rvert = o_p(n^{1/2}).
		\end{equation}
		In the above, the covariance of $G_h(x)$ is given by
 $$\Gamma_{G_h}(x,x') = \sum_{k\in \mathbb{Z}}E\left(K(\frac{x-Y_0}{h}) - E(K(\frac{x-Y_0}{h}))\right)\left(K(\frac{x' - Y_k}{h}) - E(K(\frac{x'-Y_k}{h}))\right).$$
		\\
		(A8) $h = o(n^{-1/4})$.
\end{assumption}
\begin{remark}\rm
	 The purpose of assumption (A7) is to help establish tightness for the supremum of the empirical process, which helps establish the uniform $O_p(\frac{1}{\sqrt n})$ error rate for the first transform. It is not yet clear if the $m-$approximable condition can lead to (A7); however under similar dependence conditions, (A7) is shown to be correct. For example Theorem 2.1 in \cite{Arcones1994} established such a result for $\beta-$mixing stationary series with $\mathcal{F}_K$ a V-C subgraph class of functions.
\end{remark} 
\begin{lemma}\label{th:43}
Let $\widehat{U}^{(h)}_t = \widehat F_h(Y_t)$.	With assumptions (A1)--(A8) holding,  
		$$\sup_{t\in[n]}|\widehat{U}_t^{(h)} - U_t| = O_p(\frac{1}{\sqrt{n}}).$$
\end{lemma}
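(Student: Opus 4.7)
The plan is to reduce the uniform bound on $|\widehat{U}_t^{(h)} - U_t|$ to a uniform bound on the kernel CDF estimator itself, then split into bias and stochastic parts and handle each with a separate assumption. Since $\widehat{U}_t^{(h)} - U_t = \widehat{F}_h(Y_t) - F_Y(Y_t)$, we immediately have
\begin{equation*}
\sup_{t\in[n]} |\widehat{U}_t^{(h)} - U_t| \leq \sup_{y\in\mathbb{R}} |\widehat{F}_h(y) - F_Y(y)|,
\end{equation*}
so it suffices to establish $\sup_y |\widehat{F}_h(y) - F_Y(y)| = O_p(n^{-1/2})$. The natural decomposition is
\begin{equation*}
\widehat{F}_h(y) - F_Y(y) = \bigl(\widehat{F}_h(y) - E\widehat{F}_h(y)\bigr) + \bigl(E\widehat{F}_h(y) - F_Y(y)\bigr),
\end{equation*}
a stochastic term plus a bias term, which I would control separately.

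For the bias, a standard change of variables gives $E\widehat{F}_h(y) = \int K((y-u)/h)\, dF_Y(u)$; integrating by parts and substituting $v = (y-u)/h$ yields $E\widehat{F}_h(y) = \int F_Y(y-hv)\, k(v)\, dv$. A second-order Taylor expansion of $F_Y$ together with symmetry of $k$ from (A6) and boundedness of $F_Y^{(2)}$ and $\int v^2 k(v)\, dv$ from (A5) and (A6) gives
\begin{equation*}
\sup_y |E\widehat{F}_h(y) - F_Y(y)| = O(h^2).
\end{equation*}
By (A8), $h = o(n^{-1/4})$, so $h^2 = o(n^{-1/2})$ and the bias is negligible for our target rate.

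For the stochastic term, assumption (A7) provides a tight centered Gaussian process $G_h$ such that $\sqrt{n}(\widehat{F}_h(y) - E\widehat{F}_h(y))$ is uniformly approximated by $G_h(y)$. Since $G_h$ is tight on $\mathbb{R}$ with continuous sample paths and bounded covariance structure $\Gamma_{G_h}$, one has $\sup_y |G_h(y)| = O_p(1)$, and therefore
\begin{equation*}
\sup_y |\widehat{F}_h(y) - E\widehat{F}_h(y)| = O_p(n^{-1/2}).
\end{equation*}
Combining the two bounds yields $\sup_y |\widehat{F}_h(y) - F_Y(y)| = O_p(n^{-1/2})$, which proves the lemma.

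The main obstacle I anticipate is a technical one: interpreting the approximation in (A7) correctly so that the supremum of $G_h$ is genuinely $O_p(1)$, since the stated remainder $o_p(n^{1/2})$ is of the same size as $\sqrt{n}$ itself and would by itself be useless. I would read (A7) as asserting Donsker-type tightness of $\sqrt{n}(\widehat{F}_h - E\widehat{F}_h)$, which is the usual meaning of a Donsker class and is what is needed here; under this reading the remainder plays no role beyond providing the Skorokhod-type coupling, and the uniform control on $G_h$ follows from the boundedness and continuity of $\Gamma_{G_h}$ (guaranteed by (A6), (A5), and the fact that $\{Y_t\}$ satisfies (C1) via Lemma~\ref{th:42}). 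Everything else is routine once bias and stochastic parts are separated.
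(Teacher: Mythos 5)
Your proposal is correct and follows essentially the same route as the paper's proof: the same bias/stochastic decomposition, the same second-order Taylor argument under (A5), (A6), (A8) for the bias (the paper simply cites the corresponding calculation in the kernel-CDF literature), and the same appeal to (A7) plus tightness of $G_h$ for the $O_p(n^{-1/2})$ stochastic bound. Your reading of (A7) as giving an $o_p(1)$ remainder after $\sqrt{n}$-scaling is also exactly how the paper uses it in its own proof, so the stated $o_p(n^{1/2})$ there is best viewed as a typo rather than an obstacle.
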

\begin{proof}
	See Appendix.
\end{proof}

With assumptions (A1)-(A8) along with Lemma \ref{th:42} and \ref{th:43} holding, we can prove 
 MFB validity results using either the empirical CDF estimator $\bar F$ or the kernel CDF estimator $\widehat F_h$. Therefore in the following results, we only consider a general CDF estimator $\widehat F$ that can represent either $\bar F$ or $\widehat F_h$.
\begin{theorem}\label{th:44}
	Assume the  conditions of Lemma \ref{th:42} or  \ref{th:43}, with appropriate rate for $l(n) \rightarrow\infty$ and $c(n)\rightarrow\infty$. Then  
	\begin{equation}\label{eq:413}
		\sum_{k=0}^{\lfloor c_{\kappa}l\rfloor}|\wwidehat{\sigma}(k) - \sigma(k)|\overset{P}{\rightarrow} 0
	\end{equation}
	which implies 
	\begin{equation}\label{eq:414}
			\norm{\wwidehat{\Sigma}_n - {\Sigma}_n}_{op}\overset{P}{\rightarrow}0
	\end{equation}
and
	\begin{equation}\label{eq:415}
\norm{\wwidehat{\Sigma}_n^{-1} - {\Sigma}_n^{-1}}_{op}\overset{P}{\rightarrow}0.
\end{equation}
\end{theorem}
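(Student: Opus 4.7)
The plan is to reduce the claim to Theorem~\ref{th:37} by interposing the \emph{ideal} (infeasible) tapered autocovariances $\widehat{\sigma}(k)$ formed from the latent series $\{Z_t\}$, writing $|\wwidehat{\sigma}(k) - \sigma(k)| \leq |\wwidehat{\sigma}(k) - \widehat{\sigma}(k)| + |\widehat{\sigma}(k) - \sigma(k)|$. By the lemma immediately following Lemma~\ref{lm:33} (which shows $\{W_t\}, \{Z_t\}$ inherit (C2) from $\{Y_t\}$) together with the (C2) $\Rightarrow$ (C0) implication in Lemma~\ref{lm:33}, the latent $\{Z_t\}$ satisfies the physical-dependence condition (C0); its spectral density is bounded and bounded away from zero by (A1) together with Lemma~\ref{lm:31}. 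Thus Theorem~\ref{th:37} applies verbatim to $\{Z_t\}$ and gives $\sum_{k=0}^{\lfloor c_\kappa l\rfloor}|\widehat{\sigma}(k) - \sigma(k)| \overset{P}{\rightarrow} 0$ provided $l = o(n^{(p-2)/p})$. All remaining work for (\ref{eq:413}) concerns the ``feasibility error'' $\sum_k |\wwidehat{\sigma}(k) - \widehat{\sigma}(k)|$.

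\textbf{Feasibility error.} Put $D_t := \tildehat{Z}_t - Z_t$. The product identity $\tildehat{Z}_t\tildehat{Z}_{t+k} - Z_t Z_{t+k} = Z_t D_{t+k} + D_t Z_{t+k} + D_t D_{t+k}$ combined with Cauchy--Schwarz yields the uniform-in-$k$ bound
\[ |\wwidehat{\sigma}(k) - \widehat{\sigma}(k)| \;\leq\; 2\sqrt{\widehat{R}(0)}\,\|D\|_n + \|D\|_n^2, \qquad \|D\|_n^2 := \tfrac{1}{n}\textstyle\sum_t D_t^2, \]
where $\widehat{R}(0) := \tfrac{1}{n}\sum_t Z_t^2 = O_p(1)$ by stationarity. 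Summing over the $O(l)$ relevant lags reduces the task to showing $l\|D\|_n \overset{P}{\rightarrow} 0$. Split $D_t = A_t + B_t$ with $A_t := \tilde\Phi^{-1}(\widehat{U}_t) - \tilde\Phi^{-1}(U_t)$ (first-transform estimation error) and $B_t := \tilde\Phi^{-1}(U_t) - Z_t$ (thresholding bias). Since $\tilde\Phi^{-1}$ is Lipschitz on $[0,1]$ with constant $1/\phi(c) = \sqrt{2\pi}\,e^{c^2/2}$, and $\sup_t|\widehat{U}_t - U_t| = O_p(n^{-1/2})$ by Lemma~\ref{th:42} (or Lemma~\ref{th:43} when $\widehat F = \widehat F_h$), we get $\|A\|_n^2 = O_p(e^{c^2}/n)$. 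The truncation bias satisfies $|B_t| = (|Z_t|-c)_+$, and the standard Gaussian tail estimate gives $E B_t^2 \leq 4\phi(c)/c^3 = O(e^{-c^2/2}/c^3)$, so Markov's inequality yields $\|B\|_n^2 = O_p(e^{-c^2/2}/c^3)$. Hence $l\|D\|_n \overset{P}{\rightarrow} 0$ whenever $c(n), l(n)$ are chosen so that $l^2 e^{c^2}/n \to 0$ and $l^2 e^{-c^2/2} \to 0$; for instance $l = n^\beta$ with $\beta < 1/6$ and $c(n)^2 = 3\log l(n)$ satisfy both, within the rate window already required by Theorem~\ref{th:37}.

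\textbf{Matrix and inverse norms.} For (\ref{eq:414}), decompose $\wwidehat{\Sigma}_n - \Sigma_n = (\wwidehat{\Sigma}_n - \widehat{\Sigma}_n) + (\widehat{\Sigma}_n - \Sigma_n)$. Theorem~\ref{th:37} controls the second summand; the first is a symmetric banded Toeplitz matrix of bandwidth $c_\kappa l$, so the row-sum (Gershgorin) bound gives $\|\wwidehat{\Sigma}_n - \widehat{\Sigma}_n\|_{op} \leq 2\sum_{k=0}^{\lfloor c_\kappa l\rfloor}|\wwidehat{\sigma}(k) - \widehat{\sigma}(k)| = o_p(1)$ by (\ref{eq:413}). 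For (\ref{eq:415}) apply the resolvent identity $\wwidehat{\Sigma}_n^{-1} - \Sigma_n^{-1} = -\wwidehat{\Sigma}_n^{-1}(\wwidehat{\Sigma}_n - \Sigma_n)\Sigma_n^{-1}$: $\|\Sigma_n^{-1}\|_{op} = O(1)$ because the spectral density of $\{Z_t\}$ is bounded away from zero by (A1) with Lemma~\ref{lm:31}, while the Neumann perturbation estimate $\|\wwidehat{\Sigma}_n^{-1}\|_{op} \leq \|\Sigma_n^{-1}\|_{op}/(1 - \|\Sigma_n^{-1}\|_{op}\|\wwidehat{\Sigma}_n - \Sigma_n\|_{op})$ is $O_p(1)$ once (\ref{eq:414}) holds. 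The main technical obstacle is the simultaneous choice of $c(n)$ and $l(n)$: $c$ must be large enough to make the truncation bias $\|B\|_n$ negligible, yet small enough that the Lipschitz blow-up $1/\phi(c) \sim e^{c^2/2}$ does not overwhelm the $n^{-1/2}$ error in the first transform; together with Theorem~\ref{th:37}'s rate condition on $l$, these constraints leave only a narrow but non-empty admissible window.
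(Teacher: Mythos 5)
Your proposal is correct and takes essentially the same route as the paper's proof: the same split of $\wwidehat{\sigma}(k)-\sigma(k)$ into the latent-series error (handled by Theorem~\ref{th:37} after noting $\{Z_t\}$ inherits (C2) and hence (C0)) and a feasibility error controlled by the Lipschitz constant $1/\phi(c)\asymp e^{c^2/2}$ of $\tilde\Phi^{-1}$ against the uniform $O_p(n^{-1/2})$ error of $\widehat U_t$ plus a Gaussian-tail thresholding bias of order $e^{-c^2/2}$, followed by row-sum/Toeplitz bounds for the operator norm and the spectral-density lower bound for the inverse; your Cauchy--Schwarz bookkeeping with $D_t=A_t+B_t$ and the resolvent identity are only cosmetic variants of the paper's $T_1/T_2$ product split and its appeal to McMurry--Politis for the inverse. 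One minor slip: your illustrative choice $c(n)^2=3\log l(n)$ violates your own condition $l^2e^{-c^2/2}\to 0$ (it gives $l^{1/2}\to\infty$); taking instead, e.g., $c^2=5\log l$ with $l=o(n^{1/7})$ (and $l=o(n^{(p-2)/p})$) works, so the admissible window is indeed non-empty, just not the one you wrote.
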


\begin{proof} 
	See Appendix.
\end{proof}
With the above results, we can show that the pseudo data generated by MF and LMF bootstrap procedure converge in distribution to the true distribution, in probability.
Before proceeding, we state one more assumption:

\begin{assumption}
(A9) $\norm{\wwidehat{\Sigma}_n^{1/2} - \Sigma_n^{1/2}}_{op} = o_p(1)$ and $\norm{\wwidehat{\Sigma}_n^{-1/2} - \Sigma_n^{-1/2}}_{op} = o_p(1)$.
\end{assumption}
Assumption (A9) holds if we have an additional mild convergence rate for the estimator towards the true covariance matrix, which can be achieved by choosing appropriate tapering parameter $l(n)\rightarrow\infty$ and normal thresholding parameter $c(n)\rightarrow\infty$.  By Theorem 2.1 of \cite{Drmac:1994:PCF:196045.196081}, if 
\begin{equation}\label{eq:419}
(\log n)^2 \norm{\wwidehat{\Sigma}_n - \Sigma_n}_{op} \overset{P}{\rightarrow} 0
\end{equation}
then (A9) holds; see also \cite{Jentsch} and the Corrigendum of \cite{mcmurrypolitis2010}).

\begin{lemma}\label{lm:45}
	Assume (A1)-(A8).
	(1) For LMF bootstrap, $\forall d\in\mathbb{N}$,
	\begin{equation}\label{eq:417}
	(Y_{t_1}^*,Y_{t_2}^*,\cdots,Y_{t_d}^*)\overset{d^*}{\rightarrow} (Y_{t_1},Y_{t_2},\cdots,Y_{t_d}), \text{in probability}.
	\end{equation}

	(2) Further assume  (A9).
	Then the model-free procedure is asympototically equivalent to limit model-free procedure. i.e.,
the infinite sequence 
	\begin{equation}\label{eq:418}
	(\xi_1^*,\xi_2^*,\cdots)\text{ converges  in  distribution to }
(\zeta_1, \zeta_2,\cdots)
 	\text{  in probability}  
	\end{equation}
where $(\zeta_1, \zeta_2,\cdots)$ is an infinite sequence
with entries being i.i.d.~$\mathcal{N}(0,1)$. Furthermore, 
  equation (\ref{eq:417}) holds for the model-free bootstrap.
\end{lemma}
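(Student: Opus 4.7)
\textbf{Proof plan for Lemma \ref{lm:45}.}

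\textbf{Part (1) -- LMF.} The plan is to first pass the convergence from $\wwidehat{\Sigma}_n$ to the Gaussian vector $\underline Z_n^{*}$, then transport it to $\underline Y_n^{*}$ by continuous mapping. By construction, conditional on the data, $\underline Z_n^{*}\sim\mathcal{N}(\mathbf{0},\wwidehat{\Sigma}_n)$, so any finite subvector $(Z_{t_1}^{*},\ldots,Z_{t_d}^{*})$ is Gaussian with covariance equal to the principal submatrix of $\wwidehat{\Sigma}_n$ at indices $t_1,\ldots,t_d$. Theorem \ref{th:44} forces $\norm{\wwidehat{\Sigma}_n-\Sigma_n}_{op}=o_p(1)$, hence each entry of this $d\times d$ submatrix converges in probability to $\sigma_Z(t_j-t_i)$. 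Matching the characteristic functions of two centered $d$-variate Gaussians whose covariances agree asymptotically yields $(Z_{t_1}^{*},\ldots,Z_{t_d}^{*})\overset{d^{*}}{\rightarrow}(Z_{t_1},\ldots,Z_{t_d})$ in probability. I would then invoke continuous mapping for the coordinate-wise map $z\mapsto\widehat{F}^{-1}(\Phi(z))$: uniform consistency of $\widehat{F}$ to $F_Y$ (Lemma \ref{th:42} or \ref{th:43}), combined with continuity and strict monotonicity of $F_Y^{-1}$ from (A2), gives $\widehat{F}^{-1}(p)\rightarrow F_Y^{-1}(p)$ locally uniformly, whence (\ref{eq:417}) follows for the LMF procedure.

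\textbf{Part (2) -- MF.} The key intermediate claim is that the empirical CDF $\bar F_{\widehat\xi}$ of $\{\widehat\xi_t\}_{t=1}^n$ converges uniformly in probability to $\Phi$; once this holds, sampling $\xi_t^{*}$ i.i.d. from $\bar F_{\widehat\xi}$ becomes asymptotically indistinguishable from sampling i.i.d. $\mathcal{N}(0,1)$, so (\ref{eq:418}) follows on every finite margin (and hence for the whole sequence by Kolmogorov consistency and Cram\'er--Wold), and plugging this into the argument of Part (1) recovers (\ref{eq:417}) for MF. To establish $\sup_x|\bar F_{\widehat\xi}(x)-\Phi(x)|=o_p(1)$, let $\underline\xi_n=\Sigma_n^{-1/2}\underline Z_n$. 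By Lemma \ref{lm:31} under (A1), $(Z_1,\ldots,Z_n)$ is jointly Gaussian, so $\xi_1,\ldots,\xi_n$ are genuinely i.i.d.~$\mathcal{N}(0,1)$ and Glivenko--Cantelli gives $\sup_x|\bar F_\xi(x)-\Phi(x)|=o(1)$ a.s. Writing
\begin{equation*}
\widehat{\underline\xi}_n-\underline\xi_n=(\wwidehat{\Sigma}_n^{-1/2}-\Sigma_n^{-1/2})\underline Z_n+\wwidehat{\Sigma}_n^{-1/2}(\tildehat{\underline Z}_n-\underline Z_n),
\end{equation*}
assumption (A9) plus $\tfrac{1}{n}\norm{\underline Z_n}_2^2\to 1$ a.s.~controls the first summand, while the second is handled by Lemma \ref{th:42}/\ref{th:43} and a Taylor expansion of $\tilde\Phi^{-1}$, giving $\tfrac{1}{n}\norm{\widehat{\underline\xi}_n-\underline\xi_n}_2^2=o_p(1)$. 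The indicator splitting
\begin{equation*}
|\bar F_{\widehat\xi}(x)-\bar F_\xi(x)|\leq \tfrac{1}{n}\sum_{t=1}^n I\{|\xi_t-x|\leq\epsilon\}+\tfrac{1}{n}\sum_{t=1}^n I\{|\widehat\xi_t-\xi_t|>\epsilon\},
\end{equation*}
used with Lipschitz continuity of $\Phi$ and a Markov bound for the tail, then produces the desired uniform convergence, and a diagonal $\epsilon=\epsilon(n)\to 0$ finishes the job.

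\textbf{Main obstacle.} The hard part is controlling $\tfrac{1}{n}\norm{\tildehat{\underline Z}_n-\underline Z_n}_2^2$. Pointwise, $\tildehat Z_t-Z_t=\tilde\Phi^{-1}(\widehat U_t)-\Phi^{-1}(U_t)$ misbehaves when $U_t$ is near $0$ or $1$, because $\Phi^{-1}$ has unbounded derivative at the endpoints. Thresholding at level $c$ tames this in the bulk, but one must simultaneously let $c=c(n)\to\infty$ slowly enough relative to the uniform rate $O_p(n^{-1/2})$ for $\widehat U_t-U_t$ (Lemma \ref{th:42}/\ref{th:43}) so that both the tail contribution from $\{|Z_t|>c(n)\}$ and the local derivative blow-up near the endpoints wash out on average; the same calibration must be compatible with the rate requirements in Theorem \ref{th:44} so that (A9) still holds. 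Nailing down this coupling between $c(n)$, $l(n)$ and the empirical-process rate is where the majority of the technical effort is concentrated.
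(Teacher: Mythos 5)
Your proposal is correct in outline, and Part (1) follows essentially the paper's route: covariance consistency from Theorem \ref{th:44}, exact conditional Gaussianity of $\underline Z_n^*$ under LMF, and then the map $z\mapsto\widehat F^{-1}(\Phi(z))$ handled by uniform consistency of $\widehat F$ plus quantile convergence (the paper packages this as its Lemmas 7.1--7.2 and the continuous mapping theorem, where you spell out the characteristic-function comparison explicitly). Part (2), however, takes a genuinely different path. The paper proves per-coordinate convergence $\widehat{\underline\xi}_d\overset{P}{\rightarrow}\underline\xi_d$ for each fixed $d$, lifts this to convergence in probability of the whole sequence in the metric $\rho(\widehat\xi,\xi)=\sum_k 2^{-k}|\widehat\xi_k-\xi_k|$ via Kallenberg's Lemma 3.4, then argues $\bar F_{\widehat\xi}(x)-\bar F_{\xi}(x)\overset{P}{\rightarrow}0$ a.e.\ by a continuous-mapping step on the empirical-CDF functional, and upgrades to uniformity via Glivenko--Cantelli for $\bar F_\xi$ and P\'olya's theorem. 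You instead prove the quantitative Ces\`aro bound $\tfrac1n\|\widehat{\underline\xi}_n-\underline\xi_n\|_2^2=o_p(1)$ from (A9) and the closeness of $\tildehat{\underline Z}_n$ to $\underline Z_n$, and convert it to $\sup_x|\bar F_{\widehat\xi}(x)-\bar F_\xi(x)|=o_p(1)$ by the indicator-splitting/Markov device with a diagonal $\epsilon(n)\to0$. Your version weights all $n$ coordinates equally, which is exactly what the empirical CDF requires, so it is arguably more robust at precisely the point where the paper relies on a metric that down-weights later coordinates; the price is that you must control the \emph{averaged} error $\tfrac1n\|\tildehat{\underline Z}_n-\underline Z_n\|_2^2$, i.e.\ the calibration of $c(n)$ against the $O_p(n^{-1/2})$ uniform rate, which you correctly flag as the main technical burden but only sketch (the paper needs only fixed-coordinate convergence $\tildehat Z_k\overset{P}{\rightarrow}Z_k$, though its own Theorem \ref{th:44} proof already performs the analogous $c(n),l(n)$ calibration you would reuse). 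One shared gloss: for the MF case, passing from ``$\xi_t^*$ asymptotically i.i.d.\ $\mathcal N(0,1)$'' to (\ref{eq:417}) involves $Z_t^*$ being a weighted sum of all $n$ non-Gaussian bootstrap draws, which neither your ``plug into Part (1)'' remark nor the paper's ``the remaining proof goes back to (1)'' fully elaborates.
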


\begin{proof}
	See Appendix.
\end{proof}

Let  $\gamma_{Y}^*(k)$ and $\Gamma_n^*$ denote the bootstrap analogues of $\gamma_Y(k)$
and $\Gamma_{n}$. The long-run variance in the bootstrap world
is then  $(\sigma_\infty^*)^2 =  \sum_{k\in\mathbb{Z}}\gamma_Y^*(k)$.


\begin{theorem}\label{th:46}
Under (A1)-(A9). For both MF bootstrap and LMF bootstrap
\begin{equation}
(\sigma_\infty^*)^2 \rightarrow \sigma_\infty^2, in\: probability.
\end{equation}
and
\begin{equation}
\sup_{x\in \mathbb{R}} \left\lvert P^*(\sqrt{n}(\bar{Y}_n^* - E^*(Y_t^*))\leq x) - P(\sqrt{n}(\bar{Y}_n - E(Y_0))\leq x)\right\rvert\overset{P}{\rightarrow} 0.
\end{equation}

\end{theorem}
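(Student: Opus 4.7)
The plan is to execute the two-step program outlined at the start of Section 4: prove a CLT for the real-world partial sum (which is already Lemma \ref{th:41}) and an analogous CLT for the bootstrap partial sum, and then establish $(\sigma_\infty^*)^2 \overset{P}{\rightarrow} \sigma_\infty^2$. The Kolmogorov-distance conclusion then follows from a triangle inequality together with continuity of the normal CDF in its variance.

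For the variance convergence, I would write each lag-$k$ autocovariance as a smooth functional of the underlying $Z$-covariance via the Hermite-type integral representation
\begin{equation*}
\gamma_Y(k) = \int_{\mathbb{R}^2} F_Y^{-1}(\Phi(z_1))\,F_Y^{-1}(\Phi(z_2))\,\phi_{\sigma_Z(k)}(z_1,z_2)\,dz_1\,dz_2 - (EY_0)^2,
\end{equation*}
and the obvious bootstrap analogue with $\widehat F$ in place of $F_Y$ and $\wwidehat\sigma(k)$ in place of $\sigma_Z(k)$. Pointwise in $k$, uniform consistency of $\widehat F$ from Lemmas \ref{th:42} and \ref{th:43}, together with the entrywise consistency $\wwidehat\sigma(k)\overset{P}{\rightarrow}\sigma_Z(k)$ contained in Theorem \ref{th:44}, delivers $\gamma_Y^*(k)\overset{P}{\rightarrow}\gamma_Y(k)$. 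For the infinite sum, I would exploit the tapering: $\wwidehat\sigma(k)=0$ for $k > c_\kappa l$, so at those lags $Z_0^*$ and $Z_k^*$ are independent thresholded-Gaussians, forcing $\gamma_Y^*(k)=0$; meanwhile absolute summability of $\sigma_Z$ from Theorem \ref{th:36} plus the $L^{p/2}$ summability $\sum_{k\le c_\kappa l}|\wwidehat\sigma(k) - \sigma_Z(k)| \to 0$ allow a dominated-convergence argument to pass through the smooth bivariate-Gaussian functional above.

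For the bootstrap CLT, I would apply Theorem \ref{th:36} to the bootstrap sequence $\{Y_t^*\}$ conditional on the data. Under LMF the process $\underline Z_n^*$ is exactly Gaussian with covariance $\wwidehat\Sigma_n$, and $\wwidehat\Sigma_n\to\Sigma_n$ in operator norm by Theorem \ref{th:44}, so a coupling-type $m$-approximation of $Y_t^*$ can be built by truncating the Wold expansion induced by $\wwidehat\Sigma_n$ and then composing with $\widehat F^{-1}\circ\Phi$, which restricted to the thresholded range $[\tilde\Phi^{-1}(0),\tilde\Phi^{-1}(1)]$ is Lipschitz uniformly in $n$ thanks to (A2) and assumption (A9). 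This yields condition (C2) for $\{Y_t^*\}$ with a rate matching that of $\{Y_t\}$, hence $\sqrt{n}(\bar Y_n^* - E^* Y_t^*) \overset{d^*}{\rightarrow} \mathcal{N}(0,(\sigma_\infty^*)^2)$ in probability. Lemma \ref{lm:45}(2) reduces the MF case to LMF.

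Finally, assembling the three pieces, the bound
\begin{equation*}
d_\infty\big(\mathcal{L}^*(\sqrt n(\bar Y_n^*-E^*Y_t^*)),\,\mathcal{L}(\sqrt n(\bar Y_n-EY_0))\big) \le \text{(bootstrap CLT error)} + d_\infty\big(\mathcal{N}(0,(\sigma_\infty^*)^2),\mathcal{N}(0,\sigma_\infty^2)\big) + \text{(real CLT error)}
\end{equation*}
is controlled by letting each of the three summands tend to zero in probability, the middle one via continuity of $v\mapsto\mathcal{N}(0,v)$ in Kolmogorov distance when $\sigma_\infty^2 > 0$. I expect the hardest step to be the variance convergence: passing from $L^{p/2}$ control of $\sum_{k\le c_\kappa l}|\wwidehat\sigma(k)-\sigma_Z(k)|$ to control of $\sum_k \gamma_Y^*(k)$ demands uniform smoothness of the nonlinear functional mapping $Z$-covariances to $Y$-covariances, which must be verified using boundedness of $\widehat F^{-1}\circ\tilde\Phi$ from (A2) together with the thresholding parameter $c$ to prevent blow-up at the tails of $\Phi^{-1}$.
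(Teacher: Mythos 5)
Your overall skeleton --- the real-world CLT from Lemma \ref{th:41}, a conditional CLT for $\bar Y_n^*$, convergence of the long-run variances, and a final triangle inequality using continuity of $\mathcal{N}(0,v)$ in $v$ --- is exactly the two-step program the paper lays out at the start of Section 4.1, and your treatment of the bootstrap CLT (the banded $\wwidehat{\Sigma}_n$ together with i.i.d.\ innovations makes $\{Y_t^*\}$ an $\lfloor c_\kappa l\rfloor$-dependent series satisfying (C2) in probability, so the invariance principle of Theorem \ref{th:36} applies conditionally) matches the paper's argument, which also reduces MF to LMF via Lemma \ref{lm:45}. Where you genuinely diverge is the variance step: you represent $\gamma_Y(k)$ and $\gamma_Y^*(k)$ as bivariate-Gaussian integrals of $F_Y^{-1}\circ\Phi$ (resp.\ $\widehat F^{-1}\circ\Phi$) against the density with covariance $\sigma_Z(k)$ (resp.\ $\wwidehat\sigma(k)$), note that the taper forces $\gamma_Y^*(k)=0$ beyond lag $c_\kappa l$, and push $\sum_{k\le c_\kappa l}|\wwidehat\sigma(k)-\sigma_Z(k)|\overset{P}{\to}0$ from Theorem \ref{th:44} through this functional by a Taylor-expansion/dominated-convergence bound --- a technique the paper itself uses for the covariance kernel in the proof of Lemma \ref{th:42}. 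The paper's published proof of Theorem \ref{th:46} argues differently: $m$-dependence makes $(\sigma_\infty^*)^2$ a finite sum, an argument in the style of B\"uhlmann (1997, Theorem 3.3) yields $\sum_{|k|\le M}|\gamma_Y^*(k)-\gamma_Y(k)|=o_p(1)$ for each fixed $M$, and absolute summability of the two autocovariance sequences handles the tails. Your route is more explicit and self-contained (it needs only the consistency results already proved), at the cost of having to verify integrability of the Gaussian functional near the tails, which you correctly flag as the delicate point. Two cautions on details: attributing uniform-in-$n$ Lipschitzness of $\widehat F^{-1}\circ\Phi$ on the thresholded range to (A2) and (A9) is off --- (A9) concerns matrix square roots, $\bar F^{-1}$ is a step function and never Lipschitz, and the Lipschitz constant of $F_Y^{-1}$ over $[\Phi(-c),\Phi(c)]$ may diverge as $c(n)\to\infty$; what is actually needed (and what the paper invokes) is only the uniform $(2+\delta)$-moment bound for $Y_t^*$ inherited from $Y_t\in L^p$ together with $m$-dependence. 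Likewise, ``independent thresholded Gaussians beyond the band'' literally covers only LMF; for MF the same independence holds because the Cholesky factor of the banded Toeplitz estimate is itself banded, so $Y_t^*$ and $Y_{t+k}^*$ depend on disjoint blocks of the i.i.d.\ $\xi^*$'s, which is precisely how the paper phrases it.
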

\begin{proof}
	See Appendix.
\end{proof}

\subsection{Bootstrap validity for smooth functions of linear statistics}
\label{section:vsfls}

Now that the model-free bootstrap validity for the mean has been shown, we can extend the result for statistics of the form (\ref{eq:42}). For convenience, denote $X_t  = (Y_t,\cdots,Y_{t+k-1})$, 
 $\widehat{ \theta}_n = \frac{1}{n-k+1} \sum_{t=1}^{n-k+1} g(X_t)$ and $\theta_0 = E(g(X_t))$ with bootstrap analogue $\widehat{ \theta}_n^* = \frac{1}{n-k+1} \sum_{t=1}^{n-k+1} g(X_t^*)$ and $\theta^*_0 = E^*(g(X_t^*))$, respectively. In order to extend previous results using same proof strategy, we need assumptions such that the key points in the proof are checked:
\begin{assumption}
	\quad\\
	
	(A10) $q(\mathbf{x}):\mathbb{R}^q\rightarrow\mathbb{R}^{\tilde{q}}$ has continuous partial derivatives in a neighborhood of $\theta_0$, and $\sum_{i=1}^q(\partial q/\partial x_i)|_{x = \theta_0} x_i$ does not vanish. \\
	
	(A11) $g = (g_1,\cdots,g_q)$ is a continuous function and $\forall i\leq q$, $g_i(X_t)$ is $L^p-m-$approximable in the sense of (C2) with $p>2$ and $g_i(X_t^{(m)})$ its $m-$approximation, and appropriate constant $A$ such that Lemma  \ref{th:41} holds for all $g_i(X_t)$.
	
\end{assumption}
\begin{theorem}\label{th:48}
	With (A1)- (A11) hold, for MF bootstrap and LMF bootstrap with empirical CDF $\bar F$ and kernel smoothed CDF $\widehat F_h$,
	\begin{equation}
				\sup_{x\in\mathbb{R}^{\tilde{q}}} |P^*(\sqrt n(q(\widehat{\theta}^*_n) - q(\theta_0^*))\leq x) - P(\sqrt n(q(\widehat{\theta}_n) - q(\theta_0))\leq x)| \overset{P}{\rightarrow} 0.
	\end{equation}
\end{theorem}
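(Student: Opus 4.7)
The plan is to reduce the statement to Theorem \ref{th:46} by (i) passing from the scalar mean of $Y_t$ to the vector mean of $g(X_t)$, and (ii) passing from the sample mean to a smooth function of it via the delta method justified by (A10). The key observation is that under (A11), each coordinate $g_i(X_t)$ is itself a strictly stationary sequence in $L^p$ satisfying the same $m$-approximation condition (C2) that was imposed on $\{Y_t\}$ in Section 4.1. Hence the machinery assembled in Lemmas \ref{th:41}, \ref{th:42}, \ref{th:43}, \ref{lm:45} and Theorems \ref{th:44}, \ref{th:46} can be applied coordinatewise to the sequence $\{g(X_t)\}$ without re-proving any of them.

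The first step is to establish the multivariate central limit theorem
\[
\sqrt{n}\bigl(\widehat{\theta}_n - \theta_0\bigr) \overset{d}{\to} \mathcal{N}(0,\Sigma_g),
\qquad
\Sigma_g = \sum_{k\in\mathbb{Z}} \mathrm{Cov}(g(X_0),g(X_k)),
\]
using the Cram\'er--Wold device: for every fixed $a\in\mathbb{R}^q$, the process $a^\top g(X_t)$ is univariate, stationary, in $L^p$, and inherits (C2) from (A11) since finite linear combinations preserve $m$-approximation with the same rate (triangle inequality). Lemma \ref{th:41} then applies to $a^\top g(X_t)$ and yields asymptotic normality of the linear combination with the correct long-run variance $a^\top\Sigma_g a$, and summability of $\mathrm{Cov}(g(X_0),g(X_k))$ follows from Theorem \ref{th:36} applied componentwise.

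The second step is the parallel CLT in the bootstrap world. By Lemma \ref{lm:45}(2), for both MF and LMF, the joint finite-dimensional distributions of $\{Y_t^*\}$ converge in probability to those of $\{Y_t\}$; by continuity of $g$ (A11) and the continuous mapping theorem, the same holds for $\{g(X_t^*)\}$. To obtain a bootstrap CLT for $\sqrt{n}(\widehat{\theta}_n^* - \theta_0^*)$, I would again apply Cram\'er--Wold: for each $a$, the bootstrap variance $\mathrm{Var}^*(n^{-1/2}\sum_t a^\top g(X_t^*))$ can be expanded in terms of $\wwidehat{\Sigma}_n$ and the estimated transforms in the same way as $(\sigma_\infty^*)^2$ in Theorem \ref{th:46}; the arguments used there (based on Theorem \ref{th:44} and (A9)) transfer verbatim if one replaces the indicator/identity with the function $g$, so that $\mathrm{Var}^*(n^{-1/2}\sum_t a^\top g(X_t^*))\overset{P}{\to} a^\top\Sigma_g a$. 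Asymptotic normality in the bootstrap world then follows from the same Brownian strong-approximation argument applied to $a^\top g(X_t^*)$, using that the $m$-approximation rates estimated in the bootstrap world are stable under the consistency of $\widehat F$ and $\wwidehat{\Sigma}_n$.

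The third step is the delta method. Combining the two CLTs above with matching variances,
\[
\sup_{x\in\mathbb{R}^q}\bigl|P^*(\sqrt{n}(\widehat{\theta}_n^*-\theta_0^*)\le x)-P(\sqrt{n}(\widehat{\theta}_n-\theta_0)\le x)\bigr|\overset{P}{\to}0,
\]
where coordinatewise order is understood. Under (A10), $q$ is continuously differentiable in a neighbourhood of $\theta_0$ with nonvanishing gradient, so a standard Taylor expansion gives
\[
\sqrt{n}(q(\widehat{\theta}_n)-q(\theta_0)) = \nabla q(\theta_0)^\top\sqrt{n}(\widehat{\theta}_n-\theta_0) + o_P(1),
\]
with the analogous expansion in the bootstrap world around $\theta_0^*$, using that $\theta_0^*\to\theta_0$ in probability (a consequence of consistency of the bootstrap mean). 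The remainder terms are $o_P(1)$ and $o_{P^*}(1)$ in probability respectively, so uniform convergence of the bootstrap distribution of the linearized quantity transfers to $\sqrt{n}(q(\widehat{\theta}_n^*)-q(\theta_0^*))$ via continuity of the distribution function of $\nabla q(\theta_0)^\top \mathcal{N}(0,\Sigma_g)$ (which is a nondegenerate Gaussian by the nonvanishing-gradient condition in (A10)) and an application of P\'olya's theorem.

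The main obstacle, in my estimation, is the second step: namely, verifying that the bootstrap-world $m$-approximation and long-run variance computations carry through for $g(X_t^*)$ rather than just $Y_t^*$. Because $g$ is only required to be continuous (not Lipschitz), one must ensure that the inequalities used to control $\|g(X_t^*) - g(X_t^{*,(m)})\|_p$ in the bootstrap world are uniform in the data; the cleanest route is to use (A11) directly to provide the $m$-approximation for $g$ in the real world and then transport the associated rate to the bootstrap world through the convergence in probability of the estimated transforms to the true ones, as was done for $Y_t$ itself in the proof of Theorem \ref{th:46}. Once this is in place, all remaining steps are routine extensions of the proofs already given.
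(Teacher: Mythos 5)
Your proposal is correct and follows essentially the same route as the paper, which omits the detailed proof precisely because it amounts to checking the key points of Theorem \ref{th:46} for $g(X_t)$ coordinatewise, combined with the Cram\'er--Wold device and the $\delta$-method under (A10)--(A11) — exactly the reduction you carry out. Your added details (linear combinations preserving (C2) via the triangle inequality, finite dependence and uniform integrability of $g(X_t^*)$ in the bootstrap world, and the P\'olya-theorem step for the linearized statistic) are the intended fill-ins and raise no gap.
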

The proof is similar to Theorem \ref{th:46} by checking   the key
 points listed there for $g(X_t)$ in combination with Cram\'er-Wold device and the $\delta$-method, therefore is omitted.
The mean is now a special case of Theorem \ref{th:48}. Moreover, it can be applied to more general statistics such as  autocovariance and/or autocorrelation.  We will address the autocovariance case in the following.

\subsubsection{Autocovariances} For simplicity, let $X_t = (Y_t,Y_{t+1},\cdots,Y_{t+k})\in \mathbb{R}^{k+1}$, $g(X_t) = ( Y_tY_{t+1}, \cdots,Y_tY_{t+k})$ and $q$ is the identity map. Then $\frac{1}{n-k}\sum_{t=1}^{n-k} g(X_t)$ is a version of an autocovariance estimator that estimates up to lag $k$ autocovariances, assuming $EY_0 = 0$ . Note that we can also design $g$ and $q$ more carefully such that we get the usual autocovariance estimators. Then the following corollary holds:
 \begin{corollary}
Assume $Y_t\in L^p$ $p>4$ such that (C2) is satisfied with appropriate rate $A$, then $g(X_t)$ as defined above also satisfies (C2) with $p' = \frac{p}{2}>2$. Then by Theorem \ref{th:48} the model-free bootstrap procedure is asymptotically correct for bootstrapping vector of autocovariances.

 \end{corollary}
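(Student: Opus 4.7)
The plan is to establish two things: first, that the vector-valued process $g(X_t) = (Y_t Y_{t+1}, \ldots, Y_t Y_{t+k})$ inherits an $L^{p/2}$ $m$-approximation from the $L^p$ $m$-approximation of $\{Y_t\}$; second, to feed this into Theorem \ref{th:48}. The bulk of the work is the first step, since once (A11) is checked, $q$ is the identity and so (A10) is immediate (each component $q_i(x) = x_i$ has Jacobian equal to a standard basis vector, which is nowhere zero), and the remaining assumptions (A1)--(A9) are inherited from the hypothesis.

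For the $m$-approximation, I would take the coupling-based approximation $\{Y_t^{(m)}\}$ of Definition 3.1, which by assumption satisfies $\norm{Y_t - Y_t^{(m)}}_p \leq \delta(m) \ll m^{-A}$, and build the surrogate for $g$ coordinatewise by setting $g_i(X_t^{(m)}) := Y_t^{(m)} Y_{t+i}^{(m)}$. Since each $Y_s^{(m)}$ is $m$-dependent, the block $X_t^{(m)}$ is $(m+k)$-dependent and hence so is $g_i(X_t^{(m)})$. This $(m+k)$-dependence is the same as $m'$-dependence after the harmless relabeling $m' = m+k$, which leaves the rate $A$ unchanged in the asymptotic sense $\delta(m-k) \ll (m-k)^{-A} \sim m^{-A}$.

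Next, I would apply the elementary identity
\begin{equation*}
Y_t Y_{t+i} - Y_t^{(m)} Y_{t+i}^{(m)} = (Y_t - Y_t^{(m)}) Y_{t+i} + Y_t^{(m)} (Y_{t+i} - Y_{t+i}^{(m)})
\end{equation*}
and bound each summand in $L^{p/2}$ using Cauchy--Schwarz (i.e., H\"older with conjugate exponents $2, 2$ on the $p/2$ norm, since $\tfrac{1}{p} + \tfrac{1}{p} = \tfrac{2}{p}$). This yields
\begin{equation*}
\norm{Y_t Y_{t+i} - Y_t^{(m)} Y_{t+i}^{(m)}}_{p/2} \leq \norm{Y_t - Y_t^{(m)}}_p \norm{Y_{t+i}}_p + \norm{Y_t^{(m)}}_p \norm{Y_{t+i} - Y_{t+i}^{(m)}}_p.
\end{equation*}
By stationarity $\norm{Y_{t+i}}_p = \norm{Y_0}_p < \infty$, and $\norm{Y_t^{(m)}}_p \leq \norm{Y_0}_p + \delta(m)$ is uniformly bounded in $m$. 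Hence $\norm{g_i(X_t) - g_i(X_t^{(m)})}_{p/2} \leq C\, \delta(m) \ll m^{-A}$, which gives (C2) at the exponent $p' = p/2 > 2$ with the same rate $A$. After this, Theorem \ref{th:48} applies directly, and the conclusion follows.

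The main obstacle I anticipate is the bookkeeping for the rate $A$: Lemma \ref{th:41} requires $A > \frac{p-2}{2\eta}(1 - \frac{1+\eta}{p}) \vee 1$ at a given exponent, and here the exponent drops from $p$ to $p' = p/2$, so invoking Lemma \ref{th:41} for each $g_i$ demands a strictly stronger numerical threshold on the base rate than the one controlling $\{Y_t\}$ itself. The phrase ``appropriate rate $A$'' in the corollary must therefore be read as the stronger condition obtained by substituting $p'$ for $p$ in the threshold of Lemma \ref{th:41}. Aside from tracking this constant, the argument is a routine product-rule decomposition, and extending to the centered estimator $\widehat{\gamma}(k) = \frac{1}{n}\sum (Y_t - \bar Y_n)(Y_{t+k} - \bar Y_n)$ is accomplished by absorbing the mean-correction into an appropriately smooth $q$, which is again handled by the $\delta$-method built into Theorem \ref{th:48}.
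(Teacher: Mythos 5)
Your proposal is correct and follows essentially the same route as the paper: the same add-and-subtract decomposition of $Y_tY_{t+i} - Y_t^{(m)}Y_{t+i}^{(m)}$, the same Cauchy--Schwarz bound $\norm{XY}_{p/2}\leq\norm{X}_p\norm{Y}_p$ giving (C2) at exponent $p'=p/2$ with the same rate, and then an appeal to Theorem \ref{th:48}. Your extra bookkeeping (the $(m+k)$-dependence relabeling, checking (A10) for the identity map, and noting that ``appropriate rate $A$'' must meet the Lemma \ref{th:41} threshold at $p'=p/2$) is a harmless elaboration; the paper streamlines the norm bound slightly by using that the coupled $Y_t^{(m)}$ is equal in distribution to $Y_t$, so $\norm{Y_t^{(m)}}_p=\norm{Y_t}_p$ exactly.
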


\begin{proof}
	Let $Y_t^{(m)}$ be the approximation series generated by coupling. Then 
	\begin{equation}
	\begin{split}
		\norm{Y_tY_{t+k} - Y_t^{(m)}Y_{t+k}^{(m)}}_{p'}
		 & = 
			\norm{Y_tY_{t+k} - Y_t^{(m)}Y_{t+k} + Y_t^{(m)}Y_{t+k} -Y_t^{(m)}Y_{t+k}^{(m)}}_{p'}\\
		&\leq
			\norm{Y_{t+k}}_{p}\norm{Y_t-Y_t^{(m)}}_{p} + 
			\norm{Y_{t}^{(m)}}_{p}\norm{Y_{t+k}-Y_{t+k}^{(m)}}_{p} \\
		&=
		2\norm{Y_t}_{p}\norm{Y_t-Y_t^{(m)}}_{p}\\
		&= o(m^{-A})
	\end{split}
	\end{equation}
	To complete proof, we apply Theorem \ref{th:48}.
\end{proof}
\begin{remark}\rm 
	(a). The usual assumption of finite sum of $4^{th}$ order cumulants of $Y_t$ that ensures finiteness of asymptotic variance  can be dropped since it can be deduced by  checking assumption (A11) along with Lemma \ref{th:41}.
	
	(b). Notably, \cite{kreiss2011} showed that the autoregressive(AR) sieve bootstrap procedure does not work in general for bootstrapping autocovariances of strictly stationary series (even for linear
series that are 
not Gaussian) because of inconsistency of the limiting variance associated the companion AR process. In essence, AR-sieve bootstrap (and the previously mentioned
Linear Process bootstrap) mimic correctly the first and second order moment structure of 
$Y_t$; if the statistic of interest has a large-sample distribution that depends on 
higher order moments, the AR-sieve bootstrap (and the LPB) 
 may fail. However, bootstrap validity holds for the MFB  due to the
 assumption that $Y_t = f(W_t)$ where $W_t$ is a Gaussian process whose  covariance structure can be consistently estimated (up to affine transform). Since a Gaussian process is fully described by its second order moment property, consistency of higher moments can also be obtained by the bootstrap procedure. Therefore the model-free bootstrap also works for higher order statistics.
	
\end{remark}

\subsection{Approximately linear statistics and sample quantiles}
 
Now consider a statistic $\widehat{ \eta}_n$ that can be expressed as
\begin{equation}\label{eq:4111}
\widehat{ \eta}_n = \frac{1}{n } \sum_{t=1}^{n } g(Y_t) +o_p(1/\sqrt{n}) .
\end{equation}
Under   aforementioned conditions,  the model-free bootstrap can be applied
to the linear statistic  $\widehat{ \theta}_n = \frac{1}{n } \sum_{t=1}^{n } g(Y_t)$.
Since $\widehat{ \theta}_n$   will be $\sqrt{n}$-consistent (under the required
conditions), it follows that $\widehat{ \eta}_n$ and $\widehat{ \theta}_n$
are asymptotically equivalent, i.e., 
$\sqrt{n} (\widehat{ \eta}_n-\theta_0) $ has the same asymptotic 
distribution with $\sqrt{n} (\widehat{ \theta}_n-\theta_0) $  where
$\theta_0=E(g(Y_t))$. Therefore, since  the model-free bootstrap works to 
approximate the distribution of $\sqrt{n} (\widehat{ \theta}_n-\theta_0) $, 
it  will also work  to 
approximate the distribution of $\sqrt{n} (\widehat{ \eta}_n-\theta_0) $.

Focusing on sample quantiles, we have the following result:
\begin{lemma}\label{lm:48}
Assume (C2) for $\{Y_t\}$ with appropriate rate $A$ as well as (A2).  Let $u_{n,p} = \bar{F}^{-1}(p)$; $u_p = F_Y^{-1}(p)$. Then for the Bahadur-Kiefer process mentioned in \cite{wu2005a}:
\begin{equation}
\alpha_n = \left\lvert f_Y(u_p)(u_{n,p} - u_p) - (p - \bar{F}(u_p))\right\rvert .
\end{equation}
We have $\forall p\in (0,1)$, $\alpha_n = O_p(r_n)$ where $r_n = o(n^{-1/2})$. As a result,
\begin{equation}\label{eq:4222}
	u_{n,p} - u_p = \frac{p - \bar{F}(u_p)}{f_Y(u_p)} + o_p(n^{-1/2}).
\end{equation}
Under additional assumptions of (A5)-(A8), then the above equation also holds for $u_{n,p} = \widehat{F}_h^{-1}(p)$, i.e.
\begin{equation}\label{eq:4223}
u_{n,p} - u_p = \frac{p - \widehat{F}_h(u_p)}{f_Y(u_p)} + o_p(n^{-1/2}).
\end{equation}
\end{lemma}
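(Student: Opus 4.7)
My plan is to establish the Bahadur representation (4.22) by combining the strong empirical-process approximation of Theorem \ref{th:35} with a modulus-of-continuity bound on the limiting Gaussian process, and then derive (4.23) by re-running the same argument with the smoothed functional CLT supplied by Assumption (A7). The preliminary ingredient is a $\sqrt n$-rate for $u_{n,p}-u_p$: applying Lemma \ref{th:41} to the bounded, $m$-approximable sequence $\{I\{Y_t\le u_p\}-p\}$ (which inherits (C2) from $\{Y_t\}$, being a $\theta$-Lipschitz-like functional of the indicator) gives $\bar F(u_p)-p=O_p(n^{-1/2})$, and since $f_Y(u_p)>0$ and $F_Y$ is strictly increasing by (A2), a standard inversion argument upgrades this to $u_{n,p}-u_p=O_p(n^{-1/2})$.

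With that in hand, write $n(\bar F(s)-F_Y(s))=R(s,n)$. The defining property of the empirical quantile gives $\bar F(u_{n,p})=p+O(n^{-1})$, so
\[
F_Y(u_{n,p})-p \;=\; -\tfrac{1}{n}R(u_{n,p},n)+O(n^{-1}),\qquad p-\bar F(u_p)\;=\;-\tfrac{1}{n}R(u_p,n).
\]
A mean value expansion gives $F_Y(u_{n,p})-F_Y(u_p)=f_Y(u_p)(u_{n,p}-u_p)+o(u_{n,p}-u_p)$ by continuity of $f_Y$, and combining the three displays reduces $\alpha_n$ to
\[
\alpha_n \;=\; \tfrac{1}{n}\bigl|R(u_{n,p},n)-R(u_p,n)\bigr|+O(n^{-1})+o(u_{n,p}-u_p).
\]
By Theorem \ref{th:35}, $R(\cdot,n)=K(\cdot,n)+o(\sqrt n(\log n)^{-\alpha})$ uniformly in $s$, so it suffices to control the Gaussian increment $|K(u_{n,p},n)-K(u_p,n)|$. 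Since the covariance $\Gamma$ in (\ref{eq:36}) is Lipschitz in each argument (because $F_Y$ is $\theta$-Lipschitz by (A2) and each summand in $\Gamma$ is a difference of probabilities), the Gaussian process has increment variance $O(n|u_{n,p}-u_p|)$; a standard modulus-of-continuity/chaining bound then yields $|K(u_{n,p},n)-K(u_p,n)|=O_p\bigl(\sqrt{n|u_{n,p}-u_p|\log n}\bigr)=O_p(n^{1/4}\sqrt{\log n})$. Dividing by $n$ gives $\alpha_n=O_p(n^{-3/4}\sqrt{\log n})=o_p(n^{-1/2})$, which is (4.22).

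For the kernel case, the argument is parallel. Because $\widehat F_h$ is continuous, $\widehat F_h(u_{n,p})=p$ exactly, so there is no $O(n^{-1})$ jump term. The bias $E\widehat F_h(y)-F_Y(y)$ is $O(h^2)$ under (A5)--(A6) via the usual Taylor expansion inside the convolution, which is $o(n^{-1/2})$ by (A8). The centered smoothed empirical process is approximated by the tight Gaussian process $G_h$ of (A7), whose covariance $\Gamma_{G_h}$ is again Lipschitz in its arguments (the kernel $K$ is bounded and $F_Y$ is Lipschitz), so the same modulus-of-continuity argument bounds $|G_h(u_{n,p})-G_h(u_p)|=O_p(n^{1/4}\sqrt{\log n})$. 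Re-running Steps above with $\widehat F_h$ in place of $\bar F$ and $G_h$ in place of $K$ yields (4.23).

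The principal technical obstacle is the modulus-of-continuity estimate for $K(\cdot,n)$ (and for $G_h$), since Theorem \ref{th:35} and Assumption (A7) only assert \emph{existence} of the approximating Gaussian process and identify the covariance, not an off-the-shelf oscillation bound on intervals of length $O_p(n^{-1/2})$. I would either supply this directly via Dudley's entropy integral applied to the (Lipschitz) pseudo-metric induced by $\Gamma$, or alternatively invoke the Bahadur--Kiefer bound of \cite{wu2005a} under physical dependence, which applies here because (C2) implies (C0) by Lemma \ref{lm:33} and which is precisely designed to short-circuit this Gaussian calculation.
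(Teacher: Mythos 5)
Your proposal is sound in outline, but it is worth knowing that the paper does not actually prove Lemma 4.8 at all: no argument appears in the Appendix, and the Remark immediately following the lemma simply defers to the Bahadur--Kiefer result of Wu (2005a), whose geometric-moment-contraction setting is covered here because (C2) with a geometric (or suitably fast) rate implies (C0) via Lemma \ref{lm:33}. So your ``fallback'' option is in fact the paper's entire route, while your main argument --- reduce $\alpha_n$ to the increment $\tfrac{1}{n}\lvert R(u_{n,p},n)-R(u_p,n)\rvert$ plus an $O(n^{-1})$ jump term, pass to the approximating Gaussian process of Theorem \ref{th:35}, and kill the increment by an oscillation bound over an interval of length $O_p(n^{-1/2})$ --- is a genuinely more self-contained proof than anything in the paper, and it is the classical way such representations are obtained. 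Three caveats. First, your claim that $\Gamma$ is Lipschitz is stronger than what the paper itself can justify: the computation inside the proof of Lemma \ref{th:42} only yields a H\"older-type bound $\sigma_{s,t}^2\leq C\lvert t-s\rvert^{\epsilon_0}$ with $\epsilon_0\in(0,1)$ (the Gaussian tail estimate gives $o(\lvert t-s\rvert^{1-\epsilon})$ for every $\epsilon>0$, not exactly $\lvert t-s\rvert$); this is harmless, since chaining then gives an increment of order $n^{-1/2-\epsilon_0/4}$ up to logarithms, still $o_p(n^{-1/2})$, but you should state the bound in that form. Second, the preliminary rate: indicators are not Lipschitz functionals, so $\{I\{Y_t\leq u_p\}\}$ does not inherit (C2) from part 4 of Lemma \ref{lm:33}; you either need the standard truncation argument combining (C1) with the Lipschitz continuity of $F_Y$ (bounding $\lVert I\{Y_t\leq u\}-I\{Y_t^{(m)}\leq u\}\rVert_p$ by $\delta_m+C\gamma_m^{\theta}$), or, more simply, deduce $u_{n,p}-u_p=O_p(n^{-1/2})$ directly from the uniform bound $\sup_s\lvert\bar F(s)-F_Y(s)\rvert=O_p(n^{-1/2})$ already established in Lemma \ref{th:42} together with $f_Y(u_p)>0$. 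Third, if you do fall back on Wu (2005a), note that his theorem requires a geometric decay rate, which is strictly stronger than the polynomial rate in (C2); the paper's Remark acknowledges exactly this, so the citation is legitimate only under that strengthened ``appropriate rate $A$.''
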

\begin{remark}\rm
As shown in \cite{wu2005a},  for the empirical CDF estimator $\bar F$, a stronger version of equation (\ref{eq:4222}) holds provided a faster geometric convergence rate for the $m-$ approximation assumption (C2). 
\end{remark}
In equation (\ref{eq:4222}) and  (\ref{eq:4223}), $\frac{p - \widehat{F}(u_p)}{f_Y(u_p)}$ is of the linear form in equation (\ref{eq:4111}). For equation (\ref{eq:4222}),
$$\frac{p - \bar{F}(u_p)}{f_Y(u_p)} =\frac{1}{n} \sum_{k=1}^n \frac{F_Y(u_p) - I\{Y_k\leq u_p\}}{f_Y(u_p)};$$
the same holds for equation (\ref{eq:4223}) by the relation
$$\frac{p - \widehat{F}_h(u_p)}{f_Y(u_p)} =\frac{1}{n} \sum_{k=1}^n \frac{F_Y(u_p) - K(\frac{u_p - Y_k}{h})}{f_Y(u_p)}.$$
by previous analysis, we have the following theorem.
\begin{theorem}
	Under assumptions of Theorem \ref{th:48} with respect to the linear part $\frac{p - \widehat{F}(u_p)}{f_Y(u_p)}$ and with Lemma \ref{lm:48} holding,  both MF bootstrap and LMF bootstrap are asymptotically valid for sample quantiles.
\end{theorem}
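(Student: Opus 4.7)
The strategy is to reduce the sample quantile to an approximately linear statistic of the form \eqref{eq:4111}, and then apply the bootstrap validity result for smooth functions of linear statistics (Theorem \ref{th:48}) to the linear part, showing that the $o_p(n^{-1/2})$ remainder is asymptotically negligible in both the real and the bootstrap world.

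In the real world, Lemma \ref{lm:48} gives
\begin{equation*}
\sqrt{n}(u_{n,p} - u_p) \;=\; \sqrt{n}\,\frac{p - \widehat{F}(u_p)}{f_Y(u_p)} + o_p(1)
\;=\; \frac{1}{\sqrt{n}}\sum_{t=1}^{n} g(Y_t) + o_p(1),
\end{equation*}
where $g(y) = \bigl(F_Y(u_p) - I\{y\le u_p\}\bigr)/f_Y(u_p)$ for the empirical CDF case, and $g(y) = \bigl(F_Y(u_p) - K((u_p-y)/h)\bigr)/f_Y(u_p)$ for the kernel case. The first step is to verify that this $g$ satisfies the $L^p$ $m$-approximation condition (C2) with the rate required by Theorem \ref{th:48}: for the indicator version this follows from (C1) with $\gamma_m=m^{-C/\theta}$, $\delta_m=m^{-C}$ (Lemma \ref{th:42}), and for the kernel version from the Lipschitzness of $K$ together with item 4 of Lemma \ref{lm:33}. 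Hence Theorem \ref{th:48} applies to $g$ and yields the CLT $\sqrt{n}(\bar g_n - \theta_0)\Rightarrow \mathcal N(0,\sigma_g^2)$ with the matching bootstrap statement in probability.

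The bootstrap analogue is handled in the same way. Let $u_{n,p}^{*}$ denote the sample $p$-quantile computed from $\{Y_t^{*}\}_{t=1}^n$, and let $u_p^{*} = \widehat F^{-1}(p)$ be the population quantile in the bootstrap world. Applying Lemma \ref{lm:48} to the bootstrap series gives
\begin{equation*}
\sqrt{n}\bigl(u_{n,p}^{*} - u_p^{*}\bigr) \;=\; \frac{1}{\sqrt{n}}\sum_{t=1}^{n} g^{*}(Y_t^{*}) + o_{p^{*}}(1),
\end{equation*}
where $g^{*}$ is the bootstrap analogue (with $F_Y,f_Y,u_p$ replaced by $\widehat F, \widehat f, u_p^{*}$). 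Since $\widehat F \to F_Y$ uniformly and $\widehat f(u_p^{*})\to f_Y(u_p)$ in probability under (A2) and (A5)--(A8), and $u_p^{*}\to u_p$ in probability, $g^{*}$ differs from $g$ by an $o_p(1)$ perturbation in each coordinate; coupled with boundedness (indicator case) or Lipschitzness of $g^{*}$, this replacement only adds an $o_{p^{*}}(1)$ term once divided by $\sqrt n$. Then Theorem \ref{th:48} applied to $g$ delivers
\begin{equation*}
\sup_{x\in\mathbb R}\Bigl|P^{*}\Bigl(\tfrac{1}{\sqrt n}\sum_{t=1}^{n} g(Y_t^{*})\le x\Bigr) - P\Bigl(\tfrac{1}{\sqrt n}\sum_{t=1}^{n}g(Y_t)\le x\Bigr)\Bigr|\overset{P}{\to}0,
\end{equation*}
and the asymptotic negligibility of the two remainders upgrades this to the analogous Kolmogorov-distance statement for $\sqrt n(u_{n,p}^{*}-u_p^{*})$ versus $\sqrt n(u_{n,p}-u_p)$.

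The main obstacle is establishing the Bahadur--Kiefer remainder $o_{p^{*}}(n^{-1/2})$ in the bootstrap world. This requires verifying that $\{Y_t^{*}\}$ satisfies the $m$-approximation hypothesis (C2) at the rate demanded by Lemma \ref{lm:48}, together with an analogue of (A2) for $\widehat F$. The first is a consequence of the construction $Y_t^{*}=\widehat F^{-1}(\Phi(Z_t^{*}))$ with $Z_t^{*}=\wwidehat\Sigma_n^{1/2}\xi_n^{*}$: coupling the i.i.d.\ $\xi_t^{*}$ beyond lag $m$ yields an $m$-dependent $Y_t^{*,(m)}$, and the $L^p$ gap is controlled via the operator-norm decay of $\wwidehat\Sigma_n^{1/2}$ off the diagonal (from Theorem \ref{th:37} plus (A9)) together with the Lipschitz behaviour of $\widehat F^{-1}\circ\Phi$ inherited from (A2). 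The second follows because $\widehat F$ is itself $\theta$-Lipschitz with high probability under (A2) (both for $\bar F$ with the obvious modification at $1$, and for $\widehat F_h$ via (A6)). Once these ingredients are in place, the proof reduces to quoting Theorem \ref{th:48} and the argument is complete.
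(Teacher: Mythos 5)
Your first two paragraphs are essentially the paper's own argument: Lemma \ref{lm:48} writes $u_{n,p}-u_p$ as the linear statistic $\frac{1}{n}\sum_{k=1}^n \frac{F_Y(u_p)-I\{Y_k\le u_p\}}{f_Y(u_p)}$ (or its kernel analogue) plus an $o_p(n^{-1/2})$ remainder, and then Theorem \ref{th:48} is invoked for that linear part, the hypotheses on the summands being exactly the ``assumptions of Theorem \ref{th:48} with respect to the linear part'' in the statement. The paper stops there: validity for the quantile is deduced from the asymptotic equivalence of $\sqrt{n}(u_{n,p}-u_p)$ with the linearized statistic in the real world, combined with bootstrap validity for that linearized statistic; no bootstrap-world Bahadur representation is attempted.

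Where you go further --- establishing a Bahadur--Kiefer expansion for the bootstrap sample quantile $u_{n,p}^*$ around $u_p^*=\widehat F^{-1}(p)$ --- the sketch has genuine problems. Applying Lemma \ref{lm:48} to $\{Y_t^*\}$ requires an analogue of (A2) for the bootstrap marginal law, i.e.\ a strictly increasing, differentiable CDF with positive density; in the MF/LMF scheme with $\widehat F=\bar F$ the bootstrap observations take values in the finite set $\{Y_1,\dots,Y_n\}$, so the bootstrap population distribution is discrete, there is no density $\widehat f(u_p^*)$, and your claim that ``$\widehat F$ is itself $\theta$-Lipschitz with high probability'' is false for $\bar F$ (it jumps by $1/n$ at each data point, and $\bar F^{-1}$ is a step function). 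Likewise, the (C2) rate for $\{Y_t^*\}$ uniformly in $n$ does not follow from ``operator-norm decay of $\wwidehat\Sigma_n^{1/2}$ off the diagonal''; what the paper actually uses elsewhere (proof of Theorem \ref{th:46}) is that $Y_t^*$ is $\lfloor c_\kappa l\rfloor$-dependent by the bandedness of $\wwidehat\Sigma_n$, and it never pushes this into a quantile-type expansion. So either drop the bootstrap-side Bahadur step and argue, as the paper does, through the linearized statistic and asymptotic equivalence, or, if you insist on bootstrapping the sample quantile itself, you need a different argument (e.g.\ direct uniform convergence of $\bar F^{*}$ to $F_Y$ in the spirit of the proof of Theorem \ref{th:51}), since the route you sketch breaks down exactly at the smoothness of $\widehat F$.
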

 A special case of interest   is to let  $p = \frac{1}{2}$.  Then, the above discussion shows  validity of the model-free bootstrap for the sample median.

\subsection{Bootstrap validity for kernel smoothed spectral density}
Another important parameter of interest is the spectral density evaluated at some frequency $\omega$.  For this subsection, let $f_{sp.d}(\omega)$ denote the spectral density of $\{Y_t\}$ which is defined as $f_{sp.d}(\omega) = \frac{1}{2\pi}\sum_{k\in\mathbb{Z}} \gamma_Y(k)e^{-ik\omega}$.
The kernel smoothed estimator is a widely used estimator of $f_{sp.d}(\omega)$
 in this setting. Let $I_n(\omega_j) = \frac{1}{2\pi n} \left\lvert\sum Y_k e^{-ik\omega_j}\right\rvert^2$ denote the periodogram of $\{Y_t\}$ where $\omega_j \in \{\frac{2\pi j }{n}, j\in [n]\}$ are the Fourier frequencies. The kernel smoothed spectral density estimator is defined by:
\begin{equation}
		\widehat f_{sp.d}(\omega) = \sum_{j\in [n]} \tilde\kappa_h(\omega - \omega_j) I_n(\omega_j).
\end{equation}
where $\tilde\kappa(\omega)$ is a kernel function under certain assumptions and $\tilde\kappa_h(\cdot) = h^{-1}\tilde\kappa(\cdot/h)$.
Let $\widehat f_{sp.d}^*(\omega)$ be the kernel smoothed estimator based on the bootstrap samples $\{Y_t^*\}_{t=1}^n$ generated through model-free bootstrap. We would like to show validity for this procedure. Besides key assumptions for previous theorems to hold, we also need the following:
\begin{assumption}

		(A12) $\{Y_t\}$ satisfies (C2) with $p=4$; $\inf_{\omega\in [-\pi,\pi]}f_{sp.d}(\omega) >0$.\\
		
		(A13) The kernel $\tilde\kappa(\cdot)$ is a symmetric and bounded square integrable function. Also, $\int \tilde\kappa(u) du = 1$ and $\int u^2\tilde\kappa(u) du<\infty$.\\
		
		(A14) $h\rightarrow 0$ and $nh\rightarrow\infty$.

\end{assumption}
Then the following holds.
\begin{theorem}\label{th:49}
		Under (A1)-(A9) and (A12)-(A14), for any fixed $ \omega \in [-\pi,\pi]$
		\begin{equation}\label{eq:4226}
				\sqrt{nh}\left(\widehat f_{sp.d}(\omega) - E\widehat{f}_{sp.d}(\omega)\right)\overset{d}{\rightarrow} \mathcal{N}(0, \sigma_\omega^2). 
		\end{equation}
		In the above,  $\sigma_\omega^2 = f_{sp.d}^2(\omega)\int \tilde\kappa^2(u) du$ for $\omega/\pi \notin \mathbb{Z}$ and  $\sigma_\omega^2 = 2f_{sp.d}^2(\omega)\int \tilde\kappa^2(u) du$ for $\omega/\pi \in \mathbb{Z}$.
		Also, for both model-free  and limit model-free bootstrap
		\begin{equation} \label{2ndeq.ofTh:49}
			\sup_{x\in\mathbb{R}} \left\lvert P^* (\sqrt{nh}\left(\widehat f_{sp.d}^*(\omega) - E^*\widehat{f}_{sp.d}^*(\omega)\right)\leq x) - P( \sqrt{nh}\left(\widehat f_{sp.d}(\omega) - E\widehat{f}_{sp.d}(\omega)\right) \leq x)  \right\rvert 
			\overset{P}{\rightarrow} 0.
		\end{equation}
\end{theorem}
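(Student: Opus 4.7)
The plan is to split the theorem into its two assertions and attack each by leveraging the machinery built in the previous sections.

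First, I would establish the CLT (\ref{eq:4226}). The strategy is to rewrite $\widehat f_{sp.d}(\omega)$ as a weighted sum of sample autocovariances via the identity $I_n(\omega_j)=\frac{1}{2\pi}\sum_{|k|<n}\widehat\gamma(k)e^{-ik\omega_j}$, obtaining
\begin{equation*}
\widehat f_{sp.d}(\omega)=\frac{1}{2\pi}\sum_{|k|<n}\lambda_n(k,\omega)\,\widehat\gamma(k),\quad \lambda_n(k,\omega)=\sum_j \tilde\kappa_h(\omega-\omega_j)\,e^{-ik(\omega_j-\omega)},
\end{equation*}
so that, under (A13)--(A14), the weights concentrate (as $h\to 0$, $nh\to\infty$) around the Fourier transform of $\tilde\kappa$. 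Under (C2) with $p=4$, Theorem \ref{th:36} gives absolute summability of $\gamma_Y(k)$, and the autocovariance-product calculation used in Corollary 4.2 shows that $\{Y_tY_{t+k}\}$ is itself $m$-approximable in $L^2$. A standard big-block/small-block scheme applied to the $m$-approximation $\{Y_t^{(m)}\}$ then yields asymptotic normality: first let $n\to\infty$ for fixed $m$, then send $m\to\infty$ using (C2) to control the remainder. The variance computation is the classical one and produces $\sigma_\omega^2=f_{sp.d}^2(\omega)\int\tilde\kappa^2(u)du$, with the doubling at $\omega/\pi\in\mathbb Z$ arising from the symmetry of the spectrum.

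Second, to transfer the CLT to the bootstrap world, I would mimic the two-step strategy of Theorem \ref{th:46}. By Lemma \ref{lm:45}, the finite-dimensional distributions of $\{Y_t^*\}$ converge in probability to those of $\{Y_t\}$ for both MF and LMF schemes, and by Theorem \ref{th:44} together with (A9) the bootstrap autocovariances $\widehat\gamma^*(k)$ match $\widehat\gamma(k)$ in asymptotic distribution and variance uniformly over $|k|\le c_\kappa l$. Substituted into the representation above, this gives
\begin{equation*}
\widehat f_{sp.d}^*(\omega)-E^*\widehat f_{sp.d}^*(\omega)=\frac{1}{2\pi}\sum_{|k|<n}\lambda_n(k,\omega)\bigl(\widehat\gamma^*(k)-E^*\widehat\gamma^*(k)\bigr),
\end{equation*}
and a conditional big-block/small-block argument applied to $\{Y_t^*\}=\widehat F^{-1}(\Phi(Z_t^*))$ -- in which $Z_t^*$ is (asymptotically) Gaussian with covariance $\wwidehat\Sigma_n$ consistently estimating $\Sigma_n$ -- delivers conditional asymptotic normality with some variance $(\sigma_\omega^*)^2$. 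It then remains to verify $(\sigma_\omega^*)^2\overset{P}{\rightarrow}\sigma_\omega^2$; since $\sigma_\omega^2$ is a continuous functional of $f_{sp.d}$, this reduces to $\widehat f_{sp.d}^*(\omega)\overset{P}{\rightarrow} f_{sp.d}(\omega)$, which follows from the consistency of the bootstrap autocovariances together with absolute summability.

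The main obstacle will be controlling the fourth-order contribution to the bootstrap variance. The asymptotic variance of a kernel-smoothed spectral estimator normally involves the trispectrum, and the bootstrap must reproduce these fourth-order cumulants. The crucial structural point -- the same one highlighted in Remark 4.6(b) that distinguishes MFB from AR-sieve or LPB -- is that since $Y_t^*=\widehat F^{-1}(\Phi(Z_t^*))$ with $Z_t^*$ Gaussian and $\wwidehat\Sigma_n\to\Sigma_n$ in operator norm, the entire joint law of $\{Y_t^*\}$, including all higher cumulants, is asymptotically determined by the consistently-estimated Gaussian covariance of $\{Z_t\}$. Hence the trispectrum matches asymptotically and $(\sigma_\omega^*)^2\overset{P}{\rightarrow}\sigma_\omega^2$. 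The bandwidth conditions in (A14) combined with (A12) ensure that the bias and variance of the smoothed estimator sit on compatible scales so that the above limit theorems apply uniformly in the bootstrap world, yielding (\ref{2ndeq.ofTh:49}).
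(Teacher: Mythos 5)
Your proposal is essentially correct in structure and reaches the same two conclusions, but it travels a more self-contained road than the paper. The paper's proof is short: it rewrites the kernel-smoothed estimator as a lag-window estimator with $O(1/n)$ error (negligible even after $\sqrt{nh}$-scaling), notes that under (A12) and Lemma \ref{lm:33} both $\{Y_t\}$ and $\{Y_t^*\}$ satisfy the physical-dependence condition (C0) with $p=4$, and then simply invokes Theorem 2 of \cite{liu_wu_2010} to obtain the CLT \eqref{eq:4226} simultaneously in the real and bootstrap worlds; the variance matching is done exactly as in your last step, via $|f_{sp.d}(\omega)-f^*_{sp.d}(\omega)|\leq \sum_{k\in\mathbb{Z}}|\gamma_Y(k)-\gamma_Y^*(k)|\overset{P}{\rightarrow}0$ and boundedness. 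You instead propose to prove the CLT from scratch by a big-block/small-block argument on the $m$-dependent approximations, and to obtain the conditional bootstrap CLT by a blocking argument exploiting the conditional $\lfloor c_\kappa l\rfloor$-dependence of $\{Y_t^*\}$. This is a legitimate alternative (and the bootstrap-side blocking is arguably more transparent than re-verifying (C0) for the pseudo-series), but it costs considerably more work to execute rigorously for a quadratic-form statistic, which is precisely what the citation buys the paper. Two small corrections: the quantity that must converge to $f_{sp.d}(\omega)$ for variance matching is the spectral density $f^*_{sp.d}(\omega)$ of the bootstrap process (i.e.\ of $\gamma_Y^*$), not the estimator $\widehat f^*_{sp.d}(\omega)$; and your stated ``main obstacle'' is not one — for the kernel-smoothed estimator at a fixed frequency the fourth-cumulant (trispectrum) contribution to the variance is of order $1/n$ versus the $1/(nh)$ main term, so it vanishes after $nh$-scaling and the asymptotic variance depends only on $f_{sp.d}^2(\omega)$. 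The trispectrum issue is what breaks AR-sieve validity for autocovariances (spectral means), not for pointwise spectral density estimation; your Gaussian-subordination resolution is fine but unnecessary here.
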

\begin{proof}
	See Appendix.
\end{proof}

As is well-known, the bandwidth $h$ governs the trade-off between bias and variance
of $\widehat{f}_{sp.d}(\omega)$. If we choose $h$ in a way that {\it undersmoothing} occurs, 
i.e., when the bias of $\widehat{f}_{sp.d}(\omega)$ is of smaller order of magnitude than its
standard deviation, then equation (\ref{2ndeq.ofTh:49}) holds true with 
$E\widehat{f}_{sp.d}(\omega)$ replaced by ${f}_{sp.d}(\omega)$, i.e., we have
	\begin{equation} \label{3rdeq.ofTh:49}
			\sup_{x\in\mathbb{R}} \left\lvert P^* (\sqrt{nh}\left(\widehat f_{sp.d}^*(\omega) - E^*\widehat{f}_{sp.d}^*(\omega)\right)\leq x) - P( \sqrt{nh}\left(\widehat f_{sp.d}(\omega) -   {f}_{sp.d}(\omega)\right) \leq x)  \right\rvert 
			\overset{P}{\rightarrow} 0.
		\end{equation}
Equation (\ref{3rdeq.ofTh:49}) can then be used to construct model-free
confidence intervals for ${f}_{sp.d}(\omega)$ based on the quantiles of the
bootstrap distribution of $\sqrt{nh}\left(\widehat f_{sp.d}^*(\omega) - E^*\widehat{f}_{sp.d}^*(\omega)\right)$.

\section{Model-free bootstrap   for prediction}
For simplicity, we focus on proving model-free bootstrap validity for one-step ahead prediction; generalizing to $h$- step ahead case is also possible. First of all, we list the following definitions relevant to predictive setting:
 \begin{definition}(Predictive distribution)
	Let $Y_{n+1}$ be the 1- step ahead value of the time series dataset $\{Y_t\}_{t=1}^n$. The conditional distribution $D_{n+1}$ of $Y_{n+1}|
	\underline Y_n$ is called the predictive distribution.
 \end{definition}
\begin{definition}(Predictive root)
	Let $\widehat Y_{n+1}$ be the one step ahead predictor of $Y_{n+1}$
based on the original series $\{Y_t\}_{t=1}^n$, i.e.,  $\widehat Y_{n+1}$
 depends entirely on the data $\{Y_t\}_{t=1}^n$. The predictive root is 
	defined as 
	\begin{equation}
			Y_{n+1} - \widehat{Y}_{n+1}.
	\end{equation}

	We wish to approximate the distribution of the predictive root by the bootstrap procedure.
	Let $L_{\alpha/2}$ and $R_{\alpha/2}$ be the left lower $\alpha/2$ quantile and right upper $\alpha/2$ quantile of the (conditional on $\underline Y_n$)
 distribution of the predictive root. Then, an  exact two-sided $1-\alpha$ prediction interval  for $ Y_{n+1}$ is:
	\begin{equation}
			(\widehat Y_{n+1} + L_{\alpha/2}, \widehat Y_{n+1} + R_{\alpha/2});
	\end{equation}
	in other words, 
	\begin{equation}
			P(\widehat{Y}_{n+1} + L_{\alpha/2} \leq Y_{n+1} \leq  \widehat{Y}_{n+1} + R_{\alpha/2} |\underline Y_n) = 1-\alpha .
	\end{equation}
\end{definition}

\begin{definition}(Bootstrap validity for prediction interval)
	Let $Y_{n+1}^*$ be the one-step ahead value generated through bootstrap procedure. Also 
	let $\widehat{Y}_{n+1}^*$ be the one step ahead predictor of $Y_{n+1}$ conditioning on $\underline Y_n$ with its formula estimated by the bootstrap samples $\{Y_t^*\}_{t=1}^n$. We say that boostrap validity for prediction intervals holds if for any $ \alpha \in (0,1)$:
	\begin{equation} \label{eqofDef5.3}
			L^*_{\alpha/2}\rightarrow L_{\alpha/2}
	\ \ \mbox{and} \ \ 
					R^*_{\alpha/2}\rightarrow R_{\alpha/2}
	\end{equation}
	in probability.
	 Here $L^*_{\alpha/2}$ and $R^*_{\alpha/2}$ denotes the relative $\alpha/2$ quantiles with respect to the conditional 
distribution of the bootstrap predictive root $Y_{n+1}^* - \widehat{Y}_{n+1}^*$.   
\end{definition}
 In other words, the two-sided approximate $1-\alpha$ bootstrap
 prediction interval  for $ Y_{n+1}$ is:
	\begin{equation}
			(\widehat Y_{n+1} + L^*_{\alpha/2}, \widehat Y_{n+1} + R^*_{\alpha/2}),
	\end{equation}
	  while equation (\ref{eqofDef5.3}) would imply
	\begin{equation}
			P(\widehat{Y}_{n+1} + L^*_{\alpha/2} \leq Y_{n+1} \leq  \widehat{Y}_{n+1} + R^*_{\alpha/2} |\underline Y_n) \to 1-\alpha .
	\end{equation}

Given previous definitions, we state the bootstrap validity theorem for prediction intervals.

\begin{theorem}\label{th:51}(Model-free bootstrap validity for 1-step ahead prediction)
	 Assume that the conditional distribution of the predictive root $Y_{n+1} - \widehat{Y}_{n+1}$ is continuous. With assumptions (A1) - (A9),
	  \textbf{Algorithms}  2 and 3  are  asymptotically valid in the sense of Definition 5.3.\\
\end{theorem}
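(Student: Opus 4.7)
The plan is to reduce bootstrap validity for the quantiles of the predictive root to two convergence statements: (i) conditional weak convergence of $Y_{n+1}^*\mid \underline Y_n$ to $Y_{n+1}\mid \underline Y_n$ in probability, and (ii) convergence of the bootstrap predictor $\widehat Y_{n+1}^*$ to $\widehat Y_{n+1}$ in probability. Together with the assumed continuity of the limiting predictive-root distribution, the standard fact that weak convergence to a continuous distribution implies pointwise convergence of quantiles then delivers $L^*_{\alpha/2}\to L_{\alpha/2}$ and $R^*_{\alpha/2}\to R_{\alpha/2}$ in probability, which is exactly the content of Definition 5.3.

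For step (i), I would work through the latent Gaussian layer. In the true model $Z_{n+1}\mid \underline Y_n\sim \mathcal{N}(\mu_n,v_n)$ with $\mu_n=\Sigma_{21}\Sigma_{11}^{-1}\underline Z_n$ and $v_n=\Sigma_{22}-\Sigma_{21}\Sigma_{11}^{-1}\Sigma_{12}$; in the bootstrap world, $Z_{n+1}^*\mid \underline Y_n\sim \mathcal{N}(\hat\mu_n,\hat v_n)$ with the obvious data-driven counterparts. The core reduction is to show
\begin{equation*}
\hat\mu_n-\mu_n = o_p(1) \quad\text{and}\quad \hat v_n-v_n=o_p(1).
\end{equation*}
I would decompose $\hat\mu_n-\mu_n$ into three error contributions coming from $\wwidehat\Sigma_{21}-\Sigma_{21}$, $\wwidehat\Sigma_{11}^{-1}-\Sigma_{11}^{-1}$, and $\tildehat{\underline Z}_n-\underline Z_n$, controlling the first two via Theorem 3.3 and (A9) and the third via the uniform $O_p(n^{-1/2})$ rate for $\widehat U_t-U_t$ from Lemmas 4.2--4.3 combined with the boundedness of $\tilde\Phi^{-1}$ on its range. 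Composing with uniform consistency of $\widehat F^{-1}\circ\Phi$ to $F_Y^{-1}\circ\Phi$ on compacts (from (A2) and Lemma 4.2 or 4.3) and invoking the continuous mapping theorem yields the desired conditional convergence of $Y_{n+1}^*$ to $Y_{n+1}$ in probability.

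For step (ii), in the $L^2$-optimal case $\widehat Y_{n+1}^* = \int (\widehat F^*)^{-1}(\Phi(z))\,d(\widehat F_Z^{(n+1)})^*(z)$ and analogously for $\widehat Y_{n+1}$. Since the integrand converges uniformly on compacts and the mixing measure converges weakly by step (i), a dominated-convergence argument using Gaussian tails of the mixing distribution together with a truncation of $F_Y^{-1}\circ\Phi$ yields $\widehat Y_{n+1}^*-\widehat Y_{n+1}=o_p(1)$; parallel reasoning covers the conditional-median predictor. Slutsky's theorem then combines (i) and (ii) to give convergence in probability of the conditional bootstrap predictive-root distribution to that of $Y_{n+1}-\widehat Y_{n+1}$; continuity of the limit finishes the proof for Algorithm 3. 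Algorithm 2 reduces to the same conclusion via Lemma 4.5(2), which shows that the resampled $\xi^*_t$ are asymptotically i.i.d.\ $\mathcal{N}(0,1)$ in distribution-in-probability, making MF and LMF asymptotically equivalent for generating $Z_{n+1}^*$.

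The main obstacle I anticipate is sharp control of $\hat\mu_n-\mu_n$. The naive bound $\|\wwidehat\Sigma_{21}-\Sigma_{21}\|_2\cdot\|\Sigma_{11}^{-1}\underline Z_n\|_2$ is unusable because the second factor is typically $O_p(\sqrt n)$, while $\mu_n$ itself is only $O_p(1)$. The remedy is to exploit the predictor structure: writing $\mu_n$ as the inner product of the best linear-predictor coefficients $\phi=\Sigma_{11}^{-1}\Sigma_{12}$ (summable, since (A1) forces the spectral density bounded and bounded away from zero) against the observations, and comparing $\hat\phi^\top\tildehat{\underline Z}_n$ to $\phi^\top\underline Z_n$ via $\ell_1$--$\ell_\infty$ duality on the latent-series error combined with operator-norm control on the coefficient error. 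Analogous care is needed for $\hat v_n$, though there the quadratic form is more benign since both $\Sigma_{22}$ and the correction term are $O(1)$.
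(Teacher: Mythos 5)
Your overall route is the paper's own: establish conditional (on $\underline Y_n$) convergence in probability of the bootstrap predictive root $Y_{n+1}^*-\widehat Y_{n+1}^*$ to $Y_{n+1}-\widehat Y_{n+1}$, then pass to the quantiles via the assumed continuity of the limit. Your step (i) essentially reproduces the paper's argument for Algorithm 3, including the point you flag as the main obstacle: the paper also avoids the naive Cauchy--Schwarz bound, splitting $\wwidehat\Sigma_{21}\wwidehat\Sigma_{11}^{-1}\tildehat{\underline Z}_n-\Sigma_{21}\Sigma_{11}^{-1}\underline Z_n$ into a term bounded by $\bigl(\sum_i|\wwidehat c_{n,i}|\bigr)\sup_i|\tildehat Z_i-Z_i|$ and a term $(c_n-\wwidehat c_n)^T\underline Z_n$ handled as a Gaussian quadratic form with variance at most $\lambda_{\max}(\Sigma_n)\norm{c_n-\wwidehat c_n}_2^2$; your $\ell_1$--$\ell_\infty$/summable-coefficient remedy is the same idea, though note that operator-norm control of the coefficient error alone does not close that second term --- you still need the Gaussianity of $\underline Z_n$ (or coefficient summability) to pair with it. The reduction of Algorithm 2 to Algorithm 3 via Lemma \ref{lm:45}(2) and the continuous mapping theorem also matches the paper.

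The genuine gap is in your step (ii). By construction (step 4 of Algorithms 2 and 3), $\widehat Y_{n+1}^*$ is built from transforms \emph{re-estimated on the pseudo-data} $\{Y_t^*\}_{t=1}^n$: the integrand is $(\widehat F^*)^{-1}\circ\Phi$ and the mixing measure $(\widehat F_Z^{(n+1)})^*$ is driven by $\wwidehat\Sigma_n^*$. Neither your claim that the integrand ``converges uniformly on compacts'' nor that ``the mixing measure converges weakly by step (i)'' is covered by step (i), which concerns only $\hat\mu_n,\hat v_n$ computed from the original data --- these starred objects are different random quantities, and their consistency is precisely where the bootstrap must mimic estimation variability. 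One has to rerun the consistency theory inside the bootstrap world, which is where the paper spends most of its effort on this step: $\{Y_t^*\}$ is $\lfloor c_\kappa l\rfloor$-dependent under $P^*$ with uniformly integrable moments, so Theorem \ref{th:37} applies and gives $\norm{\wwidehat\Sigma_n^*-\Sigma_n^*}_{op}\overset{P^*}{\rightarrow}0$; this is combined with $\norm{\Sigma_n^*-\Sigma_n}_{op}\rightarrow 0$ (from Lemma \ref{lm:45} plus uniform integrability of $(Y_t^*)^2$) to get $\norm{\wwidehat\Sigma_n^*-\Sigma_n}_{op}\overset{P^*}{\rightarrow}0$; the quantile factor $(\widehat F^*)^{-1}$ requires $\sup_y|\bar F_Y^*(y)-F_Y(y)|\rightarrow 0$ in probability (a Glivenko--Cantelli argument for the $m$-dependent bootstrap series together with Lemma \ref{lm:45}); and for Algorithm 2 the paper additionally imposes $(\log n)^2\norm{\wwidehat\Sigma_n^*-\Sigma_n}_{op}\overset{P^*}{\rightarrow}0$, a bootstrap-world analogue of (A9), so that the re-estimated conditional law of $Z_{n+1}^*$ converges. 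Without this block, the assertion $\widehat Y_{n+1}^*-\widehat Y_{n+1}=o_p(1)$ is unsupported; once it is supplied, your truncation/dominated-convergence argument (the paper uses uniform integrability from the finite $p$-th moment instead) and Slutsky's theorem finish the proof exactly as in the paper.
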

\begin{proof}
See Appendix.
\end{proof}

Unlike the bootstrap for parameter inference, the conditional distribution of the predictive root is not
asymptotically degenerate. Hence, no central limit
theorems are required for parameter estimates since their associated variabilty 
vanishes asymptotically. 

Asymptotic validity of bootstrap prediction interval is of less importance than that of 
confidence intervals.   A good finite-sample prediction interval should incorporate the 
variance of all estimated features, else it will result into {\it  undercoverage}. 
However, as stated above, the property of asymptotic validity has nothing to do with
capturing finite-sample   variability in estimation; see also  Ch. 2.4.1 of Politis (2015). 
 In this respect, the performance of prediction intervals must be quantified in finite-sample
numerical experiments as in the following section.

\section{Numerical results}
We look into  three  data generating models and evaluate the coverage performance of our proposed MFB methods with respect to both confidence intervals and prediction intervals. All three  models satisfy equation \eqref{eq:31} with
\begin{equation}
	f(x) = 
	\begin{cases}
		-\sqrt{-x} & x < 0\\
		\frac{(x+1)^2}{10} & x\geq 0
	\end{cases}
\end{equation}
and $W_t$ generated by an autoregressive   moving average (ARMA) model, namely
$$ W_t-\phi_1 W_{t-1} -\cdots -\phi_p W_{t-p}=
\epsilon_t  + \theta_1 \epsilon_{t-1} +\cdots +\theta_q \epsilon_{t-q}$$
with $\epsilon_t\overset{i.i.d.}{\sim}\mathcal{N}(0,1)$.
The three ARMA models considered are as follows:
in one of the following ways:   
\begin{enumerate}
\item  MA  model of order one, with $\theta_1 = -0.5$ and all other parameters zero.
\item AR model of order one, with $\phi_1 =  0.5$ and all other parameters zero.

\item MA  model of order 30, with $\theta_1 = 2$, $\theta_2 = 1$,   $\theta_k = \frac{10}{k^2}
$ when $ 3\leq k \leq 30$, and all other parameters zero.

\end{enumerate} 

We analyze the performance of model-free and limit model-free bootstrap with both empirical CDF estimator and kernel CDF estimator.

\subsection{Performance for bootstrap confidence intervals}
For confidence intervals, the parameter of interests are: the mean for all three models; lag 1 autocovariance for model 1 and 2; lag 2 autocovariance for model 3. As a comparison, we benchmark our methods with the two most popular alternative methods, i.e. block bootstrap and AR-sieve bootstrap.  

The mean is, of course, a linear statistic. The lag-$k$ autocovariance  can be written 
in the form (\ref{eq:42}) with  
$$ g(Y_t,\cdots,Y_{t+k})=\begin{bmatrix}
		Y_t Y_{t+k}  \\  Y_t
		\end{bmatrix}$$
and $$q(\frac{1}{n-k}\sum_{t=1}^{n-k} g(Y_t,\cdots,Y_{t+k}))
= \frac{1}{n-k}\sum_{t=1}^{n-k} Y_t Y_{t+k}  - [ \frac{1}{n-k}\sum_{t=1}^{n-k} Y_t]^2 .$$
Although we can do block bootstrap on the $Y_t$ and recompute $\widehat\gamma_k$ on the bootstrap data to construct confidence intervals, this procedure will suffer from end effects resulting into an estimator that is biased towards $0$. As a remedy,
let $X_t=(Y_t,\cdots,Y_{t+k})$ as we did in Section \ref{section:vsfls},
then a block bootstrap on the $X_t$ data will relieve this problem and it is equivalent to the so-called blocks-of-blocks bootstrap
on the $Y_t$ data; see \cite{politis1992}, as well as 
Paradigm 12.8.11 of the book by \cite{McElroyPolitis2019}. 
For our purposes, we used first level of blocking with $k=5$ that can be used to capture
autocovariances up to lag 4.

For the block bootstrap on the $X_t$ data, we can choose the   block size $b$  as $b = const*n^{1/3}$ where {\it const} is selected according to \cite{blockchoice}.
 As for the AR-sieve bootstrap, the order $p$ of the fitted AR model is selected through minimizing the AIC; see \cite{buhlmann1997}.

The metric we use for comparison is the empirical coverage rate for the bootstrap intervals.
To elaborate, let $N = 1000$ be the number of replicated experiments, $n \in \{100,200,500,1000\}$ be the length of samples, $\alpha = 0.05$ and $B = 250$ the number of bootstrap replications. So 
for    experiment $i\in \{1,\ldots, N\}$, we generate a time series sample  of length $n$, and use different methods to construct a $1-\alpha$ bootstrap confidence interval $(L_i,R_i)$ based on the $B$   bootstrap replicates. Let $\theta_0$ denote the true parameter of interest; then the empirical coverage rate for the whole experiment is:
\begin{equation}
CVR_{\theta_0} = \frac{1}{1000} \sum_{i=1}^{1000} \mathit{I}_{\theta_0\in(L_i,R_i)}
\end{equation}  
The purpose of looking at CVR is two fold: first, comparing CVR values in a finite-sample size setting will tell which method has better performance. Second, as the sample size $n$ gets large, asymptotic validity can be observed by checking whether the empirical CVR converges to the  nominal $100(1-\alpha)\%$ percent, which will provide numerical grounds to our previous conclusions.

The following tables contains CVR results for different settings all 
with nominal $\alpha = 0.05$ (95\% confidence interval).
The acronyms in the ``Method" column of Table 1 have the following meaning. "MF" for model-free bootstrap; "LMF" for limit model-free bootstrap; "emp" for empirical CDF estimator used in the bootstrap procedure; "ker" for kernel CDF estimator; "BB" for   block bootstrap;
 "AR-sieve" for the autoregressive sieve bootstrap.

For the mean parameter (see Table 1), all the methods being compared are theoretically valid for all  three models. We can observe  that for each method, as $n$ increases the coverage rate approaches 95\% albeit the speed of convergence can be different. The AR sieve holds an obvious advantage against other methods for model 1 with coverage close to 95\% even at $n = 100$, whereas the proposed model-free bootstrap works on par with the block bootstrap. However for the other two models, MF and LMF bootstrap with kernel CDF estimator has a noticeable advantage over AR-sieve and block bootstrap, especially for model 3 which has a more complex data generating process.

 Interestingly,  the behavior of model-free bootstrap using the empirical CDF  is almost the same with block bootstrap. 
However the advantage goes away for large $n$ which is fortunate since  calculating the
quantile inverse of
a kernel CDF (which is needed in the bootstrap algorithm)is  computationally expensive for large sample size. 

\begin{centering}
	\begin{table}[]
		\centering
		\begin{tabular}{p{2cm}|p{1cm}|p{1cm}|p{1cm}|p{1cm}|p{1cm}}
			
			\toprule
			\hline
			Method & n=100 & 200 & 500 & 1000 &2000 \\
			\hline
			\multicolumn{6}{c}{model 1}\\
			\hline
			MF-ker &92.1 & 92.5 &93.2 &93.6 & 94.8\\
			\hline
			LMF-ker &92.3& 92.5 & 93.0 & 94.7 &93.9  \\
			\hline
			MF-emp & 91.6 & 92.4 & 93.1 &94.0  &94.4  \\
			\hline
			LMF-emp &92.0  &92.9  &93.5  &93.8  &94.0  \\
			\hline
			BB  &90.4 &93.1 &94.1 &95.0 &95.0 \\
			\hline
			AR-sieve &94.0 &93.6 &94.0 &95.2 &95.6 \\
			\hline
			\multicolumn{6}{c}{model 2}\\
			\hline
			MF-ker&91.4& 92.4& 93.2 & 93.2 & 93.7\\
			\hline
			LMF-ker &88.5 &92.1 &93.3 &93.3 & 94.4\\
			\hline
			MF-emp & 88.7& 90.9& 92.8& 94.1& 92.9\\
			\hline
			LMF-emp &85.3 & 88.4& 92.4& 93.0& 92.9 \\
			\hline
			BB  &85.2 &89.6 &91.3 &92.3 &93.0 \\
			\hline
			AR-sieve &90.6 &92.9 &92.8 & 93.6 &94.2 \\
			\hline
			\multicolumn{6}{c}{model 3}\\
			\hline
			MF-ker &87.9 &89.3 &90.3 &91.7 &93.3 \\
			\hline
			LMF-ker & 86.9& 88.1& 90.5& 92.1&93.1 \\
			\hline
			MF-emp &82.7 &85.5 & 89.6& 91.4& 92.7\\
			\hline
			LMF-emp &80.9 &85.8 &89.5 &91.4 &92.8 \\
			\hline
			BB  &82.2 &85.4 &87.3 &89.8 &91.4 \\
			\hline
			AR-sieve &83.6 &88.1 & 90.0& 91.9& 92.2\\
			\hline
			\bottomrule
			
		\end{tabular}
		\caption{Empirical CVR for   the mean parameter across 3 models}
		\label{tab:my_label}
	\end{table}
\end{centering}

Tables 2 and 3 present the empirical CVR for the autocovariance  
with different models. Both lag 1 and lag 2 autocovariances 
were considered; for conciseness, we present lag 1  results from models 1 and 2
(see Table 2), and  lag 2  results from model 3 (see Table 3).
Recall that   
the AR-sieve bootstrap    is not asymptotically valid in general
for the    autocovariance. Comparing Tables 2 and 3, we see that 
the AR-sieve bootstrap works well for the autocovariance in models 1 and 2
but not with data from model 3; see
 Table 3 where the  AR-sieve CVR appears to converge  to around 86\% instead of  the
95\% nominal level.  

We can also observe asymptotic validity of model-free bootstrap methods 
manifesting from Table 2 and 3. 
Furthermore,  model-free methods 
appear to enjoy a faster convergence towards nominal compared to block bootstrap. 
Interestingly, limit model-free with kernel CDF estimator works significantly better than  other
methods  across all 3 models.

\begin{centering}
\begin{table}[]
	\centering
	\begin{tabular}{p{2cm}|p{1cm}|p{1cm}|p{1cm}|p{1cm}|p{1cm}}
		\toprule
		\hline
		Method & n=100 & 200 & 500 & 1000 &2000 \\
		\hline
		\multicolumn{6}{c}{model 1}\\
		\hline
		MF-ker & 82.0& 88.2 & 90.5& 92.2& 91.1\\
		\hline
		LMF-ker & 89.6 &94.3 & 94.0  & 94.8 &94.1  \\
		\hline
		MF-emp &  80.0& 88.3 & 90.1 & 91.6 &  92.0\\
		\hline
		LMF-emp & 89.2 & 93.5 &94.0 & 95.0 & 93.3 \\
		\hline
		BB  &86.0 &88.8 & 91.5&91.1 & 93.0\\
		\hline
		AR-sieve &94.3 & 94.8& 94.6& 96.3& 95.0\\
		\hline
		\multicolumn{6}{c}{model 2}\\
		\hline
		MF-ker & 75.0&  86.5& 94.2& 94.6& 94.6\\
		\hline
		LMF-ker & 92.2 & 94.2 & 95.4  & 94.0 & 93.4 \\
		\hline
		MF-emp & 68.6 & 82.9 & 90.2 & 91.9 &94.2  \\
		\hline
		LMF-emp & 76.9 & 82.5 &  89.0& 91.6 & 94.4 \\
		\hline
		BB  & 82.3& 86.2& 89.0 & 90.5&91.5 \\
		\hline
		AR-sieve &88.3 &91.8 &95.1 &95.5 &95.5 \\
		\hline
		\bottomrule
		
	\end{tabular}
	\caption{Empirical CVR for lag 1 autocovariance for the first 2 models}
	\label{tab:my_label}
\end{table}
\end{centering}
 
 \begin{centering}

\begin{table}[h]
	\centering
	\begin{tabular}{p{2cm}|p{1cm}|p{1cm}|p{1cm}|p{1cm}|p{1cm}}
		\toprule
		Method & n=100 & 200 & 500 & 1000 &2000 \\
		\hline
		\hline
		MF-ker & 76.9& 87.8& 91.7& 93.0& 94.1\\
		\hline
		LMF-ker &77.7 &86.0 &91.3 &92.4 &93.8 \\
		\hline
		MF-emp &68.2 & 80.9& 86.8& 91.6& 93.5\\
		\hline
		LMF-emp &65.1 &77.5 &85.8 &89.5 &93.5 \\
		\hline
		BB  &64.0 &73.9 &83.2 & 87.3& 89.4\\
		\hline
		AR-sieve &69.4 &76.4 & 84.9& 86.1& 86.1\\
		\bottomrule
		
	\end{tabular}
	\caption{Empirical CVR for lag 2 autocovariance for model 3}
	\label{tab:my_label}
\end{table}
 \end{centering}

\subsection{Performance of bootstrap prediction intervals}
We now move on to the prediction performance for the three models. The empirical coverage rate is defined similarly as
\begin{equation}
	CVR_{Y_{n+1}} = \sum_{i=1}^{1000} I_{Y^{(i)}_{n+1}\in(\widehat Y_n^{(i)} + L_i, \widehat Y_n^{(i)}  + R_i)}
\end{equation}
where $(L_i, R_i)$ are sample quantiles of bootstrap predictive root generated by bootstrap and $Y_{n+1}^{(i)}$ is the $n+1$ value of the time series sample from the $i^{th}$ experiment.

Since block bootstrap is not a viable method for generating 1-step ahead prediction value, we benchmark predictive performance
of the model-free methods comparing them   with the AR-sieve bootstrap.
The bootstrap samples are generated through a forward bootstrap manner as described in Algorithm 3.1 of \cite{PanPolitis2016}; see also  \cite{ALONSO20021}.
Table 4 provides the empirical CVR with $n \in \{100,200,300,500\}$; the samples sizes
are smaller  compared to our previous simulations because of the increasing computational cost for prediction. However, it is reassuring that all methods considered,
i.e., the 4 model-free variations as well as the AR-sieve bootstrap,
produce prediction intervals with CVR close to the nominal 95\% even with
$n$ as low as 200. It is difficult to do a finer  comparison of these 5 bootstrap methods 
since, for computational reasons, we had to choose a small number of bootstrap replications
($B=250$). Ongoing work includes devising an analog of the `Warp-Speed' method of
\cite{giacomini_politis_white_2013} that will speed up Monte Carlo experiments involving bootstrap 
in the case of prediction intervals.

\begin{centering}
\begin{table}[]
	\centering
	\begin{tabular}{p{2cm}|p{1cm}|p{1cm}|p{1cm}|p{1cm}}
		\toprule
		\hline
		Method & n=100 & 200 & 300 & 500  \\
		\hline
				\multicolumn{5}{c}{model 1}\\
		\hline
		MF-ker &   95.0 &  94.6 & 91.6  &91.8 \\
		\hline
		LMF-ker & 94.1   & 96.6  &  93.9 & 93.0\\
		\hline
		MF-emp &94.0  & 94.2  & 92.2  &94.2    \\
		\hline
		LMF-emp &92.2  &  93.9 &  94.0 & 93.8   \\
		\hline
		AR-sieve &  94.4  & 94.6  & 94.0 & 93.2 \\
		\hline
		\multicolumn{5}{c}{model 2}\\
		\hline
		MF-ker & 93.6   & 94.0  &  92.2 &93.2 \\
		\hline
		LMF-ker & 90.2   & 95.3  & 96.4  & 92.8\\
		\hline
		MF-emp &  94.1 & 93.4  &  95.0 & 94.2   \\
		\hline
		LMF-emp &  89.2  &90.7   & 95.0  & 93.6    \\
		\hline
		AR-sieve &  94.0  & 93.2  & 94.0 & 94.4  \\
		\hline
		\multicolumn{5}{c}{model 3}\\
		\hline
		MF-ker &93.8& 94.4&95.4&94.8 \\
		\hline
		LMF-ker &91.8 &96.7 &95.6&  93.6\\
		\hline
		MF-emp & 92.2&96.2& 96.4& 95.2\\
		\hline
		LMF-emp &92.8 &94.8 &96.4&93.2 \\
		\hline
		AR-sieve & 91.4& 94.8&93.2& 93.0\\
		\hline
		\bottomrule
\end{tabular}
\caption{Empirical CVR for prediction intervals across the three models}

\end{table}

\end{centering}

\vskip .1in
{\bf Acknowledgments.} Many thanks are due to 
Jens-Peter Kreiss, Stathis Paparoditis, and the participants of the August 2015 Workshop
on {\it Recent Developments in Statistics for Complex Dependent Data}, Loccum (Germany),
where the seeds of this work were first presented. This research was partially supported
by NSF grants DMS 16-13026 and  DMS 19-14556.


\bibliography{bbtex}

\section{Appendix}

\begin{proof}[Proof of Lemma \ref{lm:33}]
	1. Let $\gamma_m = m^{-C/\theta}$ and $\delta_m = m^{-C}$. By Markov's inequality, 
	\begin{equation}
	\begin{split}
	P(|Y_t - Y_t^{(m)}|>m^{-C/\theta}) &= P(|Y_t - Y_t^{(m)}|^p>m^{-Cp/\theta}) \\&\leq \frac{E(|Y_t - Y_t^{(m)}|^p)}{m^{-Cp/\theta}} \\& \ll \frac{m^{-pA}}{m^{-Cp/\theta}} = m^{-C} \\
	\end{split}
	\end{equation}
	
	2. $$\sum_{m=0}^\infty\norm{Y_t - Y_t^{(m)}}_p\leq\sum_{m=0}^\infty\delta(m).$$ Since $\delta(m)\ll m^{-A} $ with $A>1$, $\delta(m)$ is summabe. Thus (C3) holds.\\
	
	3. Let $\epsilon_0'$ be the new independent sample to be used in $\delta_p(t)$. Let $\epsilon_0'',\epsilon_{-1}'',\cdots$ be an infinite sequence of i.i.d. samples from $F_\epsilon$. By triangle inequality we have:
	\begin{equation}
	\begin{split}
	\delta_p(m) & = \norm{h(\cdots,\epsilon_{-1},\epsilon_0,\cdots,\epsilon_{m}) - h(\cdots,\epsilon_{-1},\epsilon_0',\cdots,\epsilon_{m}) }_p\\
	& \leq   \norm{h(\cdots,\epsilon_{-1},\epsilon_0,\cdots,\epsilon_{m}) - h(\cdots,\epsilon_{-1}'',\epsilon_0'',\epsilon_1,\cdots,\epsilon_{m}) }_p\\ &+ \norm{h(\cdots,\epsilon_{-1}'',\epsilon_0'',\epsilon_1,\cdots,\epsilon_{m}) - h(\cdots,\epsilon_{-1},\epsilon_0',\cdots,\epsilon_{m})}_p\\
	& = 2 \norm{Y_m - Y_m^{(m)}}_p
	\end{split}
	\end{equation}
	Thus (C3) implies (C0).	\\
	
	4. See page 2443 in \cite{berkes2011}.
\end{proof}

\quad 

\begin{proof}[Proof of Lemma \ref{th:42}]
		Equation (\ref{eq:49}) is deduced by result in Lemma \ref{lm:33}. Then
		by Theorem \ref{th:35} 
		\begin{equation}
		\begin{split}
		\sup_{t\in[n]}|\widehat{U}_t - U_t| &\leq \frac{1}{n}\sup_{s\in\mathbb{R}}|\sum_{k=1}^n(I(Y_k\leq s) - F_Y(s))|\\
		& = \frac{1}{n} \sup_{s}|R(s,n)|\\
		& \leq \frac{1}{\sqrt{n}}\sup_s|K(s,1)| + o(n^{-1/2} (\log n)^{-\alpha}), a.s.
		\end{split}
		\end{equation}
		where $K(s,1)$ is a centered Gaussian process with covariances
		\begin{equation}\label{eq:412}
		\Gamma(s,s') = \sum_{k\in\mathbb{Z}}E(I(Y_0\leq s) - F_Y(s))(I(Y_k\leq s') - F_Y(s')) 
		\end{equation}
		absolutely convergent $\forall s,s'\in\mathbb{R}$. 
		We also need the limiting Gaussian process $K(s,1)$ to be tight such that $\sup_{s} |K(s,1)| = O_p(1)$, which usually is true but not mentioned in Theorem \ref{th:35}. Another way to show this is to prove certain continuity and boundedness condition for the covariance $\Gamma(s,s')$.

	Note that $K(s,1)$ can be reparametrized as $K'(u), u = F_Y(s)$ such that the centered gaussian process is now living in a bounded domain $T = [0,1]$(reparametrization of the empirical process). Also, since $F_Y$ is absolutely continuous and strictly increasing, $\sup_{s\in\mathbb{R}} |K(s,1) | = \sup_{t\in[0,1]} |K'(t)|$. Let $\Gamma'(t,t')$ denote the covariance kernel of $K'(u)$. Obviously $\Gamma'(t,t') = \Gamma(F_Y^{(-1)}(t),F_Y^{(-1)}(t'))$, for which we have $\Gamma'(0,0) = \Gamma'(1,1) = 0$. Thus the Gaussian process $K'(u)$ is pinned to $0$ a.s. at enpoints $0$ and $1$. \\
	We claim that 
	\begin{itemize}
		\item (1) $\Gamma'(t,t')$ is uniformly bounded for $t,t'\in [0,1]^2$.
		\item (2) $\Gamma'(t,t')$ is continuous in $t$ and $t'$.
		\item (3) $K'$ satisfies the Kolmogorov-Chentsov condition: $\exists \alpha,\beta,C>0$, such that
	$$	E|K'(t) - K'(s)|^\alpha\leq C|t-s|^{1+\beta}$$
	\end{itemize}
	(2) is a consequence of (1). To see this, note that 
	$\forall t$,$\forall t_1<t_2$, $|\Gamma'(t_1,t) - \Gamma'(t_2,t)| = |\sum_{k\in \mathbb{Z}}E(I(U_0\leq t_1) - t_1)(I(U_k\leq t) - t) - E(I(U_0\leq t_2) - t_2)(I(U_k \leq t) - t)| = E\left[I(t_1\leq U_0\leq t_2) - (t_2-t_1)\right]\left[\sum_{k\in \mathbb{Z}}(I(U_k \leq t) - t)\right]$. By previous theorem, the expectation exists $\forall t$ and $t_1<t_2$, and uniformly bounded by $2\sup_{t\in[0,1]} |\Gamma'(t,t)|.$ As $|t_2-t_1|\rightarrow 0$, the random variable inside the expectation converges to $0$ a.s. Thus by dominated convergence theorem we have $|\Gamma'(t_1,t) - \Gamma'(t_2,t)| \rightarrow 0 $.\\
	
	For (1), by positive definiteness of covariance kernel we have that
	\begin{equation}
	\begin{split}
	\sup_{(t,t')\in [0,1]^2}|\Gamma'(t,t')| &=  \sup_{t\in[0,1]}|\Gamma'(t,t)| = \sup_{t\in [0,1]}|\sum_{k\in\mathbb{Z}} E(I(U_0\leq t) - t)(I(U_k\leq t) - t)|\\
	&= \sup_{t\in[0,1]} |\sum_k E\left[I(U_0\leq t)I(U_k\leq t) -t^2\right]|\\
	& = \sup_{t\in[0,1]} |\sum_k \int_{(-\infty,g^{-1}(t)]^2} f_{W}^{(k)}(x,y) - f_W(x)f_W(y) dxdy|\\
	&\leq \int_{\mathbb{R}^2} \sum_k |f_{W}^{(k)}(x,y)-f_W(x)f_W(y)| dxdy \quad\quad\cdots (*)
	\end{split}
	\end{equation}
	Where $g = F_Y\circ f$ is a monotone (increasing) function; $f_{W}^{(k)}$ is the bivariate normal density of $(W_0,W_k)$; $f_W$ is the normal density of $W_t$. We show that $(*)$ is bounded.\\
		Rewrite $f_{W}^{(k)}(x,y)$ as $f_{xy}(a_k,b_k) =  \frac{1}{2\pi}a_k^{-1/2}\exp{\{-\frac{1}{2}a_k^{-1}b_k\}}$,with $a_k = \sigma^2_W(0) - \sigma_W^2(k)$ and $b_k = \sigma_W(0)(x^2 + y^2) - 2\sigma_W(k)xy$, where $\sigma_W(k)$ is the lag-$k$ autocovaraince for $W_t$.  \\
		\begin{equation}
		\nabla f  = 
		\begin{bmatrix}
		\frac{\partial f}{\partial a_k} \\
		\frac{\partial f}{\partial b_k}
		\end{bmatrix}
		 = \begin{bmatrix}
		\frac{1}{2} a_k^{-\frac{3}{2}}(a_k^{-1}b_k -1)e^{-\frac{1}{2}a_k^{-1}b_k}\\
		-\frac{1}{2}a_k^{-\frac{3}{2}}e^{-\frac{1}{2}a_k^{-1}b_k}
		\end{bmatrix}
		\end{equation}
		Then 
		\begin{equation}
		\begin{split}
			& \int_{\mathbb{R}^2} |f_{xy}(a_k,b_k) - f_{xy}(\sigma^2_W(0),\sigma_W(0)(x^2 + y^2))| dxdy \\
			& = 
			\int_{\mathbb{R}^2}|\nabla f^T(\sigma^2_W(0),\sigma_W(0)(x^2 + y^2))\begin{bmatrix}-\sigma^2_W(k)\\-2\sigma_W(k)xy \end{bmatrix}| dxdy + o(\sigma_W(k) + \sigma_W^2(k))
		\end{split}	
		\end{equation}
		and
		\begin{equation}
		\begin{split}
		(*) &= \int_{\mathbb{R}^2}\sum_k |f_{xy}(a_k,b_k) - f_{xy}(\sigma^2_W(0),\sigma_W(0)(x^2 + y^2))| dxdy\\
		& = O\left( \int_{\mathbb{R}^2}\sum_k|\nabla f^T(\sigma^2_W(0),\sigma_W(0)(x^2 + y^2))\begin{bmatrix}-\sigma^2_W(k)\\-2\sigma_W(k)xy \end{bmatrix}| dxdy\right)\\
		& = O\left(\sum_k \sigma_W(k) + \sum_k \sigma_W^2(k)\right) 
		\end{split}
		\end{equation}

		With $\{W_t\}$ satisfying (C2), $\sum_k \sigma_W(k)$ is absolutely convergent, then so is $\sum_k \sigma_W^2(k)$, the above quantity is bounded. \\
		For (3), Since $K'$ is a centered gaussian process, We have for $s<t$
		\begin{equation}
E|K'(t)-K'(s)|^\alpha = \sqrt{\frac{2}{\pi}}\int_{x\in(0,\infty)} x^\alpha \frac{1}{\sigma_{s,t}}e^{-\frac{x^2}{2\sigma_{s,t}^2}} dx = C_3 \sigma_{s,t}^{\alpha-1}	
		\end{equation}

		Where $\sigma_{s,t}^2 = \sum_{k\in\mathbb{Z}}E(I(U_0\in(s,t]) - (t-s))(I(U_k\in(s,t]) - (t-s))\quad\cdots(**)$. It suffice to show that $\exists \alpha>1,\beta, C>0$ s.t. $\sigma_{s,t}^2 \leq C(t-s)^{\frac{2(1+\beta)}{\alpha -1 }}.$ Note that
		\begin{equation}\label{eq:79}
			(**) \leq \int_{(x,y)\in A_{s,t}^2} \sum_k|f_W^{(k)}(x,y) - f_W(x)f_W(y)| dxdy
		\end{equation}
		
		Where $A_{s,t}\subset \mathbb{R}$ is such that $\int_{A_{s,t}} f_W(x) dx = t-s$. With same Taylor expansion arguments, there exists positive constants $C_i$ such that the right hand side of equation (\ref{eq:79}) is bounded by $ C_1E(W^2 I(W\in A_{s,t})) + C_2 E(|W|I(W\in A_{s,t})) + C_3E(I(W\in A_{s,t}))$, where
		$W$ is a normal random variable with density $f_W$. 
		Since in the calculation of the expectations the dominating term is the exponential decay, $\forall \epsilon >0$, 
		$E(W^2 I(W\in A_{s,t})) = o((t-s)^{1-\epsilon})$, and so is $E(|W|I(W\in A_{s,t}))$. Thus $\exists \epsilon_0 \in (0,1)$,
		$\left(**\right) \leq C(t-s)^{\epsilon_0}$. The Kolmogorov continuity condition is satisfied. By Borell's inequality(Chapter 2.1, \cite{adler}), the tail probability of the supremum is bounded by the tail probability of the Gaussian distribution with variance $\sup_{t\in [0,1]} \Gamma'(t,t) <\infty$. Thus we have proved the desired result.\\
\end{proof}

\quad

\begin{proof}[Proof of Lemma \ref{th:43}]
	By calculations in Theorem 1.2 in \cite{li_raccine},
	\begin{equation}
	E(\widehat{F}_h(x)) - F(x) = \frac{c_2}{2}h^2F^{(2)}(x) + o(h^2).
	\end{equation}
	where $c_2 = \int x^2 k(x) dx<\infty$. Thus with $F^{(2)}(x)$ bounded and $h = o(n^{-1/4})$,
	\begin{equation}\label{eq:444}
	\sup_{x\in \mathbb{R}} |E(\widehat{F}_h(x)) - F(x)| = o(n^{-1/2}).
	\end{equation}
	Also by (A7) we have 
	\begin{equation}
	\sqrt{n}(\widehat{F}_h(x) - E(\widehat{F}_h(x)))  = G_h(x) + o_p(1)
	\end{equation}
	and 
	$G_h(x)$ is a tight centered Gaussian process with covariance 
	$\Gamma_{G_h}(x,x') = \sum_{k\in \mathbb{Z}}E\left(K(\frac{x-Y_0}{h}) - E(K(\frac{x-Y_0}{h}))\right)\left(K(\frac{x - Y_k}{h}) - E(K(\frac{x-Y_k}{h}))\right)$$\rightarrow \Gamma(x,x')$ as $h(n)\rightarrow 0$, $\forall x,x'\in\mathbb{R}$, where $\Gamma$ is given in equation (\ref{eq:36}). As shown in Lemma \ref{th:42}, the limit Gaussian process is tight.
	This is sufficient for
	\begin{equation}
	\sup_{x\in\mathbb{R}} \left\lvert\widehat{F}_h(x) - E(\widehat{F}_h(x))\right\rvert = O_p(\frac{1}{\sqrt{n}}).
	\end{equation}
	Together with equation (\ref{eq:444}) we have $\sup_{x\in\mathbb{R}} |\widehat{F}_h(x) - F(x)| = O_p(\frac{1}{\sqrt{n}})$. Therefore $\sup_{t\in[n]}|\widehat{U}_t^{(h)} - U_t| = O_p(\frac{1}{\sqrt{n}})$.
	
\end{proof}

\quad

\begin{proof}[Proof of Theorem \ref{th:44}]
	Note that 
	\begin{equation}
	\sum_{k=0}^{\lfloor c_{\kappa}l\rfloor}|\wwidehat{\sigma}(k) - \sigma(k)|\leq 
	\underbrace{\sum_{k=0}^{\lfloor c_{\kappa}l\rfloor}|\wwidehat{\sigma}(k) - \widehat\sigma(k)|}_{(1)} + \underbrace{\sum_{k=0}^{\lfloor c_{\kappa}l\rfloor}|\widehat{\sigma}(k) - \sigma(k)|}_{(2)}
	\end{equation}
	First of all, for (2), by Lemma \ref{lm:34}, $\{Z_t\}$ is (C2) with $A>1$ and therefore satisfies (C0). Then by Theorem\ref{th:37}, with $l = o(n^{\frac{p-2}{p}})$,  $(2) \overset{P}{\rightarrow} 0$.\\
	For (1), 
	let $\tilde{Z}_t = \tilde{\Phi}^{-1}(U_t)$ and $\tilde{\widehat{Z}}_t =  \tilde{\Phi}^{-1}(\widehat{U}_t)$, where $\tilde{\Phi}$ is defined in Section 2. Then $\tilde{\Phi}^{-1}$ is a  function bounded by $c$ and $-c$.  \\
	 Now consider:
	\begin{equation}
	\begin{split}
	\sum_{k=0}^{\lfloor c_{\kappa}l\rfloor}|\wwidehat{\sigma}_Z(k) - \widehat{\sigma}_Z(k)| &= \sum_{k=0}^{\lfloor c_\kappa l \rfloor} \frac{1}{n}\lvert\sum_{t = 1}^{n-k} \left(Z_tZ_{t+k} - \tildehat{Z_t}\tildehat{Z}_{t+k}\right)\rvert\\
	& = \sum_{k=0}^{\lfloor c_\kappa l \rfloor} \frac{1}{n}|\sum_{t = 1}^{n-k} \left(Z_tZ_{t+k} - \Tilde{Z}_t\Tilde{Z}_{t+k} + \Tilde{Z}_t\Tilde{Z}_{t+k}  - \tildehat{Z_t}\tildehat{Z}_{t+k}\right)|\\
	&\leq \sum_{k=0}^{\lfloor c_\kappa l \rfloor}|\frac{1}{n}\sum_{t = 1}^{n-k} \left(Z_tZ_{t+k} - \Tilde{Z}_t\Tilde{Z}_{t+k}\right)|\quad\left(T_1\right) \\&+ \sum_{k=0}^{\lfloor c_\kappa l \rfloor}|\frac{1}{n} \sum_{t = 1}^{n-k} \left(\Tilde{Z}_t\Tilde{Z}_{t+k}  - \tildehat{Z_t}\tildehat{Z}_{t+k}\right) |\quad\left(T_2\right)
	\end{split}
	\end{equation}
	Let $X\lesssim Y$ denote $\exists c>0, X\leq cY$.
	
	For $T_2$,
\begin{equation}
\begin{split}
T_2& = \sum_{k=0}^{\lfloor c_\kappa l \rfloor} |\frac{1}{n} \sum_{t = 1}^{n-k} \Tilde{Z}_t\Tilde{Z}_{t+k} - \Tilde{Z}_t\tildehat{Z}_{t+k} + \Tilde{Z}_t\tildehat{Z}_{t+k} - \tildehat{Z_t}\tildehat{Z}_{t+k}  |\\
& = \sum_{k=0}^{\lfloor c_\kappa l \rfloor}|\frac{1}{n}\sum_{t = 1}^{n-k} \Tilde{Z}_t\left(\Tilde{Z}_{t+k} - \tildehat{Z}_{t+k} \right) + \tildehat{Z}_{t+k}\left(\Tilde{Z}_t - \tildehat{Z_t}\right) |\\
& \lesssim \sum_{k=0}^{\lfloor c_\kappa l \rfloor} \frac{c}{\phi(c)}\sup_{t\in [n]} |U_t - \widehat{U}_t|\\
& = O_p( lce^{\frac{c^2}{2}} n^{-1/2}). 
\end{split}
\end{equation}

Choose $c(n)\rightarrow\infty$ such that $lce^{\frac{c^2}{2}} = o(\sqrt{n})$ makes $T_2\rightarrow 0$ in probability.

Meanwhile,
\begin{equation}
\begin{split}
ET_1 & \leq \sum_{k=0}^{\lfloor c_\kappa l \rfloor} \frac{1}{n}E| \sum_{t = 1}^{n-k} Z_tZ_{t+k} - \Tilde{Z}_t\Tilde{Z}_{t+k}| \\
& = \sum_{k=0}^{\lfloor c_\kappa l \rfloor} \frac{n-k}{n} E|Z_tZ_{t+k} - Z_t\tilde{Z}_{t+k} +Z_t\tilde{Z}_{t+k} -  \Tilde{Z}_t\Tilde{Z}_{t+k}|\\
&\lesssim l E|Z_t(Z_{t+k} - \tilde{Z}_{t+k})|\\
&\leq l \norm{Z_t}_2\norm{Z_{t+k}-\tilde{Z}_{t+k}}_2\\
&= O(l(ce^{-c^2/2})^{1/2})
\end{split}
\end{equation}
with specific calculations for $E(Z_{t+k}-\tilde{Z}_{t+k})^2$ using $Z_{t+k}\sim \mathcal{N}(0,1)$.
Thus by choosing $l,c \rightarrow\infty$ such that $lce^{c^2/2} = o(\sqrt{n})$ and $lc^{1/2}e^{-c^2/4} = o(1)$ we have both $T_1\overset{P}{\rightarrow} 0$ and $T_2\overset{P}{\rightarrow} 0$.

Equation (\ref{eq:414}) is a direct consequence of equation (\ref{eq:413}). Note that both $\wwidehat{\Sigma}_n$ and $\Sigma_n$ are Toeplitz, then:

\begin{equation}
	\begin{split}
	\norm{\wwidehat{\Sigma}_n - \Sigma_n}_{op}& \leq 	\sqrt{\norm{\wwidehat{\Sigma}_n - \Sigma_n}_{1}	\norm{\wwidehat{\Sigma}_n - \Sigma_n}_{\infty}} =
	\sum_{k\in \mathbb{Z}} |\wwidehat{\sigma}(k) - \sigma(k)| \\
	& = \sum_{|k|\leq \lfloor c_\kappa l\rfloor} |\wwidehat{\sigma}(k) - \sigma(k)| + \sum_{|k|>\lfloor c_\kappa l\rfloor}| \sigma(k)|
	\end{split}
\end{equation}
First term is shown to converge to 0 in probability. For second term, 
note that $\sigma(k)$ is absolutely summable by (C2) condition on $Z_t$, then with $l(n)\rightarrow\infty$, second term converge to 0. Since the spectral density of $W_t$ is both bounded and bounded away from 0, with $Z_t$ a linear transform from $W_t$, same holds for $Z_t$. Then by Theorem 2 of \cite{mcmurrypolitis2010}, $\norm{\wwidehat{\Sigma}_n^{-1} - \Sigma_n^{-1}}_{op}\overset{P}{\rightarrow} 0$.
\end{proof}

\quad

\begin{proof}[Proof of Lemma \ref{lm:45}]
Before proving the assertion, some preliminary lemmas are required and listed below:
\begin{lemma}\label{lm:71}
	Given that $\widehat{F}$ is uniformly consistent for $F_Y$ and $F_Y$ is continous and strictly increasing, then with,$\forall p \in (0,1)$, $\widehat{F}^{-1}(p) \overset{P}{\rightarrow} F_Y^{-1}(p)$.
\end{lemma}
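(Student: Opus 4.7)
The plan is to use the classical argument that uniform consistency of a distribution function estimator transfers to pointwise consistency of the associated quantile function, exploiting the strict monotonicity and continuity of $F_Y$ to sandwich $\widehat{F}^{-1}(p)$ between $F_Y^{-1}(p)-\varepsilon$ and $F_Y^{-1}(p)+\varepsilon$ on a high-probability event.

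First I would fix $\varepsilon>0$ and set $x_0=F_Y^{-1}(p)$. Because $F_Y$ is continuous and strictly increasing, the two quantities
\[
a = p - F_Y(x_0-\varepsilon), \qquad b = F_Y(x_0+\varepsilon) - p
\]
are both strictly positive. I would then set $\delta = \tfrac{1}{2}\min(a,b)>0$ and consider the event
\[
E_n = \bigl\{\sup_{x\in\mathbb{R}} |\widehat{F}(x) - F_Y(x)| < \delta\bigr\},
\]
which by the uniform consistency hypothesis satisfies $P(E_n)\to 1$.

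Next, on $E_n$ I would derive the two-sided bound on $\widehat{F}^{-1}(p)$. The upper bound comes from
\[
\widehat{F}(x_0+\varepsilon) > F_Y(x_0+\varepsilon) - \delta > p,
\]
which, together with the definition $\widehat{F}^{-1}(p)=\inf\{y:\widehat{F}(y)\geq p\}$, forces $\widehat{F}^{-1}(p) \leq x_0+\varepsilon$. The lower bound comes from
\[
\widehat{F}(x_0-\varepsilon) < F_Y(x_0-\varepsilon) + \delta < p,
\]
so that $x_0-\varepsilon$ lies outside $\{y:\widehat{F}(y)\geq p\}$; combined with the right-continuity (or monotonicity) of $\widehat{F}$, this yields $\widehat{F}^{-1}(p) \geq x_0-\varepsilon$. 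Hence on $E_n$ we have $|\widehat{F}^{-1}(p)-F_Y^{-1}(p)|\leq\varepsilon$, and letting $P(E_n)\to 1$ gives the convergence in probability.

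The only subtlety, which I would flag but treat quickly, is the handling of the lower bound when $\widehat{F}$ is the empirical CDF $\bar F$ and so has jumps: one must argue that $\widehat{F}(x_0-\varepsilon) < p$ genuinely implies $\widehat{F}^{-1}(p) \geq x_0-\varepsilon$ via the infimum definition, which follows directly because any $y<x_0-\varepsilon$ also satisfies $\widehat{F}(y)\leq\widehat{F}(x_0-\varepsilon)<p$ by monotonicity. No continuity of $\widehat{F}$ is required; only strict monotonicity and continuity of $F_Y$, which is used to guarantee $a,b>0$ and that $x_0$ is the unique solution of $F_Y(x)=p$.
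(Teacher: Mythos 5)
Your argument is correct: the sandwich bound via $\delta=\tfrac12\min(a,b)$ on the event $\{\sup_x|\widehat F(x)-F_Y(x)|<\delta\}$, using only monotonicity of $\widehat F$ and continuity plus strict monotonicity of $F_Y$, is exactly the classical quantile-consistency argument. The paper does not write this out; it simply cites Lemma 1.2.1 of \cite{Politis1999}, which is the same standard result your proof reproduces, so your self-contained version is a faithful substitute for the paper's citation.
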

\begin{proof}
	See  Lemma 1.2.1 in \cite{Politis1999}
\end{proof}
\begin{lemma}\label{lm:72}
	Let $U\sim Unif(0,1)$. Then $\widehat Y = \widehat F^{-1}(U) \sim \widehat F$ and $Y = F_Y^{-1}(U)\sim F_Y$, With $d_\infty(\widehat F, F_Y) = \sup_x |\widehat{F}(x) - F_Y(x)| \overset{P}{\rightarrow} 0$, $\widehat Y \overset{d}{\rightarrow} Y$ in probability.
\end{lemma}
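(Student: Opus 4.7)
The plan is to verify the two distributional identities first as standard inverse-transform-sampling facts, and then reduce the convergence-in-distribution claim to the uniform CDF convergence that is already given as a hypothesis.

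For the distributional identities, I would use the well-known equivalence
\[
\bigl\{\widehat F^{-1}(u)\leq y\bigr\}\;=\;\bigl\{u\leq \widehat F(y)\bigr\},
\]
which holds for any right-continuous nondecreasing $\widehat F$ with its generalized inverse $\widehat F^{-1}(p)=\inf\{x:\widehat F(x)\geq p\}$. Both the empirical CDF $\bar F$ and the kernel-smoothed CDF $\widehat F_h$ are right-continuous, so this applies. Conditioning on the data $\mathcal{D}=\{Y_1,\ldots,Y_n\}$ (with respect to which $\widehat F$ is measurable) and using $U\sim \mathrm{Unif}(0,1)$ independent of $\mathcal{D}$,
\[
P\!\left(\widehat Y\leq y\mid \mathcal{D}\right)\;=\;P\!\left(U\leq \widehat F(y)\mid \mathcal{D}\right)\;=\;\widehat F(y).
\]
The same argument with $F_Y$ in place of $\widehat F$ gives $Y\sim F_Y$.

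For the convergence statement, the key observation is that the conditional CDF of $\widehat Y$ given the data is exactly $\widehat F$, while the CDF of $Y$ is $F_Y$. Since $F_Y$ is continuous under (A2), every real number is a continuity point of $F_Y$, so convergence in distribution reduces to pointwise convergence of CDFs. The hypothesis $d_\infty(\widehat F,F_Y)=\sup_x|\widehat F(x)-F_Y(x)|\xrightarrow{P}0$ gives uniform convergence in probability, which in particular implies $\widehat F(y)\xrightarrow{P}F_Y(y)$ at every $y\in\mathbb R$. This is precisely the statement $\widehat Y\xrightarrow{d}Y$ in probability, interpreted (as is standard in the bootstrap literature) as pointwise convergence of the conditional CDFs of $\widehat Y$ to the limiting CDF at every continuity point, in probability.

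There is no substantial obstacle here; the lemma is really a bookkeeping result that turns the analytic hypothesis on $\widehat F$ into a distributional statement on the simulated $\widehat Y$. The only mild subtlety is to be careful that the generalized-inverse identity is used with the correct direction of the inequality (which requires only right-continuity and monotonicity of $\widehat F$, not continuity), and to specify that the convergence-in-distribution-in-probability notion is quantified against the randomness of the data. Once these conventions are fixed, the proof amounts to writing out the two lines above.
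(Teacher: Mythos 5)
Your proposal is correct and follows the same route the paper intends: the paper simply cites the probability integral transform result for the two distributional identities and declares the convergence step straightforward, and your write-up is exactly the standard filling-in of those two steps (the Galois inequality for the generalized inverse, then pointwise convergence of the conditional CDF to the continuous $F_Y$). No gaps; you have merely made explicit what the paper leaves to the reader, including the correct care about right-continuity and independence of $U$ from the data.
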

\begin{proof}
	For the first part, see \cite{PITresult}. The second part is straightforward.
\end{proof}

	By Lemma \ref{lm:72} and Theorem \ref{th:44}, also by continuous mapping theorem, (1) in Lemma \ref{lm:45} is straightforward. We focus on proving (2).
	
	Note that for both ecdf and kernel cdf with appropriate bandwidth choice, $\sup_x |\widehat{F}(x) - F_Y(x)| \overset{P}{\rightarrow} 0$ with relative assumptions. Then by continuous mapping theorem, $\forall k \in\mathbb{N}$,
	$\tilde\Phi^{-1}(\widehat{F}_Y(Y_k))$ converges in probability to 
	$Z_k = \Phi^{-1}(F(Y_k))$.  Specifically,  $\forall d\leq n$, 
	$\tildehat{\underline Z}_d \overset{P}{\rightarrow} \underline Z_d$. Then with assumption (A9)
	\begin{equation}
	\widehat{\underline\xi}_d\overset{P}{\rightarrow} \underline \xi_d\sim \mathcal{N}(0,I_d)
	\end{equation}
	
 Thus  by Lemma 3.4 of \cite{Kallenberg1997}, sequence $\widehat \xi = (\widehat{ \xi}_1,\cdots)$ converges in probability to $\xi = (\xi_1,\cdots)$ with repect to the metric $\rho(\widehat\xi,\xi) = \sum_{k = 1}^\infty 2^{-k}|\widehat\xi_k - \xi_k|$, where  $\xi_t\overset{i.i.d.}{\sim} \mathcal{N}(0,1)$. Let $\bar F_{\widehat\xi}$, $\bar F_{\xi}$ be the empirical CDF of $\widehat \xi$ and $\xi$ respectively, i.e. $\bar F_{\xi} (x) = \frac{\sum_{k=1}^n I(\xi_k\leq x)}{n}$ and $L_{n,x}(\xi) =\bar F_{\xi}(x)$, $L_{x}(\xi) =\lim_{n\rightarrow\infty}L_{n,x}(\xi)$.  By using continous mapping theorem(Theorem 18.11, \cite{vaart_1998}) on $L_{n,x}(\cdot)$ and $L_{x}(\cdot)$, we have for $x$ a.e., $\bar F_{\widehat\xi}(x) - \bar F_{\xi}(x)\overset{P}{\rightarrow} 0$. Since $\sup_{x}|{\bar F_{\xi}(x) - \Phi(x)|}\rightarrow 0, a.s.$, along with continuity of $\Phi$ and by P\'olya's theorem,  $\sup_x|\bar F_{\widehat \xi}(x) - \Phi(x)|\overset{P}{\rightarrow} 0$. 
 Since for any finite dimensional vector $(\xi_{t_1}^*,\cdots,\xi_{t_d}^*)$, each of the elements are i.i.d. sampled from $\bar F_{\widehat\xi}$, then $(\xi_{t_1}^*,\cdots,\xi_{t_d}^*)$ converge in distribution(in bootstrap world) to a d-vector with i.i.d. standard normals in probability. By Theorem 3.29 of \cite{Kallenberg1997}, assertion in (\ref{eq:418}) holds.The remaining proof goes back to the setting in (1).
\end{proof}
\quad
\begin{proof}[Proof of Theorem \ref{th:46}]
	The proof is based on several key components that can be checked easily:
	\begin{enumerate}
		\item  Finite dimensional convergence in distribution shown in Lemma \ref{lm:45}.
		\item The bootstrap sample $Y_t^*$ are $\lfloor c_\kappa l\rfloor $-dependent with respect to $P^*$ due to $i.i.d.$ sampling of the $\xi_t^*$s and bandedness of $\wwidehat{\Sigma}_n$. 
		Also $\exists \delta>0, \forall t, E^*(Y_t^*)^{2 + \delta} = O_p(1)$ from $L^p -m$ approximable assumption of $Y_t$(with $p>2$) which implies uniform integrability of $(Y_t^*)^2$ (with respect to $n$). Also (C2) holds for $Y_t^*$ in probability.
		\item  Central limit theorem holds by Lemma \ref{th:41} for $\bar Y_n^*$ in probability.
	\end{enumerate}
	The consequences of 1 \& 2 above are two fold. One, the long run variance of $Y_t^*$ can be 
	written as  
	$(\sigma_\infty^*)^2 = \sum_{k = -\lfloor c_\kappa l\rfloor}^{\lfloor c_\kappa l\rfloor} \gamma_Y^*(k)$ which is absolutely convergent in probability as $n\rightarrow\infty$ by Lemma \ref{th:41}. Second, by similar arguments to the proof of Theorem 3.3 in \cite{buhlmann1997},
	we have  $\forall M>0$,  $\sum_{k= -M}^M |\gamma_Y^*(k) -  \gamma_Y(k)| = o_p(1)$. Note that $\sum_{k\in \mathbb{Z}}\gamma_Y(k)$ is also absolutely convergent. All together, they imply $\sum_{k = -\lfloor c_\kappa l\rfloor}^{\lfloor c_\kappa l\rfloor} |\gamma_Y^*(k) - \gamma_Y(k)| = o_p(1)$, and as a consequence, $(\sigma_\infty^*)^2 \overset{P}{\rightarrow}\sigma_\infty^2$.  
	
	The remaining arguments are standard.
	
\end{proof}

\quad

\begin{proof}[Proof of Theorem \ref{th:49}]
	The kernel smoothed spectral density estimator
	\begin{equation}
				\widehat f_{sp.d}(\omega) = \sum_{j\leq  n} \tilde\kappa_h(\omega - \omega_j) I_n(\omega_j)
	\end{equation}
	can be rewritten as the lag-window spectral density estimator with asymptotically negligible error (even after $\sqrt{nh}$- scaling), i.e.:
	\begin{equation}
		\widehat f_{sp.d}(\omega) = \frac{1}{2\pi}\sum_{k\in\mathbb{Z}} K_h(k)\widehat{ \gamma}(k)e^{-i\omega k} + O\left(\frac{1}{n}\right).
	\end{equation}
	with $\widehat{ \gamma}(k) = \frac{1}{n}\sum_{t=1}^n (Y_t - \bar Y) (Y_{t+k} - \bar Y)$ and $K_h(k) = \sum_{j=1}^n \tilde\kappa_h(\omega - \omega_j)e^{i(\omega_j-\omega)k}$.
	See \cite{politis1992}.
	Under assumption (A12) and Lemma \ref{lm:33}, also with previous results holding, $\{Y_t\}$ and $\{Y_t^*\}$ satisfy (C0) with $p=4$; with additional assumptions (A13) and (A14) holding, conditions  in Theorem 2, \cite{liu_wu_2010} are satisfied. Therefore asymptotic normality in the sense of equation (\ref{eq:4226}) holds for both $\widehat{f}_{sp.d}(\omega)$ and $\widehat{f}_{sp.d}^*(\omega)$ with limiting variance $\sigma_\omega^2 = (\eta_\omega + 1) f^2(\omega)\int \tilde\kappa^2(u) du$ and $(\sigma_\omega^{*})^2 = (\eta_\omega + 1) (f^*(\omega))^2\int \tilde\kappa^2(u) du$ respectively. Since $|f_{sp.d}(\omega) - f_{sp.d}^*(\omega)|  \leq \frac{1}{2\pi} \sum_{k\in\mathbb{Z}} |(\gamma_Y(k) - \gamma_Y^*(k))e^{i\omega k}|\leq \sum_{k\in\mathbb{Z}} |\gamma_Y(k) - \gamma_Y^*(k)| \overset{P}{\rightarrow} 0$ and both $f_{sp.d}(\omega)$ and $f_{sp.d}^*(\omega)$ are bounded and bounded a.s. , $|f_{sp.d}^2(\omega) - (f_{sp.d}^*(\omega))^2| \overset{P}{\rightarrow} 0$, which implies $(\sigma_\omega^*)^2\overset{P}{\rightarrow}\sigma_\omega^2$.
\end{proof}

\quad

\begin{proof}[Proof of Theorem \ref{th:51}]
We prove it for $\widehat F = \bar F$. The case for the kernel estimator $\widehat F_h$ is similar with relative assumptions assumed. We mainly show that the distribution of bootstrap predictive root converges in probability to the true distribution in a conditional sense. i.e., $Y_{n+1}^* - \widehat Y_{n+1}^* \overset{d^*}{\rightarrow} Y_{n+1} - \widehat Y_{n+1}$ in probability, conditioning on past values $\underline Y_n$.

 Firstly, conditioning on $\underbar Y_n$, $Y_{n+1}^* \overset{d^*}{\rightarrow} Y_{n+1}$ in probability:\\
\begin{equation}
\begin{split}
Y_{n+1}^* = \Bar F_Y^{-1}(\Phi(Z_{n+1}^*)) & = \left(\Bar F_Y^{-1}(\Phi(Z_{n+1}^*)) -  F_Y^{-1}(\Phi(Z_{n+1}^*))\right) 
+ F_Y^{-1}(\Phi(Z_{n+1}^{*})) 
\end{split}
\end{equation}
We show that first term converges to 0 in probability and the distribution of second term converges to that of $Y_{n+1}$, in probability.
Let $U = \Phi(Z_{n+1}^*)$. Then $\Bar{F}_{Y}^{-1}(U) = Y_{(i)}$, where $\frac{i-1}{n}<U\leq\frac{i}{n}$. It is equivalent to show $Y_{(i)} - F_Y^{-1}(U)\underset{P}{\rightarrow} 0$. Consider $F_Y(Y_{(i)}) - U = \left(F_Y(Y_{(i)}) - \Bar F_Y(Y_{(i)})\right) +\left( \Bar F_Y(Y_{(i)})- U\right)$. The first term is $O_p(\frac{1}{\sqrt{n}})$ by Lemma \ref{th:41}. For the second term, 
$\Bar F_Y(Y_{(i)})- U = \frac{i}{n} - U$, where $U\in(\frac{i-1}{n},\frac{i}{n}]$. Hence the second term goes to 0. Therefore $F_Y(Y_{(i)}) - U \rightarrow 0$ in probability. Since $F^{-1}_Y$ is continuous, $Y_{(i)} - F_Y^{-1}(U)\overset{P}{\rightarrow} 0$.

To show $F_Y^{-1}(\Phi(Z_{n+1}^*)) \overset{d^*}{\rightarrow} Y_{n+1} = F_Y^{-1}(\Phi(Z_{n+1}))$,  we need to show conditioning on $\underbar Y_n$, $Z_{n+1}^*\overset{d^*}{\rightarrow} Z_{n+1}$ in probability. For MF bootstrap, let $n(\cdot,\cdot): (\underline Y_n,\xi_{n+1}^*)\rightarrow Z_{n+1}^*$ with its theoretical analogue $L(\cdot,\cdot):  (\underline Y_n,\xi_{n+1})\rightarrow Z_{n+1}$. The formula of $L_n(\cdot,\cdot)$ is estimated from $\underline Y_n$ and therefore depends on the past values $\underline Y_n$; While $L(\cdot,\cdot)$ is the theoretical data-generating mechanism that links the past values $\underline Y_n$ and innovation $\xi_{n+1}$ so it does not depend on $\underline Y_n$.
By results in Lemma \ref{lm:45} and assumption (A9), $L_n(x,y)$ converges to $L(x,y)$ in probability; Also the distribution of $\xi_{n+1}^*$ converges to $\Phi$ for MF bootstrap. As a result of continuous mapping theorem, $L_n(\underline Y_n,\xi_{n+1}^*)\overset{d^*}{\rightarrow} L(\underline Y_n,\xi_{n+1})$ in probability.

For LMF bootstrap, we only need to show the normal parameters in step 2 of \textbf{Algorithm 3} converge to their theoretical analogue, in probability. As an example, we show $\wwidehat\Sigma_{21}\wwidehat\Sigma_{11}^{-1}\tilde{\widehat{\underbar Z}}_n \rightarrow \Sigma_{21}\Sigma_{11}^{-1}\underbar Z_n$ in probability. Let $c_n = \Sigma_{21}\Sigma_{11}^{-1}$\& $\wwidehat c_n = \wwidehat\Sigma_{21}\wwidehat\Sigma_{11}^{-1}$. we have
\begin{equation}
\begin{split}
|\wwidehat{c}_n\tildehat{\underbar Z}_n - c_n\underbar Z_n| &\leq \sum_{i=1}^n |\wwidehat c_{n,i}\tildehat Z_i - c_{n,i}Z_i|\\
&\leq \sum_{i=1}^n |\wwidehat c_{n,i}||\tildehat Z_i - Z_i| + |\sum_{i=1}^n \left(c_{n,i} - \wwidehat c_{n,i}\right)Z_i|\\
\end{split}
\end{equation}
The first term is bounded by $ \left(\sum_{i=1}^n |\wwidehat c_{n,i}|\right)\sup_{i}|\tildehat Z_i - Z_i|$, which is $o_p(1)$ by continuous mapping theorem and $\sum_{i=1}^n |\wwidehat c_{n,i}|$ is bounded in probability for large $n$. The second term has $\mathcal{N}(0,(c_n - \wwidehat c_n)\Sigma_n(c_n - \wwidehat c_n)^T)$ distribution, where the variance is bounded by $\lambda_{\Sigma_n}^{max}\norm{c_n - \wwidehat c_n}_2^2$. Thus if $\norm{c_n - \wwidehat c_n}_2 \underset{P}{\rightarrow}  0$, then second term converge to 0 in probability.
\begin{equation}
\norm{c_n - \wwidehat c_n}_2\leq \norm{\wwidehat \Sigma_n^{-1} - \Sigma_n^{-1} }_{op} \norm{\wwidehat \Sigma_{21}}_2 + \norm{\wwidehat \Sigma_{21} - \Sigma_{21}}_2\norm{\Sigma_{n}^{-1}}_{op}
\end{equation}   
Since
$\norm{\wwidehat\Sigma_n^{-1} - \Sigma_n^{-1}}_{op}\underset{P}{\rightarrow} 0$,
by previous results, $\norm{\wwidehat \Sigma_{21} - \Sigma_{21}}_2\underset{P}{\rightarrow}0$. Then $\norm{c_n - \wwidehat c_n}_2\underset{P}{\rightarrow} 0$.
The proof for consistency of the second parameter follows a  similar fashion. 

What remains to be shown is conditioning on $\underline Y_n$,
\begin{equation}
		\widehat Y_{n+1}^* \overset{P^*}{\rightarrow} \widehat Y_{n+1}.
\end{equation}
(In probability),
where $\widehat Y_{n+1}$ is the self-chosen predictor. In $L^2$ optimal sense, $\widehat Y_{n+1} = E(Y_{n+1}|\underline Y_n) = E(F_{Y}^{-1}(\Phi(Z_{n+1}))|\underline Y_n)$. Therefore it is sufficient to show that:
\begin{equation}\label{eq:728}
E^*((\Bar{F}^*)^{-1}(\Phi(Z_{n+1}))|\underline Y_n) \overset{P^*}{\rightarrow} E(F_{Y}^{-1}(\Phi(Z_{n+1}))|\underline Y_n)
\end{equation}
Where the expectation is taken with respect to $(Z_{n+1}|\underline Y_n)^*$ and $(Z_{n+1}|\underline Y_{n})$  as defined in \textbf{Algorithm} 2 and 3.
Since directly calculating the expectation in our setup is impossible, we use the same approach as previously used in the proof of Theorem \ref{th:46} by showing:
\begin{enumerate}
	\item $(Z_{n+1}|\underline Y_n)^* \overset{d^*}{\rightarrow} (Z_{n+1}|\underline Y_{n})$.
	\item Consistency of $(\bar F^*)^{-1}$ to $F_Y^{-1}$.
	\item Uniform integrability of $(\Bar{F}^*)^{-1}(\Phi(Z_{n+1})$ and $F_{Y}^{-1}(\Phi(Z_{n+1}))$ conditioning on $\underline Y_n$.
\end{enumerate}
The second
First of all, we need to show that $(Z_{n+1}|\underline Y_n)^*\overset{d^*}{\rightarrow} Z_{n+1}|\underline Y_n$ in probability. In LMF bootstrap setup as in \textbf{Algorithm} 3, this can be checked by showing convergence of the bootstraped autocovariance matrix $\wwidehat{\Sigma}_n^*$ to $\Sigma_n$ in probability. Since $\{Y_t^*\}_{t=1}^n$ are m-dependent and have finite $p^{th}$ moment in probability, Theorem \ref{th:37} applies to $\{Y_t^*\}_{t=1}^n$ and thus
\begin{equation}
		\norm{\wwidehat{\Sigma}_n^* - \Sigma_n^*}_{op}\overset{P^*}{\rightarrow} 0
\end{equation}
where $ \Sigma_n^*$ is the autocovariance matrix of the bootstrap samples $\{Y_t^*\}$. Also by Lemma \ref{lm:45} and uniform integrability of $(Y_t^*)^2$, We have 
$\sum_{k\in\mathbb{Z}} |\sigma^*(k) - \sigma(k)|\overset{P^*}{\rightarrow} 0$. This implies 
	$\norm{{\Sigma}_n^* - \Sigma_n}_{op}\overset{P}{\rightarrow} 0$. Therefore by triangle inequality, 
	\begin{equation}\label{eq:723}
		\norm{\wwidehat{\Sigma}_n^* - \Sigma_n}_{op}\overset{P^*}{\rightarrow} 0.
	\end{equation}

In MF bootstrap setup as in \textbf{Algorithm} 2, we need to show $\forall z\in\mathbb{R}$,
$(\widehat{F}_Z^{(n+1)})^*(z)\overset{P^*}{\rightarrow} \widehat{F}_Z^{(n+1)}(z)$. This can also be shown through equation (\ref{eq:417}) in Lemma \ref{lm:45} and a slightly stronger result than equation (\ref{eq:723}) above, i.e.: we assume $(\log n)^2\norm{\wwidehat{\Sigma}_n^* - \Sigma_n}_{op}\overset{P^*}{\rightarrow} 0.$

Since $\bar F_Y^*$ is the empirical CDF of $\{Y_t^*\}$ for which the CDF $F^*_Y$ satisfies $\sup_{y\in\mathbb{R}}|F^*_Y(y) - F_Y(y)|\overset{P}{\rightarrow} 0$ by Lemma \ref{lm:45}, also along with $m-$dependence of $\{Y_t^*\}$,  
$\sup_{y\in\mathbb{R}} |\bar F_Y^*(y) - F^*_Y(y)|\rightarrow 0$
 almost surely with respect to $P^*$. Combining these two we have $\sup_{y\in\mathbb{R}}|\bar F_Y^*(y) - F_Y(y)|\rightarrow 0$ in probability. By Lemma \ref{lm:71} and continous mapping theorem used in Lemma \ref{lm:45}, $(\Bar{F}_{Y}^*)^{-1}(\Phi(Z_{n+1}))$ with 
$Z_{n+1}$ having the distribution of $(Z_{n+1}|\underline Y_n)^*$ converges in distribution to  $F_{Y}^{-1}(\Phi(Z_{n+1}))$ with $Z_{n+1}$ following the conditional distribution of $Z_{n+1}|\underline Y_n$, in probability. Also by finite $p$th moment of $Y_t$ we have uniform integrability of $(\Bar{F}_{Y}^*)^{-1}(\Phi(Z_{n+1}))$(in probability) and $F_{Y}^{-1}(\Phi(Z_{n+1}))$. Thus equation (\ref{eq:728}) holds. 
For the $L^1$ optimal predictor mentioned in Section 1, proving $\widehat Y_{n+1}^*\overset{P^*}{\rightarrow} \widehat Y_{n+1}$ follows in a similar fashion but may require additional assumptions; for example, continuity and strict monotonicity of the conditional distribution $Y_{n+1}|\underline Y_n$ is necessary to ensure consistency of $\widehat Y_{n+1}$, which is the conditional sample median. 

Summing up the previous results and by Slutsky's theorem, the bootstrap predictive root converges in distribution to the true predictive root(in probability). Since the distribution of the true predictive root is continuous,  we have consistency of the bootstrap quantiles.
\end{proof}

\end{document}